\definecolor{linkcol}{rgb}{0,0,0.6}
\definecolor{hrefcol}{rgb}{0,0.5,0.5}
\definecolor{citecol}{rgb}{0.6,0,0.6}
\definecolor{amber}{rgb}{0.5,0.4,0.0}
\numberwithin{equation}{section}
\newcommand\numeq{\stepcounter{equation}\tag*{{\sc(\arabic{section}.\arabic{equation})}}}
\renewcommand\th@plain{\slshape}
\newtheoremstyle{result}
  {1.7ex}
  {1.7ex}
  {\slshape}
  {0pt}
  {\scshape}
  {.}
  { }
  {\thmname{#1}\thmnumber{ #2}\thmnote{\if&#3& #3\else { }$-$ #3\fi}}
\newtheoremstyle{unique}
  {1.7ex}
  {1.7ex}
  {}
  {0pt}
  {\scshape}
  {.}
  { }
  {#3}
\newtheoremstyle{other}
  {1.7ex}
  {1.7ex}
  {}
  {0pt}
  {\scshape}
  {.}
  { }
  {\thmname{#1}\thmnumber{ #2}\thmnote{(#3) }}
\theoremstyle{result}
\newtheorem{THM}{Theorem}[section]
\newtheorem{PROP}[THM]{Proposition}
\newtheorem{LEM}[THM]{Lemma}
\newtheorem{COR}[THM]{Corollary}
\theoremstyle{unique}
\newtheorem{UNI}{}
\theoremstyle{other}
\newtheorem{DEF}[THM]{Definition}
\newtheorem{DEFS}[THM]{Definitions}
\newtheorem{NOT}[THM]{Notation}
\newtheorem{CONV}[THM]{Convention}
\newtheorem{RMK}[THM]{Remark}
\newtheorem{RMK!}[THM]{Important Remark}
\newtheorem{EXP}[THM]{Example}
\def\bb{\mathbb}
\def\0{\varnothing}
\def\scr{\mathcal}
\def\<{\langle}
\def\>{\rangle}
\newcommand{\keywd}[1]{{\fontseries{b}\selectfont{\boldmath#1}}}
\title{Revisiting the Cohen-Jones-Segal construction in Morse-Bott theory}
\author{Ciprian M. Bonciocat}
\address{Department of Mathematics, Stanford University, 450 Jane Stanford Way, Building 380,
Stanford, CA 94305-2125, USA.}
\email{ciprianb@stanford.edu}
\begin{document}

\begin{abstract}
    In 1995, Cohen, Jones and Segal proposed a method of upgrading any given Floer homology to a stable homotopy-valued invariant. For a generic pseudo-gradient Morse-Bott flow on a closed smooth manifold $M$, we rigorously construct the alleged stable normal framings, which are an essential ingredient in their construction, and give a rigorous proof that the resulting stable homotopy type recovers $\Sigma^\infty_+ M$. We further show that other systems of compatible stable normal framings recover Thom spectra $M^E$, for all reduced $KO$-theory classes $E$ on $M$. Our paper also includes a construction of the smooth corner structure on compactified moduli spaces of broken flow lines with free endpoint, a formal construction of Piunikhin-Salamon-Schwarz type continuation maps, and a way to relax the stable normal framing condition to orientability in orthogonal spectra.
\end{abstract}
\maketitle

\setcounter{tocdepth}{1}
\tableofcontents

\section{Introduction}

    Floer homotopy theory is the programme which seeks, given any particular Floer homology ${\it HF_*}$, to naturally associate to it a stable homotopy type $\it SF$ called the \emph{Floer homotopy type}, whose singular homology $H_*({\it SF})$ recovers the original ${\it HF}_*$. A very concrete solution in this regard, introduced in \cite{CJS} by Cohen, Jones, and Segal, requires as input the following data:
    \begin{itemize}
        \item[\sc i.] A \emph{flow category}, i.e. compact moduli spaces $\scr M_{i,j}$ of unparameterized Floer trajectories of all dimensions, equipped with smooth corner structures, such that boundary strata encode breaking at intermediary critical points: $\partial^u \scr M_{i,j} \cong \scr M_{i,u} \times \scr M_{u,j}$;
        \item[\sc ii.] A \emph{stable normal framing}, i.e. suitably compatible embeddings of $\scr M_{i,j}$ into Euclidean spaces with corners $\bb E_{i,j}$, endowed with compatible trivializations of their normal bundles $N_{i,j}$. 
    \end{itemize}
    More detailed definitions will be given in {\sc\S\ref{sec:def}}. This method has already been fruitful in the construction of the Khovanov homotopy type \cite{LS}. Other approaches to Floer homotopy theory are known, including Manolescu's use of the Conley index \cite{ManSWF}, Abouzaid and Blumberg's spectrum-valued chains \cite{ABMorava} or their more recent stable $\infty$-categorical approach \cite{AB}, as well as variations on \cite{CJS}, e.g. Large \cite{Large} or C\^ot\'e and Kartal \cite{CoteKartal}. We do not address these other methods here.

    The most natural place to test any particular approach to Floer homotopy theory is Morse theory. Here, the natural candidate for the Floer homotopy type is $M$ itself, or rather the suspension spectrum $\Sigma^\infty_+ M$. To carry out the Cohen-Jones-Segal (CJS) construction in the setting of Morse theory, one needs smooth moduli spaces of broken flow lines, assembling into a flow category, e.g. those constructed in \cite{MorseBott} or \cite{BanHurt} in Morse-Bott generality, as well as stable normal framings thereof. It is the latter piece of data that is quite difficult to obtain rigorously; the CJS paper \cite{CJS} sketches a construction of these stable framings, by embedding the moduli space $\scr M_{i,j}$ of unparameterized flow lines from $p_i$ to $p_j$ into the unit sphere $S_i^-$ inside the negative subspace of the Hessian at $p_i$, and showing that $\scr M_{i,j}$ is coframed there, resulting in a stable trivialization of $T\scr M_{i,j}$. Sadly, this only stably frames the \emph{interior} of the compactified moduli space, as the map $\scr M_{i,j} \to S_i^-$ fails to be an embedding as soon as breaking occurs. The authors of \cite{CJS} address this only in passing, asserting that the issue can be remedied by a ``canonical perturbation.'' The more recent paper of Abouzaid and Blumberg \cite{ABMorava} does construct \emph{tangential} framings on the compactified moduli spaces, but only for non-degenerate critical points, suitable for their setup; translating them into the CJS language seems difficult. Rezchikov \cite{Rez} also gives a rigorous construction of stable framings, but using a functional-analytic method instead, and only for non-degenerate critical points.
    
    Lastly, if the CJS construction is the right approach to Floer homotopy theory, one would expect it to recover the stable homotopy type of $M$, when applied to the flow category gotten via Morse theory. While the original CJS paper \cite{CJS} hints at a proof of this fact, by filtering $M$ according to its sub-level sets, they do not provide a rigorous proof. Abouzaid and Blumberg \cite{ABMorava} construct such an isomorphism in their setup of spectrum-valued chains, and at least their construction of the map is vaguely similar to ours, although their proof that it is an isomorphism differs substantially. In fact, our idea closely resembles the more recent proof offered by C\^ot\'e and Kartal in \cite{CoteKartal} in their slightly different setup. The map is constructed using compactified moduli spaces $\scr W_i$ of unparameterized broken trajectories with free endpoint. To our knowledge, neither \cite{ABMorava} nor \cite{CoteKartal} provide a construction of these spaces as smooth manifolds with corners. In the present paper we also give an account of how to construct this smooth structure, in the generality of pseudo-gradient Morse-Bott flows.

    Given this background, the aim of the present paper is twofold. The first aim is to fill in the missing details we mentioned. In fact, following C\^ot\'e and Kartal \cite[Thm. 3.11]{CoteKartal}, we prove that one can recover every Thom spectrum on $M$, and not just $\Sigma^\infty_+M$. In addition to \cite{CoteKartal}, we further show that Thom spectra are \emph{all} the possible stable homotopy types arising from the CJS construction. Our results can be summarized as follows:
    \begin{UNI}[Main Theorem] \sl On a closed smooth manifold $M$, the following hold:
        \vspace{-0.4em}
        \begin{enumerate}
            \item[\sc i.] Any generic pseudo-gradient Morse-Bott flow $(M,f,\Phi)$ gives rise to a flow category $\scr F_{\rm MB} = \scr F_{\rm MB}(M, f, \Phi)$. {\sc(Thm.\,\ref{thm:sm})}
            \item[\sc ii.] For every reduced $KO$-theory class $[E] \in \widetilde{KO}(M)$, there exist stable normal framings called $E$-twisted framings {\sc(Def.\,\ref{def:can})}, for which the Cohen-Jones-Segal (CJS) construction on $\scr F_{\rm MB}$ recovers the Thom spectrum $M^{E} := \Sigma^{-{\rm rk}\; E} {\rm Th}(E)$. These framings can be chosen canonically up to a contractible parameter-space. {\sc(Thms.\,\ref{thm:fr}\,\&\,\ref{thm:main})}
            \item[\sc iii.] Any stable normal framing of $\scr F_{\rm MB}$ becomes $E$-twisted for some $[E] \in \widetilde{KO}(M)$ after suitable stabilization; therefore, Thom spectra are the only possible stable homotopy types that can arise from the CJS construction on $\scr F_{\rm MB}$. {\sc(Thm.\,\ref{thm:ofr})}
        \end{enumerate}
    \end{UNI}
    Part {\sc i} is well established in the literature \cite{MorseBott, BanHurt}, but we use a different technique, suitable for extending it to the moduli spaces $\scr W_i$ much needed in constructing the isomorphism of part {\sc ii}. Although our construction of the isomorphism resembles that of \cite{CoteKartal}, their setup is quite different, as it is based on a reinterpretation of the CJS method via Atiyah duals, originally due to \cite{Large}. The rigorous construction of the $E$-twisted framings in {\sc Thm.\,\ref{thm:fr}} is undoubtedly our most delicate result, and to our knowledge does not appear anywhere else in this degree of detail (cf. \cite[\S D.]{ABMorava}). Part {\sc iii} also appears to be completely new.

    When applying part {\sc ii} of our main result for $[E] = 0$, the CJS construction recovers $\Sigma^\infty_+ M$, as expected. Of particular interest is also $[E] = -[TM]$, in which case the CJS construction recovers the \emph{Spanier-Whitehead dual} of $M$, by \cite{AtTh}, up to a shift; so that the Floer homotopy type $\it SF$ now satisfies ${\it HF_*} \cong H^{-*}({\it SF})$, instead of $H_*({\it SF})$. Either approach may be desirable, since the cohomological grading convention is widely used in Floer theory.

    The second aim of our exposition is to provide the necessary background for a future paper, treating the Floer homotopy theory of monotone Lagrangians. Since the moduli spaces fail to be manifolds as soon as their dimension reaches $N_L$, the minimal Maslov number, and since even the lower-dimensional moduli spaces are not always frameable, we only get a module over the Postnikov truncation $\tau_{\le (N_L - 2)}R$ for some bordism spectrum $R$. In order to give interesting examples of this, we will need to construct stable-homotopy lifts of the Piunikhin-Salamon-Schwarz (PSS) continuation maps of Albers \cite{Albers} in Lagrangian Floer theory. Hence, in the present paper, we give an account of continuation maps, and their functoriality, taking \cite{AB} for inspiration. We explain the differential topology behind PSS maps with no symplectic geometry involved, and also show how to relax the condition of stable framing to orientability with respect to orthogonal spectra, inspired by Cohen's subsequent paper \cite{Cohen}.

    \begin{RMK}
        In our main result stated above, we have assumed that $M$ is a closed finite-dimensional manifold, for the sake of simplicity. The proofs of {\sc i} and {\sc ii} can be extended easily (at least for $E = 0$) to the case that $M$ is still finite-dimensional but potentially non-compact, with $f$ being proper and bounded below. The only difference is that now one needs to allow flow categories to be unbounded above; the appropriate notion of Cohen-Jones-Segal construction required for this setup is defined by taking a direct limit of CJS constructions for the finite stages. Dealing with general $[E] \in \widetilde{KO}(M)$ is probably feasible, but requires some care in defining virtual bundles over non-compact spaces. If one further allows $f$ to be unbounded below as well, more difficulties arise: for example, the CJS construction would additionally involve a homotopy inverse limit (which in general does not commute with either $\Sigma^\infty$ or $H_*$), or the use of pro-spectra \cite[Appendix]{CJS}; also, some of our inductive arguments would cease to work. We expect the CJS construction to recover some sort of completion of $\Sigma^\infty_+ M$ with respect to the sub-level set filtration induced by $f$.
        
        Finally, one may wonder if the results extend to the case that $M$ is an infinite dimensional Hilbert manifold, with $f$ bounded below and satisfying the Palais-Smale condition \cite{PS}. Our work in {\sc\S\ref{sec:stfr}} of constructing stable normal framings, as well as the construction of the isomorphism in {\sc\S\ref{sec:cjs}} would likely adapt without serious changes. The only part that perhaps requires more subtlety is our construction of the smooth structure on $\scr M_{i,j}$ and $\scr W_i$ in {\sc\S\ref{sec:smooth}}, since that would require us to define blow-ups for infinite-dimensional manifolds. It seems that an infinite-dimensional version of our result would be of some interest in symplectic geometry, e.g. for the free loop-space on a manifold, see Cohen's \cite[\sc\S3]{CohenCotangent}, but in the present paper we limit ourselves to treating only the closed, finite-dimensional case.
    \end{RMK}

    \subsection{Organization and Conventions} In {\sc\S\ref{sec:def}}, we review smooth manifolds with corners, along the lines of \cite{Joyce}, which form the essential building block for running the CJS construction. Our notion of smooth manifold with $I$-corners in {\sc Def.\,\ref{def:corn}} is formulated in a new way, but agrees with the notion of $\<n\>$-manifold used in other Floer homotopy theory papers, e.g. \cite[{\sc\S 5}]{ManSar}, when $I = \{1, \ldots, n\}$. We also introduce the \emph{oriented blow-up} of smooth manifolds with corners, along properly embedded submanifolds, which appears to be related to the notion of ``real blow-up'' popular in geometric analysis, cf. \cite{Melrose} and \cite{KotMel}, and also the ``spherical blow-up'' of \cite{ArKa}. This will be an important tool in constructing the smooth moduli spaces associated to Morse-Bott theory. Finally, we also give the relevant definitions of flow categories, embeddings and stable normal framings thereof, in the more general Morse-Bott-like setup, allowing critical points to become critical manifolds.\pagebreak[3]
    
    Our indexing convention orders the critical sets in \emph{descending}, rather than the more standard ascending order of their $f$-value, cf. \cite{CJS}. This seems more appropriate given the nature of the recursive definitions in {\sc\S\ref{sec:smooth}}, and the direction of the continuation maps in {\sc\S\ref{sec:pss}}.

    In {\sc\S\ref{sec:smooth}}, we undertake the rigorous construction of smooth structures with corners on the various moduli spaces $\scr M_{i,j}$ associated to a generic Morse-Bott flow, by means of a recursive definition involving oriented blow-ups, thereby producing a flow category. Constructing such $\scr M_{i,j}$ has been done before through different techniques, e.g. in \cite{BanHurt} (see also \cite{BurHal}, \cite{Weh} or \cite{Zhou}). However, our construction additionally includes that of moduli spaces $\scr W_i$ of flow lines with free endpoint, which those references do not treat. Abouzaid-Blumberg \cite[\sc\S D]{ABMorava} mention manifolds ``$\scr M(x, M)$'', which are likely the same as our $\scr W_i$ in the Morse case, although they do not explain how to endow them with smooth corner structures; likewise for C\^ot\'e-Kartal's $\scr N(i)$ in their \cite[\sc Def.\,3.15]{CoteKartal}. We also point out that the genericity condition we will employ appears to be weaker than what is usually termed ``Morse-Bott-Smale,'' and it appears to be symmetric with respect to reversing the flow (whereas Morse-Bott-Smale is not), see {\sc Rmk.\,\ref{rmk:sym}}.

    In {\sc\S\ref{sec:stfr}}, we give a rigorous construction of the $E$-twisted stable normal framings necessary to carry out the CJS construction for the flow category associated to a generic Morse-Bott flow. Although the author has tried to use the suggested method in \cite{CJS}, of ``canonically perturbing'' the maps $\scr M_{i,j} \to S_i^-$ to become embeddings (which in fact motivated our use of oriented blow-ups, in the hope for such a ``canonical'' way), the process seems too cumbersome. Instead, we use a completely different approach, more similar to the tangential version produced by Abouzaid and Blumberg \cite{ABMorava}, namely to first frame the $\scr W_i$ in a suitable way, and then obtain the framings of the $\scr M_{i,j}$ as a result. We also treat the question of whether any set of stable normal framings is $E$-twisted for some $E$, in {\sc Thm.\,\ref{thm:ofr}}.

    In {\sc\S\ref{sec:cjs}}, we give an abstract definition of the Cohen-Jones-Segal construction, and also provide a rigorous proof that in the case of Morse-Bott theory, it recovers the Thom spectrum $M^E$ from the $E$-twisted framings. Although for simplicity we only explain the CJS construction for a \emph{finite} flow category (see \cite{CJS} for how to treat the infinite case,) our version is in a way more general than those presented in other places, e.g. \cite{ManSar} and \cite{LS}, in that it does not require one to group all critical points of the same Maslov index together. In fact, in the Morse-Bott setting this would not even make sense. We additionally prove the stable homotopy-invariance of the CJS construction under homotopies of the input data, and show that the cellular chain complex of the CJS construction recovers the abstract Morse complex associated to any classical flow category, i.e. all of whose critical loci are points.

    Finally, in {\sc\S\ref{sec:pss}} and {\sc\S\ref{sec:or}}, we give the necessary technical prerequisites for building PSS-style maps. In {\sc\S\ref{sec:pss}}, we show how to make the CJS construction functorial in a certain sense, construct exact triangles, and explain the abstract differential topology behind upgrading PSS maps to maps in the stable homotopy category. The approach appears to be quite different from that of Cohen \cite{CohenCotangent}. We also point out how Floer homotopy theory actually allows one to considerably simplify the definition of a PSS map, in contrast to ordinary Floer homology. Lastly, {\sc\S\ref{sec:or}} discusses what one should do if stable normal framings do not exist, by instead orienting the normal bundles $N_{i,j}$ relative to orthogonal spectra $R_{i,j}$; the ideas are not new, cf. \cite{Cohen}, but we include them here for the sake of completeness.

    \subsection{Acknowledgements} I would very much like to thank my advisor, Ciprian Manolescu, for guiding me and offering many useful suggestions throughout the writing process, and also to Mohammed Abouzaid for suggesting the key idea in constructing the framings of {\sc\S\ref{sec:stfr}}, namely to characterize them by their extendability over the $\scr W_i$, along the lines of \cite{ABMorava}. I am also very grateful to Laurent C\^ot\'e and Bari\c s Kartal for informing me about their paper \cite{CoteKartal}, and for helping me understand the differences between our approaches. The exact triangle approach taken in {\sc\S\ref{sec:pss}} for continuation maps was inspired by a discussion with Semon Rezchikov. The interpretation of {\sc Ex.\,\ref{exp:twist}} via twisted spectra was brought to my attention by Alice Hedenlund and Trygve Oldervoll. I am also greatly indebted to Stanford University, and to my sponsors for the William R. and Sara Hart Kimball Fellowship.

\section{Preliminaries of differential topology and definitions}\label{sec:def}

    In this section we recall the definitions of smooth manifolds with corners, see \cite{Joyce} (and also the more historical \cite[\sc\S 4]{Jan}, \cite{Douady}); and oriented blow-ups thereof, cf. \cite{ArKa}, which will be used as an integral part of the constructions in {\sc\S\ref{sec:smooth}}. We also recall the notion of a flow category, inspired by \cite{LS}, \cite{ManSar}, generalized to the Morse-Bott setup, together with a few other notions attached to it.

    \begin{DEF}
        Given a set $I$, we define an $I$-stratification on a set $X$ to be a collection of subsets $\{\partial^i X \subseteq X\}_{i \in I}$. Given a subset $J \subset I$, we define the (closed) $\partial^J$-boundary stratum of $X$ as the intersection $\partial^J X := \bigcap_{j \in J} \partial^j X.$ An {$I$-map} $f : X \to Y$ between two $I$-stratified sets is a map satisfying $f^{-1}(\partial^i Y) = \partial^i X$ for all indices $i \in I$.
    \end{DEF}

    \begin{EXP}\label{exp:Istr}
        The space $\bb R_{\ge 0}^I$ of $I$-tuples of non-negative real numbers is endowed with the $I$-stratification for which $\partial^i\bb R_{\ge 0}^I$ consists of the tuples whose $i$-entry is zero. Also, whenever $X$ and $Y$ are $I$ and $J$-stratified respectively, we implicitly understand $X \times Y$ to be stratified by $I \sqcup J$, where $\partial^i(X \times Y) = \partial^i X \times Y$, and $\partial^j(X \times Y) = X \times \partial^j Y$ for all $i \in I, j \in J$. Any space is considered to be $\0$-stratified, unless otherwise stated. If $X$ is $I$-stratified, and $I' \supset I$, we implicitly understand $X$ to be $I'$-stratified with $\partial^i X = \0$ for $i \in I' \setminus I$. If $X$ is $I$-stratified and $Y \subset X$, we implicitly understand $Y$ to be $I$-stratified via $\partial^i Y := Y \cap \partial^i X$.
    \end{EXP}

    \begin{DEF}\label{def:corn}
        An $I$-atlas of dimension $n$ on an $I$-stratified topological space $X$ is defined to be a collection $\{(U_\lambda, V_\lambda, \varphi_\lambda)\}_{\lambda \in \Lambda}$ of triples where
        \begin{itemize}
            \item[\sc i.] $U_\lambda \subset X$ are open subsets covering $X$;
            \item[\sc ii.] $V_\lambda \subset \bb R^{n-|J_\lambda|} \times \bb R_{\ge 0}^{J_\lambda}$ are open subsets, for a finite subset $J_\lambda \subseteq I$ potentially depending on $\lambda \in \Lambda$;
            \item[\sc iii.] $\varphi_\lambda : U_\lambda \overset\sim\to V_\lambda$ an $I$-homeomorphism, i.e. a homeomorphism which is also an $I$-map;  
        \end{itemize}
        such that the transition maps $\varphi_\mu \circ \varphi_\lambda^{-1}$ are smooth, meaning that there is an extension thereof to a smooth function on a neighborhood of the domain in Euclidean space. A \keywd{smooth structure with $I$-corners} on $X$ is a maximal atlas, or an equivalence class of atlases, following the usual definition. Smooth maps are likewise defined in the usual manner.
    \end{DEF}

    \begin{DEF}
        Let $V$ be a vector space. Consider the oriented projective space $\bb P^+(V) := (V - \{0\})/\bb R_{>0}$ defined as the quotient of the action of $\bb R_{>0}$ by multiplication. This carries an $\bb R_{\ge 0}$-bundle called the tautological half-line bundle $\hat V$, defined as the set of pairs $([w], v)$ in $\bb P^+(V) \times V$ satisfying $v = \lambda w$ for some $\lambda \ge 0$. These two notions extend in the obvious way to any vector bundle $E$, i.e. we can talk about the oriented projectivization $\bb P^+(E)$ and the tautological half-line bundle $\hat E$ on it, which will be used as local model for the oriented blow-up. Note that the forgetful map $\pi : \hat E \to E$ given by $\pi([w],v) = v$ induces an isomorphism $\hat E - \{\text{0-sec}\} \cong E - \{\text{0-sec}\}$.
    \end{DEF}

    \begin{DEF}\label{def:bu}
        Let $X$ be a smooth manifold with $I$-corners, and $V \subset X$ be a smooth, \emph{properly} embedded $I$-submanifold, i.e. such that $V \hookrightarrow X$ is an $I$-map transverse to all the lower-dimensional boundary strata of $X$. We define the \keywd{oriented blow-up} ${\rm Bl}^+_V X$ (or $\hat X$ for brevity) of $X$ along $V$, as a set, to be the disjoint union of $X - V$ and the oriented projectivization $\bb P^+(\scr N)$ of the normal bundle $\scr N = TX/TV$. We endow the blow-up with an $I \sqcup \{\beta\}$-stratification: the $\beta$-boundary is defined to be $\bb P^+(\scr N)$, also called the exceptional divisor; for the other $i \in I$, we define $\partial^i\hat X := \pi^{-1}(\partial^i X)$, where $\pi : \hat X \twoheadrightarrow X$ is the \emph{blow-down} which maps $X - V$ via the identity inside $X$, and $\bb P^+(\scr N)$ via the projection onto $V$.
        
        To endow this with the structure of a smooth manifold with $I \sqcup \{\beta\}$-corners, we choose a tubular neighborhood $\phi : U \hookrightarrow X$, where $U$ is some open neighborhood of the zero-section in the normal bundle $\scr N = TX/TV$ over $V$, and present ${\rm Bl}^+_V X$ as the union of $X - V$ and $\hat U := \pi^{-1}(U) \subset \hat{\scr N}$ over the open subset $U^\circ = U - \{\text{0-sec}\}$, under the map $\phi|_{U^\circ}$. (The submanifold $V \subset X$ must be \emph{properly} embedded, in order to have a good notion of tubular neighborhood; see \cite[\sc \S II.5]{Douady}.)
    \end{DEF}

    \begin{PROP}[Blow-up is well-defined]
        Any two choices of tubular neighborhoods $\phi, \phi'$ give equivalent smooth structures with $I \sqcup \{\beta\}$-corners on ${\rm Bl}^+_V X$.
    \end{PROP}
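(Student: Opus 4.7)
The plan is to show that the identity map on the underlying set of ${\rm Bl}^+_V X$ is a smooth $I \sqcup \{\beta\}$-diffeomorphism between the two atlases obtained from $\phi$ and $\phi'$. Outside the exceptional divisor, both atlases restrict to the inherited smooth structure on $X - V$, so the only issue is smooth compatibility along $\bb P^+(\scr N)$.

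First, I would invoke a tubular neighborhood theorem for properly embedded $I$-submanifolds (cf.\ \cite[\sc\S II.5]{Douady}): on the overlap $\phi(U) \cap \phi'(U')$, the composition
$$\psi := \phi'^{-1} \circ \phi : U_0 \overset{\sim}{\longrightarrow} U'_0$$
is a smooth $I$-diffeomorphism between open neighborhoods of the zero-section in $\scr N$, restricting to the identity on $V$. The proposition therefore reduces to showing that $\psi$ lifts through the blow-downs to a smooth $I \sqcup \{\beta\}$-diffeomorphism $\hat\psi : \hat U_0 \overset{\sim}{\to} \hat U'_0$.

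The key technical step is a Hadamard-style argument in local trivializations $\scr N|_W \cong W \times \bb R^k$ of the normal bundle. Writing $\psi(v, w) = (v, \psi_1(v, w))$ with $\psi_1(v, 0) = 0$, Hadamard's lemma yields a smooth matrix-valued function $A$ with
$$\psi_1(v, w) = A(v, w) \cdot w, \qquad A(v, w) := \int_0^1 D_w \psi_1(v, sw)\, ds,$$
and $A(v, 0) = D_w\psi_1|_{(v, 0)}$ is invertible, since $\psi$ is a diffeomorphism fixing $V$. In blow-up coordinates $(v, t, u)$ with $t \in \bb R_{\ge 0}$ and $u$ in a chart of $\bb P^+(\bb R^k)$ (viewed as a unit direction), the blow-down becomes $(v, t, u) \mapsto (v, tu)$, and $\hat\psi$ takes the form
$$(v, t, u) \longmapsto \bigl(v,\; t\,|A(v, tu)\,u|,\; A(v, tu)\,u / |A(v, tu)\,u|\bigr),$$
which is smooth across the exceptional stratum $\{t = 0\}$ precisely because $A(v, 0)$ is a nonsingular linear map, so $|A(v, tu)\,u|$ stays bounded below near $t = 0$. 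By symmetry, $\hat\psi^{-1}$ is also smooth, and both preserve the $\beta$-stratum.

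The main obstacle is the bookkeeping at the exceptional divisor: in blow-up coordinates, $\psi$ has an apparent $0/0$ indeterminacy along $\{t = 0\}$, and smoothness only becomes visible after factoring out the radial parameter via Hadamard. Once the local picture is settled, compatibility across trivializations of $\scr N$ is automatic because $\hat\psi$ is canonically built from $\psi$; one therefore obtains a globally smooth $I \sqcup \{\beta\}$-diffeomorphism between the two smooth structures.
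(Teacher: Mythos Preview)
Your approach is essentially the same as the paper's: both reduce to showing that the transition $\psi$ between tubular neighborhoods lifts smoothly to $\hat\psi$, and both resolve the apparent $0/0$ indeterminacy at $t=0$ by a Taylor/Hadamard factorization of the normal component as a smooth matrix times the fiber coordinate. One small correction: you wrote $\psi(v,w) = (v,\psi_1(v,w))$, but the $V$-component of $\psi$ need not be the identity away from the zero-section---only $\psi(v,0)=(v,0)$ is guaranteed. Writing instead $\psi(v,w)=(\psi_0(v,w),\psi_1(v,w))$ with $\psi_0(v,0)=v$, the first entry of $\hat\psi$ becomes $\psi_0(v,tu)$, which is manifestly smooth; the rest of your argument is unaffected, and the paper handles this the same way via its $\psi^\parallel$ component.
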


    \begin{proof}
        Put differently, the conclusion asserts that the identity map from the blow up to itself is smooth, where the smooth structure on the domain is inherited from a tubular neighborhood $\phi : U \hookrightarrow X$, and the smooth structure on the target is inherited from another tubular neighborhood $\phi' : U' \hookrightarrow X$. This assertion is clear on $X - V$, since the smooth structure there is pulled back from $X$, and its definition does not even mention $\phi, \phi'$. Hence, the non-trivial part consists in showing smoothness of the identity at a point in $\bb P^+(\scr N)$.

        It suffices to show that the transition map $\hat \psi$ on $\hat{\scr N}$ induced by the locally defined diffeomorphism $\psi := \phi^{-1} \circ \phi'$ near the zero-section in $\scr N$ is smooth. Let us use local coordinates $\vec x = (x_1, \ldots, x_n) \in \bb R^n$ for the normal directions, $\vec v$ for the $V$-direction, and write $\vec x = r \hat x$ with $\hat x \in \bb S(\bb R^n) \cong \bb P^+(\bb R^n)$ of unit modulus. Further, let us split $\psi = \psi^\perp + \psi^\parallel$ according to the normal and $V$-components of $\psi$ (on the codomain of $\psi$), so in particular $\psi^\perp|_V = 0$ and $\psi^\parallel|_V = {\rm id}_V$. Now, in $(r, \hat x, \vec v)$-coordinates, $\hat \psi$ by definition is
        \[ \hat \psi(r, \hat x, \vec v) = \begin{cases}
            \left(|\psi^\perp|, \frac{\psi^\perp}{|\psi^\perp|}, \psi^\parallel\right) & \text{if } r > 0, \\
            \left(0, \frac{d\psi^\perp(\hat x)}{|d\psi^\perp(\hat x)|}, \vec v\right) &\text{if } r = 0.
        \end{cases} \]
        (Denominators are non-zero because $\psi$ is a diffeomorphism.) By Taylor's theorem, we write
        \[ \psi^\perp = x_1 F_1 + \cdots + x_n F_n = r (\hat x_1 F_1 + \cdots + \hat x_n F_n), \]
        in virtue of the fact that $\psi^\perp|_V = 0$ where $F_i$ are again smooth functions. With this in mind, both branches of the equation can be rewritten as one united formula \pagebreak[2]
        \[ \hat \psi(r, \hat x, \vec v) = \left(r |\hat x_1 F_1 + \cdots + \hat x_n F_n|, \frac{\hat x_1 F_1 + \cdots + \hat x_n F_n}{|\hat x_1 F_1 + \cdots + \hat x_n F_n|}, \psi^\parallel\right). \]
        This is now visibly smooth, since the absolute value is always evaluated away from 0. 
    \end{proof}

    \begin{PROP}[Fiber-products of manifolds with corners]\label{prop:fib}
        Let $Z$ be a smooth corner-less manifold, and let $X$ and $Y$ be smooth manifolds with $I$ and $J$-corners respectively. If $f : X \to Z$ and $g : Y \to Z$ are smooth maps, such that $f|_{\partial^{I'}} \pitchfork g|_{\partial^{J'}}$ for all $I' \subseteq I, J' \subseteq J$, then $X \times_Z Y$ is naturally a smooth manifold with $(I \sqcup J)$-corners.
    \end{PROP}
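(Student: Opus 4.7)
The plan is to exhibit, around every $(x,y) \in X \times_Z Y$, a local chart of the form required by Def.\,\ref{def:corn}. Write $I'_x := \{i \in I : x \in \partial^i X\}$ and $J'_y := \{j \in J : y \in \partial^j Y\}$, which are finite subsets in view of Def.\,\ref{def:corn}. The expected boundary index set at $(x,y)$ is $I'_x \sqcup J'_y$, and the expected local model is an open subset of $\bb R^N \times \bb R_{\ge 0}^{I'_x \sqcup J'_y}$, where $N := \dim X + \dim Y - \dim Z - |I'_x| - |J'_y|$.

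To produce such a chart, first choose corner charts of $X$ and $Y$ sending $x, y$ to $0$ in open subsets of $\bb R^{n-|I'_x|} \times \bb R_{\ge 0}^{I'_x}$ and $\bb R^{m-|J'_y|} \times \bb R_{\ge 0}^{J'_y}$, and a chart of $Z$ near $f(x) = g(y)$ landing in $\bb R^k$ with $k := \dim Z$. By the extendability clause of Def.\,\ref{def:corn}, $f$ and $g$ extend to smooth maps $\tilde f, \tilde g$ on open neighborhoods of $0$ in the \emph{unrestricted} Euclidean spaces $\bb R^n$ and $\bb R^m$. Setting $\tilde h(u,v) := \tilde f(u) - \tilde g(v)$ and $S := \bb R^{n-|I'_x|} \times \bb R_{\ge 0}^{I'_x} \times \bb R^{m-|J'_y|} \times \bb R_{\ge 0}^{J'_y}$, the fiber product is locally identified with $\tilde h^{-1}(0) \cap S$.

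The main step is a corner-preserving implicit function theorem. The deepest-stratum transversality $f|_{\partial^{I'_x}} \pitchfork g|_{\partial^{J'_y}}$ at $(x,y)$ translates in these coordinates to surjectivity of $d\tilde h|_0$ restricted to the corner-free directions $\bb R^{n-|I'_x|} \oplus \bb R^{m-|J'_y|}$. Pick a $k$-dimensional subspace $W$ of those corner-free directions on which $d\tilde h|_0$ is an isomorphism; the usual implicit function theorem, applied to $\tilde h$ on the ambient $\bb R^n \times \bb R^m$, then yields a smooth diffeomorphism $\Psi$ of a neighborhood of $0$ which fixes every coordinate indexed by $I'_x \sqcup J'_y$ and straightens $\tilde h^{-1}(0)$ onto a coordinate affine subspace complementary to $W$. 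Restricting $\Psi$ to $S$ produces the desired chart onto an open subset of $\bb R^N \times \bb R_{\ge 0}^{I'_x \sqcup J'_y}$. The main technical obstacle is arranging $\Psi$ so that no corner coordinate is solved for; this is precisely what the deepest-stratum transversality allows, and it is the key point where the hypothesis is used. The shallower transversality conditions $f|_{\partial^{I'}} \pitchfork g|_{\partial^{J'}}$ with $I' \subsetneq I'_x$, $J' \subsetneq J'_y$ are automatic at $(x,y)$ itself, but the full family of hypotheses in the statement is needed because the \emph{deepest} stratum varies from point to point of $X \times_Z Y$.

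Finally, it remains to verify that the induced stratification matches Ex.\,\ref{exp:Istr} and that two such charts are smoothly compatible. Because $\Psi$ preserves each coordinate indexed by $I'_x \sqcup J'_y$, the model's $\partial^i$-stratum pulls back on the nose to $(\partial^i X \times_Z Y) \cap U$ for $i \in I$, and analogously for $j \in J$. Smooth compatibility of two charts obtained from different coordinate choices and different slicing subspaces $W$ follows by a routine computation: their transition is the restriction to $S$ of a diffeomorphism of an open subset of $\bb R^n \times \bb R^m$, hence smooth in the sense of Def.\,\ref{def:corn}.
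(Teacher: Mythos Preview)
Your argument is correct. The paper, however, does not give a direct proof at all: it simply cites \cite[\S 6]{Joyce} (together with \cite[Rmk.~3.3]{Joyce}) and moves on. So your approach is genuinely different in that you supply a self-contained proof via corner charts and the implicit function theorem, while the paper outsources the result to Joyce's general theory of transversality and fiber products for manifolds with corners. Your key observation --- that the deepest-stratum transversality $f|_{\partial^{I'_x}} \pitchfork g|_{\partial^{J'_y}}$ lets you choose the slicing subspace $W$ entirely within the corner-free directions, so that the straightening diffeomorphism $\Psi$ leaves every corner coordinate untouched --- is exactly the crux of the matter, and is essentially what Joyce's machinery encodes in greater generality. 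The advantage of your route is that it is elementary and stays within the paper's own definitions; the advantage of citing Joyce is brevity and access to a framework that also handles more elaborate situations (e.g.\ when the target $Z$ itself has corners).
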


    \begin{proof}
        This is a special case of \cite[\sc\S 6]{Joyce}, in light of \cite[\sc Rmk. 3.3]{Joyce}.
    \end{proof}

    \begin{PROP}[Blow-ups and fiber-products]\label{prop:blfib}
        Let $V \subset X$ be an embedding of smooth corner-less manifolds, and $f : Y \to X$ be a smooth map from a smooth manifold $Y$ with $I$-corners, such that all lower strata of $Y$ are transverse $V$. Then, $W := V \times_X Y \subset Y$ is a properly embedded $I$-submanifold, and the map $f : Y \setminus W \to X \setminus V$ extends uniquely to a smooth map of blow-ups ${\rm Bl}^+ f : {\rm Bl}^+_{W} Y \to {\rm Bl}^+_V X$.
    \end{PROP}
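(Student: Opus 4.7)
My plan is to handle the two parts separately, with the bulk of the effort going into the smooth extension across the exceptional divisor.

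For Part 1, apply {\sc Prop.\,\ref{prop:fib}} with $Z := X$ (cornerless), the cornerless inclusion $V \hookrightarrow X$ in one slot, and $f : Y \to X$ in the other. The hypothesis that every restriction $f|_{\partial^{I'} Y}$ is transverse to $V$ is exactly what {\sc Prop.\,\ref{prop:fib}} needs, so $W$ inherits a smooth $I$-corner structure with strata $\partial^i W = W \cap \partial^i Y$. Hence $W \hookrightarrow Y$ is an $I$-map, and transversality to each lower stratum of $Y$ is precisely the condition making $W$ properly embedded in the sense of {\sc Def.\,\ref{def:bu}}.

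For Part 2, I first construct ${\rm Bl}^+ f$ as a set-map. Transversality $f \pitchfork V$ implies $df$ sends $T_y W = df_y^{-1}(T_{f(y)} V)$ into $T_{f(y)} V$, inducing a fiberwise linear map of normal bundles $\scr N_W \to f^* \scr N_V$ over $W$ that is surjective by transversality, and hence an isomorphism since both bundles have rank ${\rm codim}(V, X) = {\rm codim}(W, Y)$. Being $\bb R$-linear on fibers, this isomorphism descends to a map $\bb P^+(\scr N_W) \to \bb P^+(\scr N_V)$ of oriented projectivizations covering $f|_W$, which glued to $f : Y \setminus W \to X \setminus V$ defines ${\rm Bl}^+ f$ set-theoretically. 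Uniqueness of any smooth extension is immediate from density of $Y \setminus W$ in ${\rm Bl}^+_W Y$ together with Hausdorffness of the target.

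The main step is verifying smoothness of ${\rm Bl}^+ f$ near a point of the exceptional divisor. Since $W$ is properly embedded, I can choose coordinates $(y_1, \ldots, y_k, \vec w)$ on $Y$ in which $W = \{\vec y = 0\}$ with the entire corner structure of $Y$ lying in the $\vec w$-slots, and coordinates $(x_1, \ldots, x_k, \vec v)$ on $X$ in which $V = \{\vec x = 0\}$. Writing $f = (F_1, \ldots, F_k, F')$, we have $F_i|_{\vec y = 0} = 0$, so Taylor's theorem produces a smooth factorization $F_i = \sum_j y_j G_{i,j}(\vec y, \vec w)$ with the matrix $(G_{i,j}(\vec 0, \vec w))$ invertible by transversality. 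In blow-up coordinates $\vec y = r \hat y$ with $|\hat y| = 1$, the same united-formula trick as in the proof of well-definedness of the blow-up gives
\[ {\rm Bl}^+ f(r, \hat y, \vec w) = \bigl(r\,|H|,\ H/|H|,\ F'(r \hat y, \vec w)\bigr), \qquad H_i := \sum_j \hat y_j G_{i,j}(r \hat y, \vec w), \]
valid for all $r \ge 0$; invertibility at the origin keeps $|H|$ bounded away from zero near the exceptional locus, so the formula is manifestly smooth. The only real obstacle is arranging the corner-respecting normal coordinates on $Y$ --- once those are in place, the smoothness check collapses to the Taylor-factorization calculation that established well-definedness of the blow-up.
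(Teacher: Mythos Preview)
Your proof is correct. Both you and the paper handle Part~1 identically via {\sc Prop.\,\ref{prop:fib}}, and both define the set-theoretic extension on the exceptional divisor through the induced isomorphism of normal bundles. The difference lies in the smoothness check.

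The paper's argument is to produce tubular neighborhoods of $W \subset Y$ and $V \subset X$ that are \emph{compatible with $f$}, i.e.\ in which $f$ becomes literally the pulled-back normal-bundle map; this is done locally via the implicit function theorem, citing \cite[\sc Prop.\,5.1]{Joyce}. Once such neighborhoods are chosen, ${\rm Bl}^+ f$ in the induced blow-up coordinates is just the tautological half-line bundle map $\widehat{\scr N}_{W\subset Y} \to \widehat{\scr N}_{V\subset X}$, whose smoothness is immediate. Your route avoids straightening $f$: you take arbitrary adapted coordinates in which $W$ and $V$ are coordinate planes, Taylor-factor the transverse components $F_i = \sum_j y_j G_{ij}$, and write down the united formula in $(r,\hat y,\vec w)$-coordinates directly, with invertibility of $(G_{ij})$ at $r=0$ ensuring the denominator stays away from zero.

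Your approach is more computational but entirely self-contained, essentially replaying the well-definedness argument for the blow-up itself with a general $f$ in place of the identity. The paper's approach is cleaner once the compatible tubular neighborhoods are in hand, but it offloads the substantive step to an external reference. Either is a valid proof; yours has the virtue of not needing to import anything beyond what is already in the section.
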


    \begin{proof}
        That $W = V \times_X Y \subset Y$ is a properly embedded submanifold follows from {\sc Prop.\,\ref{prop:fib}}, and some linear algebra. It is clear how to extend the map in question set-theoretically, since the normal bundle of $W \subset Y$ is pulled back from that of $V \subset X$, and hence there is a map relating their projectivizations. To check that this map is smooth, it suffices to pick tubular neighborhoods $N_{W \subset Y} \supset U_W \overset \phi\to Y$ and $\psi : \scr N_{V \subset X} \supset U_V \overset\psi\to X$ which respect the map $f$, and note that the map ${\rm Bl}^+ f$ in the local coordinates induced by these two tubular neighborhoods is simply the induced map $\widehat{\scr N}_{W \subset Y} \to \widehat{\scr N}_{V \subset X}$ on tautological half-line bundles, which is smooth. The existence of tubular neighborhoods $\phi, \psi$ compatible with $f$ can be achieved locally by putting the span $V \hookrightarrow X \leftarrow Y$ in a standard position via the implicit function theorem, see \cite[\sc Prop. 5.1]{Joyce}, and reducing to a linear algebra exercise. See also \cite{ArKa}, proving a similar proposition for their ``spherical blow-up'' without corners.
    \end{proof}

    We now introduce the notions required for the CJS construction of {\sc\S\ref{sec:cjs}}. The following definition is motivated by the limiting behavior of pseudo-gradient flow lines of a generic Morse-Bott flow (see {\sc \S\ref{sec:smooth}}), or a Floer functional on an infinite-dimensional configuration space. We implicitly allow manifolds to have components of different dimensions, so $\dim X$ tacitly denotes a locally constant function on any manifold $X$.

    \begin{DEF}\label{def:flow}
        A \keywd{flow category} $\scr F$ consists of a finite, totally ordered set $I$ of indices, a collection of closed smooth manifolds $\{\scr C_i\}_{i \in I}$ referred to as \emph{critical manifolds}, and a collection of compact smooth manifolds $\scr M_{i,j}$ with $\{i < u < j\}$-corners for all $i < j$ in $I$, equipped with the following extra data. There are smooth \emph{source} and \emph{target} maps $s_{i,j} : \scr M_{i,j} \to \scr C_i$ and $t_{i,j} : \scr M_{i,j} \to \scr C_j$, which satisfy all the transversality conditions (see {\sc Prop.\,\ref{prop:fib}}) needed to ensure that the iterated fiber product
        \[\numeq\label{eq:multifib}
            \scr M_{i_0,i_1} \times_{\scr C_{i_1}} \scr M_{i_1,i_2} \times_{\scr C_{i_2}} \cdots \times_{\scr C_{i_{n-1}}} \scr M_{i_{n-1},i_n},
        \]
        along source and target maps, is a smooth manifold with corners for any sequence of indices $i_0 < i_1 < \cdots < i_n$ in $I$. Lastly, we require identifications of \ref{eq:multifib} with $\partial^{\{i_1, \ldots, i_{n-1}\}}\scr M_{i_0,i_n}$ which are associative in the usual sense (so that one obtains a non-unital category with objects $I$.)
    \end{DEF}

    \begin{RMK}
        Standard linear algebra shows that the transversality condition for \ref{eq:multifib} is independent of the order in which we take the fiber-products, and means that at the tangent space level, all fiber-products have the expected dimension.
    \end{RMK}

    \begin{CONV}\label{conv:ind}
        If all the $\scr C_i$ are points, we say that $\scr F$ is \emph{classical}. For legibility, we write $\times_i$ instead of $\times_{\scr C_i}$ for fiber-products. We also often pull back bundles from $\scr C_i$ or $\scr C_j$ to $\scr M_{i,j}$ without writing $s_{i,j}^*$ or $t_{i,j}^*$.
    \end{CONV}

    \begin{RMK}
        Note that the boundary identifications in the definition of a flow category impose the following condition (at least on components of $\scr M_{i,j}$ having $\partial^u \neq \0$):
        \[ \dim \scr M_{i,j} = 1 + \dim \scr M_{i,u} + \dim \scr M_{u,j} - \dim \scr C_u, \]
        as locally constant functions. Equivalently, the quantity $1 + \dim \scr M_{i,j} - \dim \scr C_i$ is additive under composition. A \emph{Maslov grading}, making this relative quantity absolute, consists of locally constant functions $\mu(i) : \scr C_i \to \bb Z$, $\forall i \in I$, satisfying $1 + \dim \scr M_{i,j} - \dim \scr C_i = \mu(i) - \mu(j)$. A \keywd{graded flow category} is a flow category $\scr F$, equipped with a Maslov grading.
    \end{RMK}

    \begin{DEF}
        Given a finite set $I$, we use the term \emph{Euclidean $I$-space} to denote any space of the form $V \times \bb R_{\ge 0}^I$ for $V$ a vector space, with the $I$-stratification from {\sc Ex.\,\ref{exp:Istr}}.
    \end{DEF}

    \begin{DEF}\label{def:cohemb}
        A \keywd{coherent collection of embeddings} for a flow category $\scr F$ as above, is given by Euclidean $\{i < u < j\}$-spaces $\bb E_{i,j}$ with associative identifications
        \[ \bb E_{i,j} = \bb E_{i,u} \times \bb R_{\ge 0}^{\{u\}} \times \bb E_{u,j}, \]
        together with proper $\{i < u < j\}$-embeddings $\iota_{i,j} : \scr M_{i,j} \hookrightarrow \scr C_i \times \bb E_{i,j}$ whose $\scr C_i$-entry is the source map $s_{i,j} : \scr M_{i,j} \to \scr C_i$, such that the boundary restriction $\iota_{i,j}|_{\partial^u}$ agrees with the map
        \begin{equation*}\numeq\label{eq:fibmap}
            \partial^u\scr M_{i,j} = \scr M_{i,u} \times_u \scr M_{u,j} \to \scr C_i \times \bb E_{i,u} \times \{0\} \times \bb E_{u,j}
        \end{equation*}
        given by the embedding $\iota_{i,u}$ into $\scr C_i \times \bb E_{i,u}$ on the first factor, and by the embedding $\iota_{u,j}$, composed with the projection onto $\bb E_{u,j}$, on the second factor.
    \end{DEF}

    \begin{RMK}\label{rmk:normadd}
        The map \ref{eq:fibmap} is an injection. Indeed, the $\scr C_i \times \bb E_{i,u}$ factor uniquely identifies the $\scr M_{i,u}$-entry, which then specifies the $\scr C_u$-source of the $\scr M_{u,j}$-entry, which together with $\bb E_{u,j}$ uniquely determines the pair completely. The same argument applied at the tangent space level shows that the map \ref{eq:fibmap} is actually a smooth embedding. A modified version of this argument also shows that any two choices of normal bundles $N_{i,u}$ and $N_{u,j}$ of the two embeddings $\iota_{i,u}$ and $\iota_{u,j}$, i.e.
        \[ T\scr M_{i,u} \oplus N_{i,u} \overset\sim\to T\scr C_i \oplus T\bb E_{i,u}, \qquad T\scr M_{u,j} \oplus N_{u,j} \overset\sim\to T\scr C_u \oplus T\bb E_{u,j} \]
        add up to a choice of normal bundle $N_{i,j}|_{\partial^u} \cong N_{i,u} \oplus N_{u,j}$ for $\iota_{i,j}$:
        \[ T(\scr M_{i,u} \underset{u}\times \scr M_{u,j}) \oplus N_{i,u} \oplus N_{u,j} \overset\sim\to T\scr C_i \oplus T\bb E_{i,u} \oplus T\bb E_{u,j}. \]
    \end{RMK}

    \begin{DEF}\label{def:snf}
        Let $\xi_i$ be vector bundles on the critical manifolds $\scr C_i$ of a graded flow category $\scr F$, such that the quantity ${\rm rk\;} \xi_i - \mu(i)$ is constant as a function on $\scr C_i$. We define a \keywd{stable normal framing of $\scr F$ relative to $\xi_i$} to be a coherent collection of embeddings $\iota_{i,j} : \scr M_{i,j} \hookrightarrow \scr C_i \times \bb E_{i,j}$ with normal bundles $N_{i,j}$ as in the remark above, a collection of vector spaces $V_{i,j}$ with associative identifications $V_{i,u} \oplus V_{u,j} = V_{i,j}$, and finally vector bundle isomorphisms $\phi_{i,j} : \xi_i \oplus N_{i,j} \overset\sim\to V_{i,j} \oplus \xi_j$ over $\scr M_{i,j}$ such that the boundary restriction $\phi_{i,j}|_{\partial^u}$ is equal to the composite
        \[\numeq\label{eq:snf} \xi_i \oplus N_{i,j} \cong \xi_i \oplus N_{i,u} \oplus N_{u,j} \overset{\phi_{i,u}}{\longrightarrow} V_{i,u} \oplus \xi_u \oplus N_{u,j} \overset{\phi_{u,j}}{\longrightarrow} V_{i,u} \oplus V_{u,j} \oplus \xi_j \cong V_{i,j} \oplus \xi_j. \]
    \end{DEF}

\section{Morse-Bott moduli spaces and their smooth structures}\label{sec:smooth}

    In this section, we provide examples of flow categories via Morse-Bott theory. Consider a closed smooth manifold $M$, with a \keywd{Morse-Bott function} $f : M \to \bb R$, i.e. a smooth map whose Hessian $H(-,-)$ has locally constant nullity when restricted to the critical locus $\scr C := {\rm Crit}(f)$. This automatically implies that $\scr C$ is a smooth manifold of dimension ${\rm null}(H_p)$ at any point $p \in \scr C$, by the constant-rank theorem.

    Let $r_1 > \cdots > r_n$ denote the critical values of $f$, in decreasing order, and $\scr C_i := \scr C \cap f^{-1}(r_i)$. Note that the various $\scr C_i$ may still have several connected components, of potentially different dimensions. Now, at each point $p \in \scr C_i$, the tangent space $T_pM$ is canonically divided into a direct sum of three subspaces
    \[ T_pM = T_p^- \oplus T_p^0 \oplus T_p^+, \]
    namely the sum of all the eigenspaces corresponding to negative, zero, or positive eigenvalues respectively. These can be assembled into vector bundles $T_i^-, T_i^0, T_i^+$ over $\scr C_i$, since the projection operators onto the three subspaces can be written as continuous functions in the symmetric matrix representing the Hessian, in any given local coordinate system. The bundles $T_i^\pm$ are equipped with canonical Euclidean structures, via $\|x_\pm\| := \pm H(x_\pm, x_\pm)$.

    Notice that $T_i^0 \cong T\scr C_i$, and hence that the normal bundle of the inclusion $\scr C_i \subset M$ is isomorphic to $T_i^- \oplus T_i^+$. Hence, a neighborhood $\Omega_i$ of $\scr C_i$ can be identified with a neighborhood of the zero-section in $T_i^- \oplus T_i^+$. In fact, by the Morse-Bott lemma \cite{MBLem}, we can find local coordinates $(x_-, x_+) \in T_i^- \oplus T_i^+$ on $M$ near $\scr C_i$, such that
    \[\numeq\label{eq:coords}
        f(x_-, x_+) = r_i + \|x_+\|^2 - \|x_-\|^2, \qquad \forall x = (x_-, x_+) \in \Omega_i \subset T_i^- \oplus T_i^+.
    \]
    Let us choose a \keywd{pseudo-gradient} vector field $X$ on $M$, i.e. satisfying the two conditions
    \begin{itemize}
        \item[\sc i.] $df(X)$ is uniformly negative in the complement of all the $\Omega_i$.
        \item[\sc ii.] $X = -\tfrac 12 \nabla f = (x_-, -x_+)$ inside each $\Omega_i$.
    \end{itemize}
    The flow $\Phi_t$ along $X$ decreases $f$ strictly away from $\scr C$, and is the standard Anosov flow
    \begin{equation*}\numeq\label{eq:anosov}
        \Phi_t : (x_-, x_+) \mapsto (e^t x_-, e^{-t} x_+)
    \end{equation*}
    near the critical sets $\scr C_i$.

    \begin{DEFS}\label{def:sd}
        Some important notions immediately associated with the data $(M, f, \Phi)$ are the following:
        \begin{itemize}
            \item[\sc i.] For all $1 \le k < n$, we define the level-set $L_{k,k+1}$ as the preimage $f^{-1}(r)$ of any value $r \in (r_{k+1}, r_k)$. Any two choices of $r$ induce canonically isomorphic manifolds, via the flow $\Phi$. Similarly, we define the {open segment} $U_k := f^{-1}(a, b)$ for any $a \in (r_{k+1}, r_k)$, and any $b \in (r_k, r_{k-1})$, with $r_0 = \infty$ and $r_{n+1} = -\infty$ by convention. These $U_k$ can be arranged so that they cover $M$.
            \item[\sc ii.] There are ascending/descending open {disk} bundles $D_k^+, D_k^- \subset U_k$, defined as the loci of points flowing into/out of $\scr C_k$. Inside $\Omega_i$, they are canonically identified with open subsets of $T_k^\pm$. Similarly, there are {sphere} bundles $S_k^-, S_{k+1}^+ \subset L_{k,k+1}$, defined as the intersections of the formerly defined $D_k^-, D_{k+1}^+$ with $L_{k,k+1}$. They are canonically identified with the projectivizations $\bb P^+(T_k^-), \bb P^+(T_{k+1}^+)$.
        \end{itemize}
    \end{DEFS}

    \begin{DEFS}\label{def:moduli}
        We \emph{set-theoretically} define the following \keywd{compactified moduli spaces} of unparameterized broken flow lines:
        \begin{itemize}
            \item[\sc i.] For all $1 \le i < j \le n$, let $\scr M_{i,j}$ be the set of all potentially broken pseudo-gradient flow lines starting at $p_i$, and ending at $p_j$.
            \item[\sc ii.] For all $1 \le i < j \le n$, let $\scr V_{i,j}$ be the set of all potentially broken pseudo-gradient flow lines starting at $p_i$, and ending at any point in $L_{j-1,j}$.
            \item[\sc iii.] For all $1 \le i \le n$, let $\scr W_{i}$ be the set of all potentially broken pseudo-gradient flow lines starting at $p_i$, and ending at any point in $M$ (constant paths are allowed.)
        \end{itemize}
        We endow $\scr M_{i,j}, \scr V_{i,j}$ with $\{i < u < j\}$-stratifications where $\partial^u$ constitutes those flow lines passing through $p_u$. Similarly, we endow $\scr W_i$ with $\{i < u \le n\}$-stratifications where $\partial^u$ constitutes those flow lines passing through $p_u$ (which includes those ending at $p_u$.)
    \end{DEFS}

    \begin{RMK}
        It is quite clear that $\partial^u \scr M_{i,j} = \scr M_{i,u} \times_{\scr C_u} \scr M_{u,j}$, $\partial^u \scr V_{i,j} = \scr M_{i,u} \times_{\scr C_u} \scr V_{u,j}$, and $\partial^u \scr W_i = \scr M_{i,u} \times_{\scr C_u} \scr W_u$. These identifications are compatible for different $u$'s, in that concatenation of three or more broken flow lines does not depend on the order in which it is done. Following {\sc Conv.\,\ref{conv:ind}}, we write $\times_u$ instead of $\times_{\scr C_u}$.
    \end{RMK}

    These sets are more traditionally denoted with a bar on top, to distinguish them from the uncompactified versions. However, since our construction of these manifolds will produce the compactified moduli spaces directly, we have decided to drop the bar. Before we construct the smooth corner structures in {\sc Thm.\,\ref{thm:sm}}, we need a few preliminary results.

    \begin{RMK}\label{rmk:cof}
        All the disk and sphere bundles in {\sc Def.\,\ref{def:sd}-ii} are canonically co-framed relative to the $T_k^\pm$. That is, $TU_k/TD_k^\pm \cong T_k^\mp$ and consequently there are identifications $TL_{k-1,k}/TS_k^+ \cong T_k^-$ and $TL_{k,k+1}/TS_k^- \cong T_k^+$ over $\scr C_k$ via the target/source map.
    \end{RMK}

    \begin{PROP}[Surgery as blowing up and down]
        Any two adjacent level-sets $L_{k-1,k}$ and $L_{k,k+1}$ have canonically diffeomorphic oriented blow-ups
        \begin{equation*}\numeq\label{eq:samebl}
            {\rm Bl}^+_{S_k^+} L_{k-1,k} \cong {\rm Bl}^+_{S_k^-} L_{k,k+1}.
        \end{equation*}
        This common blow-up parameterizes potentially broken flow lines joining the two level-sets. The boundary is identified with $S_k^+ \times_k S_k^-$, the subspace of actually broken flow lines.
    \end{PROP}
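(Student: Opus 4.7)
The strategy is to work in the Morse-Bott normal coordinates near $\scr C_k$ provided by \ref{eq:coords}, construct explicit charts on both oriented blow-ups via adapted polar-type coordinates, and show that the pseudo-gradient flow, which away from $\scr C_k$ already provides a diffeomorphism $L_{k-1,k} \setminus S_k^+ \overset\sim\to L_{k,k+1} \setminus S_k^-$, extends smoothly and canonically across the exceptional divisors.

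First I would set up the charts. Inside $\Omega_k$, parameterize $L_{k-1,k} \cap \Omega_k$ by tuples $(c, \hat x_-, \hat x_+, r)$ with $c \in \scr C_k$, unit vectors $\hat x_\pm \in T_k^\pm|_c$, and $r \ge 0$, via $x_- = r\hat x_-$ and $x_+ = \sqrt{r^2 + \epsilon^2}\,\hat x_+$, where $\epsilon^2$ is the chosen value of $f - r_k$ on $L_{k-1,k}$. This is precisely a chart for ${\rm Bl}^+_{S_k^+} L_{k-1,k}$ in the sense of {\sc Def.\,\ref{def:bu}}: the collar parameter $r$ is the oriented blow-up coordinate along the normal bundle $T_k^-$ of $S_k^+$ (cf. {\sc Rmk.\,\ref{rmk:cof}}), and the exceptional divisor $\{r = 0\}$ is visibly $S_k^+ \times_k \bb P^+(T_k^-) = S_k^+ \times_k S_k^-$. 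Analogously, $L_{k,k+1} \cap \Omega_k$ admits a chart $(c, \hat x_-, \hat x_+, s)$ via $x_+ = s\hat x_+$ and $x_- = \sqrt{s^2 + \epsilon'^2}\,\hat x_-$, whose exceptional divisor at $s = 0$ is again canonically $S_k^+ \times_k S_k^-$.

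Next I would compute the Anosov flow \ref{eq:anosov} in these coordinates. Since $\Phi_t$ merely rescales $x_\pm$, it fixes $(c, \hat x_-, \hat x_+)$, and the condition that the image lie on $L_{k,k+1}$ yields the quadratic $u^2 r^2 - \epsilon'^2 u - (r^2 + \epsilon^2) = 0$ in $u := e^{2t}$. Solving gives
\[ s^2 \;=\; \tfrac12\bigl(\sqrt{\epsilon'^4 + 4r^2(r^2 + \epsilon^2)}\, - \epsilon'^2\bigr). \]
A Taylor expansion of the square root, valid because its argument is bounded away from $0$, yields $s^2 = r^2\, \phi(r^2)$ with $\phi$ smooth and strictly positive on $[0, r_0)$. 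Hence both $s/r$ and $r/s$ extend smoothly and positively across the boundary, and the flow induces a local diffeomorphism of the two blow-up charts that is the identity on $(c, \hat x_-, \hat x_+)$.

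This local extension agrees on the overlap (where $r, s > 0$) with the manifestly smooth flow diffeomorphism $L_{k-1,k} \setminus S_k^+ \overset\sim\to L_{k,k+1} \setminus S_k^-$ coming from standard ODE theory, so the two glue into a global diffeomorphism, which is canonical because the Morse-Bott normal form depends only on $(f, \Phi)$. The boundary identification is then immediate from either chart: the exceptional divisor is $S_k^+ \times_k S_k^-$, and a pair $(\hat x_+, \hat x_-)$ over $c$ represents the broken flow line entering $\scr C_k$ along the projective direction $\hat x_+$ in $L_{k-1,k}$ and re-emerging along $\hat x_-$ in $L_{k,k+1}$. The main technical obstacle is precisely the regularity check at $r = 0$: one must verify not only that $s^2$ is smooth in $r^2$ (which is easy) but also that the quotient $s/r$ extends smoothly and positively, so that the map is a genuine diffeomorphism of charts in the sense of {\sc Def.\,\ref{def:corn}}; this reduces cleanly to the Taylor expansion displayed above.
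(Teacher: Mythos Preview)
Your proof is correct and follows essentially the same strategy as the paper: both construct blow-up charts via polar coordinates in the Morse--Bott normal form and verify that the Anosov flow extends smoothly across the exceptional divisor. The paper streamlines your computation by normalizing the level values to $f^{-1}(\pm 1)$ and using the \emph{same} radial parameter $t$ on both sides (via $\pi^+(t,\hat x_-,\hat x_+)=(t\hat x_-,\sqrt{1+t^2}\,\hat x_+)$ and $\pi^-(t,\hat x_-,\hat x_+)=(\sqrt{1+t^2}\,\hat x_-,t\hat x_+)$), so that the flow preserves $\|x_-\|\cdot\|x_+\|=t\sqrt{1+t^2}$ and hence is literally the identity in these coordinates, obviating your Taylor-expansion check.
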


    \begin{proof}
        As sets, it is clear that both blow-ups parameterize flow lines: away from the boundary exceptional divisor, the flow $\Phi$ induces a bijection between $L_{k-1,k} \setminus S_k^+$, $L_{k,k+1} \setminus S_k^-$, and the set of \emph{unbroken} flow lines. At the exceptional boundary divisors, the co-framings defined in {\sc Rmk.\,\ref{rmk:cof}} induce trivializations of the oriented projective bundles:
        \[ \bb P^+(TL_{k-1,k}/TS_k^+) \cong \bb P^+(T_k^-) \times_k S_k^+ \cong S_k^- \times_k S_k^+ \cong S_k^- \times_k \bb P^+(T_k^+) \cong \bb P^+(TL_{k,k+1}/TS_k^-). \]
        Putting these two together, we obtain a canonical bijection \ref{eq:samebl} of \emph{sets}. It remains to check that it is smooth in both directions, which we do by picking tubular neighborhoods of $S_k^+ \subset L_{k-1,k}$ and $S_k^- \subset L_{k,k+1}$. To understand the blow-ups, let us assume without loss of generality that the level-sets are cut out sufficiently close to the critical point itself, so as to enter into the standard neighborhood $\Omega_k$ of \ref{eq:coords}, and further assume that the level-sets are given by $f^{-1}(\pm 1)$ (which can be achieved by translation and rescaling of the original $f$.) We define tubular neighborhoods $T_k^- \times_k S_k^+ \to L_{k-1,k}$ and $S_k^- \times_k T_k^+ \to L_{k,k+1}$ by the formulas
        \[\numeq\label{eq:tubn} (x_-, \hat x_+) \mapsto (x_-, \sqrt{1 + \|x_-\|^2} \cdot \hat x_+), \qquad (\hat x_-, x_+) \mapsto (\sqrt{1 + \|x_+\|^2} \cdot \hat x_-, x_+), \]
        in coordinates \ref{eq:coords}. Both blow-ups are canonically diffeomorphic to $\bb R_{\ge 0} \times S_k^- \times_k S_k^+$ with blow-down projection maps to $L_{k-1,k}$ and $L_{k,k+1}$ given by
        \[ \pi^+(t, \hat x_-, \hat x_+) = (t \cdot \hat x_-, \sqrt{1 + t^2} \cdot \hat x_+) \quad {\rm and} \quad \pi^-(t, \hat x_-, \hat x_+) = (\sqrt{1 + t^2} \cdot \hat x_-, t \cdot x_+), \]
        respectively. It is not hard to see that $\pi^- = \Phi \circ \pi^+$, where $\Phi$ is the pseudo-gradient flow map from $L_{k-1,k}$ to $L_{k,k+1}$. Indeed, the flow is standard Anosov \ref{eq:anosov}, and so it keeps the unit-normalized directions of the two entries $\hat x_\pm$ the same, as well as the product of the sizes of these two vectors. This shows that our bijection is in fact the identity in the particular blow-up coordinates given by the tubular neighborhoods \ref{eq:tubn}, and hence smooth.
    \end{proof}

    \begin{PROP}\label{prop:backmap}
        The map $(U_k \setminus D_k^-) \to L_{k-1,k}$ which sends  points to their image under the backwards flow $\Phi^{-1}$ extends to a unique smooth map $\phi : {\rm Bl}^+_{D_k^-} U_k \to L_{k-1,k}$.
    \end{PROP}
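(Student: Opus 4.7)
The plan is to first settle uniqueness, then verify smoothness locally near $\scr C_k$ by explicit computation, and finally propagate smoothness to the rest of the exceptional divisor via flow-invariance combined with the blow-up functoriality of {\sc Prop.\,\ref{prop:blfib}}.

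Uniqueness is immediate from the density of $U_k \setminus D_k^-$ in the blow-up. For existence, I first observe that $\phi$ is flow-invariant: on $U_k \setminus D_k^-$ it sends $q$ to $\Phi_{-t(q)}(q)$ where $t(q)$ is the unique time at which the orbit through $q$ crosses $L_{k-1,k}$ (unique because $f \circ \Phi_t$ is strictly monotone on orbits away from $\scr C$), and therefore $\phi(\Phi_{-T}(q)) = \phi(q)$ whenever both sides make sense. This observation already extends $\phi$ to a smooth map on an open neighborhood of $D_k^-$ in $M$ (not merely inside $U_k$). Given any $p \in D_k^-$, I then pick $T > 0$ large enough that $p' := \Phi_{-T}(p)$ lies inside the standard Morse-Bott neighborhood $\Omega_k$. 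Since the global diffeomorphism $\Phi_{-T}$ preserves $D_k^-$, {\sc Prop.\,\ref{prop:blfib}} lifts it to a local diffeomorphism of blow-ups between neighborhoods of $p$ and $p'$, so smoothness of $\phi$ at $p$ reduces to smoothness at $p'$.

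The local check near $\scr C_k$ is a direct computation. In coordinates $(x_-, x_+) \in T_k^- \oplus T_k^+$ from \ref{eq:coords}, with $D_k^- = \{x_+ = 0\}$ and blow-up coordinates $(x_-, r, \hat x_+)$ on ${\rm Bl}^+_{D_k^-} \Omega_k$ for $r \ge 0$ and $\hat x_+ \in S(T_k^+)$, the Anosov formula \ref{eq:anosov} for $\Phi_t$, combined with the tubular parametrization \ref{eq:tubn} of $L_{k-1,k}$, reduces the condition $\Phi_{-T}(x_-, r\hat x_+) \in L_{k-1,k}$ to the quadratic $\mu^2 - \mu - r^2 \|x_-\|^2 = 0$ for $\mu := \|y_+\|^2$. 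Solving yields the explicit formula
\[ \phi(x_-, r, \hat x_+) = \left(\frac{r\, x_-}{\sqrt{\mu(r, x_-)}},\; \hat x_+\right), \qquad \mu(r, x_-) = \frac{1 + \sqrt{1 + 4 r^2 \|x_-\|^2}}{2}, \]
which is manifestly smooth including at $r = 0$, and sends the exceptional divisor into $S_k^+ \subset L_{k-1,k}$ as one expects geometrically. The main obstacle is not in this routine local calculation but in the global step: the combination of flow-invariance with the blow-up functoriality of {\sc Prop.\,\ref{prop:blfib}} is what cleanly reduces any $p \in D_k^-$ to the local picture near $\scr C_k$, sparing us from long-time estimates on the backward flow.
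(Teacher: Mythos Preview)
Your argument is correct, and the local computation near $\scr C_k$ matches the paper's exactly (your $\mu$ is the paper's $1+t^2$). The genuine difference lies in how you handle points of $D_k^-$ away from $\scr C_k$. The paper splits into two cases: near $\scr C_k$ it does the same explicit calculation you do, while over $D_k^- \setminus \{0\text{-sec}\}$ it observes that $D_k^- \subset U_k$ is locally $\bb R \times (S_k^- \subset L_{k,k+1})$ and then invokes the surgery identification \ref{eq:samebl} to conclude smoothness. You instead use flow-invariance $\phi \circ \Phi_{-T} = \phi$ together with the blow-up functoriality of {\sc Prop.\,\ref{prop:blfib}} (applied to the diffeomorphism $\Phi_{-T}$, which preserves $D_k^-$) to transport the question from any $p \in D_k^-$ back to a point $p'$ near $\scr C_k$, where the explicit formula applies. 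Your route is arguably more self-contained, since it avoids appealing to the separately-proved surgery isomorphism; the paper's route has the virtue of making the connection to \ref{eq:samebl} explicit, which is structurally relevant since that identification reappears in {\sc Thm.\,\ref{thm:sm}}.

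One sentence in your write-up is misleading and should be struck: ``This observation already extends $\phi$ to a smooth map on an open neighborhood of $D_k^-$ in $M$.'' Flow-invariance alone does not extend $\phi$ across $D_k^-$; it only says $\phi$ is constant along flow lines where it is already defined. The actual extension across the exceptional divisor comes from your local formula near $\scr C_k$, and flow-invariance then \emph{transports} that extension elsewhere via ${\rm Bl}^+\Phi_{-T}$. Also, for the explicit formula to agree with the genuine backward flow, you are implicitly assuming (as the paper does) that $L_{k-1,k}$ is chosen close enough to $r_k$ that the relevant portion near $S_k^+$ lies inside $\Omega_k$; this is harmless but worth stating.
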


    \begin{proof}
        Since $D_k^-$ is coframed in a canonical way by {\sc Rmk.\,\ref{rmk:cof}}, we have a canonical identification $\partial ({\rm Bl}^+_{D_k^-} U_k) \cong S_k^+ \times_k D_k^-$. We extend the map $\phi$ set-theoretically by letting it be the projection onto $S_k^+ \subset L_{k-1,k}$ at the exceptional divisor. It remains to check that this map is smooth at the exceptional divisor.

        Here, the analysis breaks into two cases. At points lying over $D_k^- \setminus \{0\text{-sec}\}$, the inclusion $D_k^- \subset U_k$ itself locally is canonically identified with $\bb R$ crossed with $S_k^- \subset L_{k,k+1}$, so that the map $\phi$ is smooth by \ref{eq:samebl}:
        \[ \smash{{\rm Bl}^+_{D_k^-} U_k \overset{\rm loc}\cong \bb R \times {\rm Bl}^+_{S_k^-} L_{k,k+1} \overset{\text{\ref{eq:samebl}}}\cong \bb R \times {\rm Bl}^+_{S_k^+} L_{k-1,k} \to L_{k-1,k}.} \]
        Near points lying over the zero-section $\scr C_k \subset D_k^-$, we use the local coordinates given by ${\rm Bl}^+_{D_k^-}U_k \cong D_k^- \times_k S_k^+ \times \bb R_{\ge 0}$, which we denote by $(x_-, \hat x_+, r)$. Direct calculation shows that the backwards flow map $\phi$ sends
        \[ (x_-, \hat x_+, r) \longmapsto (t \hat x_-, \sqrt{1 + t^2} \hat x_+) = \left(\frac r{\sqrt{1 + t^2}} x_-, \sqrt{1 + t^2} \hat x_+\right), \]
        where $t$ solves the equation $t \sqrt{1 + t^2} = \|x_-\| \cdot \|x_+\| = r \|x_-\|$. Indeed, this is because the flow preserves the unit-normalized vectors $\hat x_-, \hat x_+$, as well as the quantity $\|x_-\| \cdot \|x_+\|$. It only remains to see that this is smooth at $r = 0$. It suffices to show that $\sqrt{1 + t^2}$ is a smooth function of $(r, x_-)$. Indeed, squaring the defining equation for $t$ yields the equation
        \[ t^4 + t^2 - r^2 \cdot \|x_-\|^2 = 0. \]
        Among the two roots $t^2 = \tfrac 12\left(-1 \pm \sqrt{1 + 4r^2 \cdot \|x_-\|^2}\right)$, the solution must be the non-negative one, i.e. the one with plus. Hence,
        \[ \sqrt{1 + t^2} = \sqrt{\frac{1 + \sqrt{1 + 4r^2 \cdot \|x_-\|^2}}2}. \]
        This is smooth because $\|x_-\|^2$ is smooth in $x_-$, and the square-roots are always evaluated in the positive range of definition.
    \end{proof}

    \begin{THM}[Smooth upgrades of Defs.\,\ref{def:moduli} via oriented blow-ups]\label{thm:sm}
        After a generic perturbation of the vector field $X$ away from the $\scr C_i$, the following two formulas together give a recursive definition of smooth structures with corners on the sets $\scr M_{i,j} \subset \scr V_{i,j}$, together with smooth evaluation maps ${\rm ev}_{u,u+1} : \scr V_{i,j} \to L_{u,u+1}$ for every $i \le u < j$:
        \begin{itemize}
            \item[\sc i.] $\scr M_{i,j} := \scr V_{i,j} \underset{L_{j-1,j}}\times S_j^+$, where the first map $\scr V_{i,j} \to L_{j-1,j}$ is the evaluation map ${\rm ev}_{j-1,j}$, and the second one $S_j^+ \to L_{j-1,j}$ is the inclusion map.
            \item[\sc ii.] $\scr V_{i,i+1} := S_i^-$, with evaluation map ${\rm ev}_{i,i+1}$ the inclusion map; and $\scr V_{i,j+1} := {\rm Bl}^+_{\scr M_{i,j}} \scr V_{i,j}$ for $j > i$, with evaluation maps ${\rm ev}_{u,u+1}$ inherited from $\scr V_{i,j}$ for $i \le u < j$, and ${\rm ev}_{j,j+1}$ given by the composite ${\rm Bl}^+_{\scr M_{i,j}} \scr V_{i,j} \overset{\text{\sc Prop.\,\ref{prop:blfib}}}{-\!\!\!-\!\!\!\longrightarrow} {\rm Bl}^+_{S_j^+} L_{j-1,j} \stackrel{\text{\ref{eq:samebl}}}= {\rm Bl}^+_{S_{j}^-} L_{j,j+1} \to L_{j,j+1}$.  The new boundary coming from the blow-up is denoted by $\partial^{j}$.
        \end{itemize}
        Similarly, we define a smooth structure with corners on $\scr W_i$, with smooth evaluation map ${\rm ev} : \scr W_i \to M$ at the endpoint, by defining it on the open cover $\{{\rm ev}^{-1}U_j\}_{i \le j \le n}$:
        \[ {\rm ev}^{-1}U_j := \begin{cases}
            D_j^- & \text{if } j = i \\
            \scr V_{i,j} \underset{L_{j-1,j}}\times {\rm Bl}^+_{D_j^-} U_j & \text{if } j > i,
        \end{cases} \]
        where in the fiber product the map $\scr V_{i,j} \to L_{j-1,j}$ is ${\rm ev}_{j-1,j}$, and the map ${\rm Bl}^+_{D_j^-}U_j \to L_{j-1,j}$ is that of {\sc Prop.\,\ref{prop:backmap}.} At the $j^{\rm th}$ blow up, we denote the new boundary by $\partial^j$.
    \end{THM}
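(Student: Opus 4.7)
The plan is to proceed by increasing induction on $j-i$, jointly constructing the smooth corner structures on $\scr V_{i,j}$ and $\scr M_{i,j}$ together with their evaluation maps, and verifying at each stage that the boundary identifications from {\sc Def.\,\ref{def:moduli}} hold. The base case $\scr V_{i,i+1}=S_i^-$ is a closed corner-less manifold with evaluation map the inclusion $S_i^-\hookrightarrow L_{i,i+1}$. The inductive hypothesis is that $\scr V_{i,j}$ has been built as a smooth manifold with $\{i<u<j\}$-corners equipped with smooth evaluation maps, and that $\partial^u\scr V_{i,j}=\scr M_{i,u}\times_u\scr V_{u,j}$ holds compatibly for all $i<u<j$.

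The core of the inductive step invokes the three lemmas just proved. First, I would define $\scr M_{i,j}:=\scr V_{i,j}\times_{L_{j-1,j}}S_j^+$ via {\sc Prop.\,\ref{prop:fib}}; its transversality hypothesis --- that ${\rm ev}_{j-1,j}$ restricted to every closed stratum of $\scr V_{i,j}$ is transverse to $S_j^+\hookrightarrow L_{j-1,j}$ --- is precisely what the generic perturbation of $X$ is supposed to supply. Since this is a countable family of open-dense conditions on the pseudo-gradient (Sard--Smale applied to the parametric families of evaluation maps, within a Fr\'echet space of admissible perturbations supported away from $\scr C$), all can be achieved by a single $X$ by Baire category. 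Second, I would form $\scr V_{i,j+1}:={\rm Bl}^+_{\scr M_{i,j}}\scr V_{i,j}$ via {\sc Def.\,\ref{def:bu}}; the same transversality ensures $\scr M_{i,j}\subset\scr V_{i,j}$ is properly embedded. The new exceptional stratum, tagged $\partial^j$, is $\bb P^+(\scr N)$ where $\scr N$ is the normal bundle of $\scr M_{i,j}$ in $\scr V_{i,j}$, which is the pullback under ${\rm ev}_{j-1,j}$ of the normal bundle of $S_j^+$ in $L_{j-1,j}$; the co-framing of {\sc Rmk.\,\ref{rmk:cof}} identifies the latter with $T_j^-$, yielding $\partial^j\scr V_{i,j+1}\cong\scr M_{i,j}\times_j S_j^-=\scr M_{i,j}\times_j\scr V_{j,j+1}$, as intended.

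Third, I would produce the evaluation maps on $\scr V_{i,j+1}$: the existing ${\rm ev}_{u,u+1}$ for $u<j$ pull back through the smooth blow-down, while the new ${\rm ev}_{j,j+1}$ is obtained by applying {\sc Prop.\,\ref{prop:blfib}} to ${\rm ev}_{j-1,j}:\scr V_{i,j}\to L_{j-1,j}$ with respect to $S_j^+\hookrightarrow L_{j-1,j}$, giving a smooth lift to ${\rm Bl}^+_{S_j^+}L_{j-1,j}$, and then composing with the diffeomorphism \ref{eq:samebl} and the blow-down to $L_{j,j+1}$. Preserving the old boundary identifications $\partial^u\scr V_{i,j+1}=\scr M_{i,u}\times_u\scr V_{u,j+1}$ for $u<j$ requires a separate commutation check: restricting the blow-up to $\partial^u\scr V_{i,j}=\scr M_{i,u}\times_u\scr V_{u,j}$ should yield the blow-up along $\scr M_{i,u}\times_u\scr M_{u,j}$, which agrees with $\scr M_{i,u}\times_u{\rm Bl}^+_{\scr M_{u,j}}\scr V_{u,j}=\scr M_{i,u}\times_u\scr V_{u,j+1}$ by the inductive construction --- essentially saying that blow-ups commute with fiber-products when the center pulls back nicely.

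For $\scr W_i$, I would verify each chart and then glue. The chart $j=i$ is the corner-less manifold $D_i^-$. For $j>i$, the chart ${\rm ev}^{-1}U_j=\scr V_{i,j}\times_{L_{j-1,j}}{\rm Bl}^+_{D_j^-}U_j$ is a smooth manifold with corners by {\sc Prop.\,\ref{prop:fib}} (with an additional transversality condition folded into the same Baire argument), using that ${\rm Bl}^+_{D_j^-}U_j\to L_{j-1,j}$ is smooth by {\sc Prop.\,\ref{prop:backmap}}. Gluing on overlaps ${\rm ev}^{-1}(U_{j-1}\cap U_j)$ is handled by the compatibility of the pseudo-gradient flow with the blow-ups, invoking {\sc Prop.\,\ref{prop:backmap}} again. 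The main obstacle I anticipate is the combined bookkeeping: one must simultaneously arrange the countably many transversality conditions with a single perturbation of $X$, \emph{and} verify that all boundary identifications are preserved as further blow-ups take place, which is the most delicate part of the induction.
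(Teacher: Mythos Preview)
Your inductive outline matches the paper's, but the transversality argument is handled differently and your version has a soft spot. You propose to achieve all the transversality conditions at once via Sard--Smale and Baire category in a Fr\'echet space of perturbations of $X$. The difficulty is that the condition at stage $j$ --- that ${\rm ev}_{j-1,j}$ on every stratum of $\scr V_{i,j}$ be transverse to $S_j^+$ --- is not a fixed condition on $X$: the manifold $\scr V_{i,j}$ and its evaluation map are themselves \emph{built from} the perturbed $X$ via all the earlier blow-ups. So the conditions are nested rather than a flat countable list, and a naive Baire argument does not immediately apply. The paper sidesteps this by perturbing $X$ \emph{only in a thin cylindrical slice just below $L_{j-1,j}$} at each inductive step: a generic family of diffeomorphisms $\{\Psi_t\}$ of $L_{j-1,j}$ is incorporated into the flow there, which moves the inclusion $S_j^+\hookrightarrow L_{j-1,j}$ enough to achieve transversality with the already-constructed strata of $\scr V_{i,j}$, while leaving all earlier $\scr V_{i,j'}$ untouched. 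This localization is what makes the induction clean.

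Two smaller points. First, for $\scr W_i$ you say an ``additional transversality condition'' must be folded in, but in fact none is needed: the map ${\rm Bl}^+_{D_j^-}U_j\to L_{j-1,j}$ of {\sc Prop.\,\ref{prop:backmap}} is a submersion off the exceptional divisor (it is the flow), and on the divisor it is projection to $S_j^+$, so transversality there reduces to the condition already imposed for $\scr M_{i,j}$. Second, the theorem asserts that the recursive formulas really do recover the \emph{sets} of {\sc Defs.\,\ref{def:moduli}}; you should say a word about why the exceptional divisor at each blow-up corresponds exactly to the flow lines breaking at $\scr C_j$, and why the interior corresponds to the unbroken ones --- the paper checks this explicitly.
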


    \begin{proof}
        We must do two things: show that these definitions truly make sense after generic perturbation of $X$, and that they indeed agree set-theoretically with {\sc Defs.\,\ref{def:moduli}}.
        
        First, one must ensure the validity of the fiber-product $\scr V_{i,j} \underset{L_{j-1,j}}\times S_j^+$, which requires the sphere $S_j^+ \subset L_{j-1,j}$ to meet all the lower strata of $\scr V_{i,j}$ transversely (under the map ${\rm ev}_{j-1,j}$.) We claim that this can be achieved by a small generic perturbation of the pseudo-gradient $X$ between $\scr C_{j-1}$ and $\scr C_j$. First, by general transversality theory, it follows that for any generic small family of diffeomorphisms $\{\Psi_t\}_{t \in [0,\epsilon]}$ of $L_{j-1,j}$, with $\Psi_{0} = {\rm id}$, post-composition with $\Psi_\epsilon$ is sufficient to perturb the inclusion $S_j^+ \hookrightarrow L_{j-1,j}$ and make it transverse to the strata of $\scr V_{i,j}$. Assume without loss of generality that $\{\Psi_t\}$ can be extended smoothly outside of $[0, \epsilon]$, such that $\Psi_t \equiv {\rm id}$ for $t \le 0$, and $\Psi_t \equiv \Psi_\epsilon$ for $t \ge \epsilon$. If $\Psi_t$ is small enough, we claim that it can be incorporated into the flow $\Phi$ by changing the pseudo-gradient $X$ on a small cylindrical slice right below $L_{j-1,j}$, so that the new embedding of $S_j^+ \hookrightarrow L_{j-1,j}$ is equal to the old one, post-composed with $\Psi_\epsilon$.

        To see this, denote $L_{j-1,j}$ by $L^1$, choose another level-set $L^0$ between $L^1$ and $f^{-1}(r_j)$, and let $\{L^t\}_{t \in [0,\epsilon]}$ be the image of $L^0$ under the upward $\Phi$-flow for time $t$. Note that only $L^0$ and $L^1$ are true level-sets (meaning that $f$ is constant on them). Also assume that $\epsilon$ is small enough so that $\{L^t\}_{t \in [0,\epsilon]}$ never reach $L^1$. Further, let us use the notation $\Phi^{t,t'}$ to denote the $\Phi$-flow diffeomorphism needed to get from $L^t$ to $L^{t'}$, for any $t,t' \in [0, \epsilon] \cup \{1\}$. Now, simply define new diffeomorphisms $\{\tilde \Phi^{0,t} : L^0 \to L^t\}_{t \in [0,\epsilon]}$ via the composite $\tilde \Phi^{0,t} := \Phi^{1,t} \circ \Psi_t \circ \Phi^{0,1}$, and define the new pseudo-gradient $\tilde X$ as minus the derivative of $\tilde \Phi^{0,t}$ in the $t$-direction inside the slice between $L^0$ and $L^\epsilon$, and let it be the old $X$ everywhere else. This operation incorporates the diffeomorphism $\Psi_\epsilon$ into the flow, in the desired way. Finally, it is still pseudo-gradient if the original $\Psi_t$ is sufficiently $C^1$-small, finishing the proof of the validity of the first blow-up.
        
        As for the validity of the blow-up ${\rm Bl}^+_{\scr M_{i,j}} \scr V_{i,j}$, which only requires that $\scr M_{i,j} \subset \scr V_{i,j}$ be properly embedded, this in fact follows from {\sc Prop.\,\ref{prop:blfib}} applied to the fiber-product we have just studied. This finishes the proof that the smooth structure on $\scr M_{i,j} \subset \scr V_{i,j}$ is well-defined. It is clear that these moduli spaces agree with the set-theoretic {\sc Defs.\,\ref{def:moduli}}, because at each step the flow lines which do not break at $\scr C_j$ continue on to the next level (these are the points in the interior of the blow-up), whereas the flow lines which break at $\scr C_j$ form the exceptional divisor.

        Finally, for $\scr W_i$ to be well-defined, we need to ensure two things: that the fiber-product defining ${\rm ev}^{-1}U_j$ is cut out transversely, and that the smooth structures on the open cover given by $U_k$'s agree on overlaps. For the first claim, we note that the map ${\rm Bl}^+_{D_j^-} U_j \to L_{j-1,j}$ is already a submersion away from the exceptional divisor (since the flow $\Phi$ is a diffeomorphism,) and that on the exceptional divisor it is the projection onto the $S_j^+$-factor; so transversality is superfluous on the interior, and follows from the transversality condition established two paragraphs ago, at the exceptional divisor. The latter claim follows because, as mentioned in the proof of {\sc Prop.\,\ref{prop:backmap}}, away from the critical values, the inclusion $D_j^- \subset U_j$ is locally the same as $S_j^- \subset L_{j,j+1}$ times an interval. A similar analysis of the exceptional divisors shows that this $\scr W_i$ agrees with {\sc Def.\,\ref{def:moduli}-iii}.
    \end{proof}

    \begin{RMK}\label{rmk:sym}
        The joint product of all the evaluation maps provides a smooth set-theoretic injection from $\scr M_{i,j} \subset \scr V_{i,j}$ into the product $L_{i,i+1} \times \cdots \times L_{j-1,j}$ of all the intermediary level-sets; indeed, a broken flow line is uniquely determined by all these evaluations, due to unique continuation. In fact, this injection turns out to be a smooth embedding. This can be proven by induction on $j$: away from the exceptional divisor, the forgetful map $\scr V_{i,j} \to \scr V_{i,j-1}$ is a diffeomorphism, so the old evaluation maps suffice to obtain an embedding; at the exceptional divisor, the kernel of the differential of $\scr V_{i,j} \to \scr V_{i,j-1}$ consists precisely of the $S_j^-$-directions, which are mapped injectively into the next level-set $L_{j,j+1} \supset S_j^-$. This shows that the smooth structure on $\scr M_{i,j}, \scr V_{i,j}$ is in some sense canonical; it also shows that it is symmetric with respect to reversing the direction of the flow in the case of $\scr M_{i,j}$, even though the blowing-up process is not. In a similar fashion, the $U_j$-portion of $\scr W_i$ is embedded into $L_{i,i+1} \times \cdots \times L_{j-1,j} \times M$ via the evaluation maps, which we leave to the interested reader to check.
    \end{RMK}

    We conclude this section by noting that the moduli spaces $\scr M_{i,j}, \scr W_i$ we have constructed from the data of a generic Morse-Bott flow $(M, f, \Phi)$ naturally give rise to flow categories.

    \begin{EXP}\label{exp:F}
        The various critical sets $\{\scr C_i\}_{1 \le i \le n}$, and moduli spaces $\{\scr M_{i,j}\}_{1 \le i < j \le n}$, form a graded flow category $\scr F_{\rm MB}$ associated to $(M, f, \Phi)$, with Maslov grading $\mu(i) := {\rm rk\;} T_i^-$. Indeed, it only remains to see that \ref{eq:multifib} are transversely cut-out, which in fact follows from the transversality conditions already imposed in {\sc Thm.\,\ref{thm:sm}}, by a simple induction on $i_n$.
    \end{EXP}

    \begin{EXP}\label{exp:Faugm}
        The $\scr F_{\rm MB}$ above can be augmented to a graded flow category $\scr F^{\rm augm}_{\rm MB}$, by adding one extra index $\infty$ formally after $n$, and defining $\scr C_\infty := M$, $\scr M_{i,\infty} := \scr W_i$, with target map $\scr M_{i,\infty} \to M$ given by evaluation at the endpoint and Maslov grading $\mu(\infty) := -1$. This will be useful for both framing $\scr F_{\rm MB}$ in {\sc\S\ref{sec:stfr}}, and proving the isomorphism of {\sc\S\ref{sec:cjs}}.
    \end{EXP}

\section{Stable normal framings for the Morse-Bott flow category}\label{sec:stfr}

    In this section, we produce stable normal framings, in the sense of {\sc Def.\,\ref{def:snf}}, for the flow category $\scr F^{\rm augm}_{\rm MB}$ associated to a generic Morse-Bott flow $(M,f,\Phi)$ of {\sc Ex.\,\ref{exp:Faugm}}. In this sense, we prove the following abstract result {\sc Thm.\,\ref{thm:fr}}; for motivation, $A_{i,j}$ can be taken to be the normal bundles associated to some coherent system of embeddings $\iota_{i,j}$, see {\sc Def.\,\ref{def:cohemb}}, and likewise $N_i := T_i^-$ (other small variations will also be useful, see {\sc Rmk.\,\ref{rmk:other}} below), so the result implies that after stabilizing the embeddings $\iota_{i,j}$ via $X_{i,j}$, and letting $V_{i,j} := Y_{i,j}$, $\xi_i := T_i^-$, we get the desired stable normal framings.

    \begin{THM}\label{thm:fr}
        Let $A_{i,j}$ be vector bundles over $\scr M_{i,j}$, and similarly $A_{i,\infty}$ be vector bundles over $\scr M_{i,\infty} := \scr W_i$ from {\sc Ex.\,\ref{exp:Faugm}}, with associative identifications $A_{i,j} \oplus A_{j,k} \cong A_{i,k}$ covering the inclusions $\scr M_{i,j} \times_j \scr M_{j,k} \subset \scr M_{i,k}$ for all $i<j<k$ in $I \sqcup \{\infty\}$. Let $M_i$ denote the restriction of $A_{i,\infty}$ to the subspace $\scr C_i \subset \scr W_i$, consisting of constant flow lines, and $M_\infty = 0$. Further, let $N_i$ be vector bundles over $\scr C_i$ (by convention, $N_\infty = 0$), and $P_i$ be vector \emph{spaces} (again, $P_\infty = 0$) with identifications $M_i \oplus N_i = P_i$. Then, there exist vector spaces $X_{i,j}$ and $Y_{i,j}$ with associative identifications $X_{i,j} \oplus X_{j,k} = X_{i,k}$ and $Y_{i,j} \oplus Y_{j,k} = Y_{i,k}$ for all $i < j < k$ in $I \sqcup \{\infty\}$ whose definitions depend only on the $P_i$, together with vector bundle isomorphisms
        \[ \phi_{i,j} : N_i \oplus A_{i,j} \oplus X_{i,j} \overset\sim\to Y_{i,j} \oplus N_j, \]
        over $\scr M_{i,j}$ satisfying the following commutativity condition for all $i < j < k$ in $I \sqcup \{\infty\}$:
        \begin{equation*}\numeq\label{eq:hex} \begin{tikzcd}
            N_i \oplus A_{i,j} \oplus X_{i,j} \oplus A_{j,k} \oplus X_{j,k} \rar["\phi_{i,j} \oplus {\rm id}"]\dar["{\rm swap}"] & Y_{i,j} \oplus N_j \oplus A_{j,k} \oplus X_{j,k} \rar["{\rm id} \oplus \phi_{j,k}"] & Y_{i,j} \oplus Y_{j,k} \oplus N_k \dar["{\rm mult}"] \\
            N_i \oplus A_{i,j} \oplus A_{j,k} \oplus X_{i,j} \oplus X_{j,k} \rar["{\rm mult}"] & N_i \oplus A_{i,k} \oplus X_{i,k} \rar["\phi_{i,k}"] & Y_{i,k} \oplus N_k,
        \end{tikzcd} \end{equation*}
        over $\partial^j \scr M_{i,k}$. Moreover, this can be achieved up to a contractible space of choices.
    \end{THM}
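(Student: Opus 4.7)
The proof plan follows the suggestion of Abouzaid recorded in the acknowledgments, namely to characterize framings of $\scr M_{i,j}$ by their extendability over $\scr W_i$. Concretely, I would construct the $\phi_{i,\infty}$ on $\scr W_i$ first, and then recover $\phi_{i,j}$ for $i < j < \infty$ by restricting $\phi_{i,\infty}$ to the constant-flow-line section $\scr M_{i,j} = \scr M_{i,j} \times_j \scr C_j \hookrightarrow \partial^j \scr W_i$ and absorbing the leftover $M_j$-summand into $N_j$ via the fixed splitting $M_j \oplus N_j = P_j$. The crucial geometric fact underlying everything is that $\scr W_i$ deformation retracts onto $\scr C_i$ along the pseudogradient flow, so that bundle-isomorphism extension problems over $\scr W_i$ with prescribed boundary and $\scr C_i$-values are solvable up to a contractible space of choices.

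Define the vector spaces $X_{i,j} := \bigoplus_{i < \ell \le j,\, \ell \in I} P_\ell$ and $Y_{i,j} := \bigoplus_{i \le \ell < j,\, \ell \in I} P_\ell$; these telescope associatively and satisfy $Y_{i,\infty} = P_i \oplus X_{i,\infty}$, so that the $P_i$-trivialization over $\scr C_i$ fits naturally as a map $N_i \oplus A_{i,\infty} \oplus X_{i,\infty} \cong Y_{i,\infty}$ after stabilization by $\mathrm{id}_{X_{i,\infty}}$. I would then proceed by downward induction on $i \in I$. At each stage, the objective is to build $\phi_{i,\infty}$ on $\scr W_i$, restricting over $\scr C_i$ to the $P_i$-trivialization, and over each $\partial^j \scr W_i = \scr M_{i,j} \times_j \scr W_j$ to the hexagon composite of $\phi_{i,j}$ (to be constructed on $\scr M_{i,j}$) and the previously-inducted $\phi_{j,\infty}$.

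The $\phi_{i,j}$ for $i < j < \infty$ are themselves built by a secondary downward induction on $j - i$. The base case $j = i+1$ has no positive-codimension boundary on $\scr M_{i,j}$, so only a single extension is needed, and it is again controlled by extending from $\scr C_i$ via the retraction. In the inductive step, the hexagon prescribes boundary data on $\partial^k \scr M_{i,j}$ in terms of $\phi_{i,k}$ (from this sub-induction) and $\phi_{k,j}$ (from the primary induction); the extension to all of $\scr M_{i,j}$ is checked by reducing it to an extension problem on $\scr W_i$ under the constant-flow-line inclusion, where the deformation retract onto $\scr C_i$ and the given $P_i$-trivialization render the extension routine. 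Once all $\phi_{i,j}$ for $j \in I$ have been produced, the boundary data for $\phi_{i,\infty}$ over $\scr C_i \cup \partial \scr W_i$ is fully assembled, and one final application of the $\scr W_i \to \scr C_i$ retraction yields the extension over $\scr W_i$, completing the induction.

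The main obstacle, I expect, will be consistency of boundary data on the higher-codimension strata $\partial^{\{j_1 < \cdots < j_r\}}$ of both $\scr M_{i,j}$ and $\scr W_i$: one must verify by a careful diagram chase that the different composites of hexagons arising from multiply-broken flow lines agree on overlapping faces. This is essentially a large but routine book-keeping exercise using the associativity of the $A_{i,j} \oplus A_{j,k} \cong A_{i,k}$ identifications and repeated application of the already-verified hexagons at lower levels. Contractibility of the space of choices follows because at each inductive step the space of valid extensions is a torsor over a space of bundle-automorphism homotopies, which is itself contractible thanks to the deformation retraction onto $\scr C_i$.
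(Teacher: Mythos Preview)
Your overall architecture differs from the paper's, and the step you flag as routine is in fact where the argument lives. The paper does \emph{not} build $\phi_{i,j}$ by extending prescribed boundary data inward over $\scr M_{i,j}$. Instead it first proves a lemma ({\sc Lem.\,4.2}) producing trivializations $\psi_i : A_{i,\infty}\overset\sim\to M_i$ over each $\scr W_i$ subject to a specific transition-function axiom on $\partial^j\scr W_i$; this axiom is encoded by passing to a quotient $\widetilde{\scr W}_i$ (collapsing each fiber of $\scr M_{i,j}\times_j\scr W_j\to\scr M_{i,j}$), which is shown to be a disk bundle over $\scr C_i$. From the $\psi_i$ one then derives \emph{strictly associative} isomorphisms $\alpha_{i,j}:A_{i,j}\oplus M_j\overset\sim\to M_i$ over $\scr M_{i,j}$ ({\sc Cor.\,4.3}), and writes $\phi_{i,j}$ as an explicit global formula in terms of $\alpha_{i,j}$. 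The hexagon then fails only by a canonical rotation ${\rm Rot}_{t\pi/2}\otimes{\rm id}_{M_j}$ on a doubled summand, and is made strict by a collar push-off near $\partial^j\scr M_{i,k}$.

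The gap in your plan is the extension of $\phi_{i,j}$ from $\partial\scr M_{i,j}$ to the interior. The spaces $\scr M_{i,j}$ have no controlled homotopy type, so extending a bundle isomorphism from the boundary is obstructed in general: the relevant obstruction lives in $[(\scr M_{i,j},\partial\scr M_{i,j}),\,GL]$, which need not vanish even when a global isomorphism exists (think of a degree-one map $S^1\to GL_2^+$ on $\partial D^2$). Your proposed fix, ``reduce to an extension problem on $\scr W_i$,'' is circular as stated: in your scheme $\phi_{i,\infty}$ is built only \emph{after} the $\phi_{i,j}$ supply its boundary data, so you cannot appeal to $\scr W_i$ to produce $\phi_{i,j}$. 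Moreover the bundle $N_j$ appearing in the target of $\phi_{i,j}$ is pulled back from $\scr C_j$ via the target map and does not extend over $\scr W_i$, so the ``same'' extension problem does not literally live there. What makes the paper's approach work is precisely that the $\alpha_{i,j}$ are defined \emph{globally} on $\scr M_{i,j}$ from the outset (as restrictions of the $\psi_i$), so no relative extension is ever attempted; the only correction needed is local near the boundary, where it is governed by an explicit rotation rather than an unknown obstruction class. Your final paragraph also underestimates the hexagon coherence: it is not a diagram chase but requires identifying the explicit homotopy and performing compatible push-offs along multi-collars, which is the actual content of the proof.
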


    To prove the result, we need a few preliminaries:

    \begin{LEM}\label{lem:trivw}
        The space which parameterizes all collections $\{\psi_i : A_{i,\infty} \overset\sim\to M_i\}_{i \in I}$ of bundle isomorphisms covering the source maps $\scr W_i \to \scr C_i$, satisfying the following two axioms:
        \begin{itemize}
            \item[\sc i.] The restriction $\psi_i|_{\scr C_i}$ to the space of constant flow lines is the identity;
            \item[\sc ii.] Given pairs $(x, y), (x, z) \in \scr M_{i,j} \times_j \scr W_j = \partial^j \scr W_i$, sharing the same first entry $x$, there is an equality of transition functions
            \[ \psi_i|_{(x,z)}^{-1} \circ \psi_i|_{(x,y)} = {\rm id}_{A_{i,j}|_x} \oplus (\psi_j|_z^{-1} \circ \psi_j|_y) \]
            from $A_{i,\infty}|_{(x, y)} = A_{i,j}|_x \oplus A_{j,\infty}|_y$ to $A_{i,\infty}|_{(x, z)} = A_{i,j}|_x \oplus A_{j,\infty}|_z$;
        \end{itemize}
        is contractible.
    \end{LEM}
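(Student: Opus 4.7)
The plan is to proceed by downward induction on the index $i$ in the totally ordered set $I \sqcup \{\infty\}$, with the base case $i = \infty$ being vacuous since $N_\infty = 0$. Letting $\Psi_{\ge i}$ denote the space of tuples $\{\psi_j\}_{j \ge i}$ satisfying axioms (i) and (ii), and assuming $\Psi_{>i}$ has been shown contractible, I will consider the forgetful map $p : \Psi_{\ge i} \to \Psi_{>i}$ and show that it is a Hurewicz fibration whose fibers $T_i := p^{-1}(\{\psi_j\}_{j > i})$ are contractible; this will suffice for the inductive step.

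To analyze $T_i$, I will first reinterpret axiom (ii). Over the boundary stratum $\partial^j \scr W_i = \scr M_{i,j} \times_j \scr W_j$, the axiom is equivalent to the statement that $(x, y) \mapsto \psi_i|_{(x,y)} \circ ({\rm id}_{A_{i,j}|_x} \oplus \psi_j|_y^{-1})$ depends only on $x \in \scr M_{i,j}$, i.e., descends along the projection $\pi_j : \partial^j \scr W_i \to \scr M_{i,j}$ to a section of the bundle $F_{i,j} := \mathrm{Iso}(A_{i,j} \oplus M_j, M_i)$ over $\scr M_{i,j}$. Consequently, $T_i$ will identify naturally with the space of sections of a single bundle $\tilde E_i$ on the quotient space $\hat{\scr W}_i := \scr W_i / {\sim}$ (collapsing each fiber of every $\pi_j$ to a point) which are required to restrict to the identity on $\scr C_i \subset \hat{\scr W}_i$.

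I will then deduce contractibility of $T_i$ from the key geometric claim that the inclusion $\scr C_i \hookrightarrow \hat{\scr W}_i$ is a cofibration and a homotopy equivalence: granting this, the restriction map $\Gamma(\hat{\scr W}_i, \tilde E_i) \to \Gamma(\scr C_i, \tilde E_i|_{\scr C_i})$ will be a trivial Hurewicz fibration, making its fiber $T_i$ (over the identity section) contractible. The hard part will be proving the geometric claim. My argument uses the backward pseudo-gradient flow on $\scr W_i$, which after reparametrization in time defines a deformation retraction onto $\scr C_i$. Crucially, restricted to each $\partial^j \scr W_i$, this retraction acts trivially on the $\scr M_{i,j}$-coordinate and as the backward flow on the $\scr W_j$-coordinate, which itself retracts each fiber of the source map $\scr W_j \to \scr C_j$ onto a constant loop in $\scr C_j$. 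Hence the retraction will be compatible with every $\pi_j$ and descend to a deformation retraction of $\hat{\scr W}_i$ onto $\scr C_i$; compatibility between the simultaneous fiber-collapses at higher-codimension corners of $\scr W_i$ will follow from the associativity of the flow category structure, as already invoked implicitly when checking the consistency of axiom (ii) at double corners.
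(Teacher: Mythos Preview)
Your overall strategy matches the paper's: descending induction on $i$, reinterpreting axiom {\sc ii} as descent data so that the space of compatible $\psi_i$ becomes sections of a bundle on a quotient $\hat{\scr W}_i$, and reducing everything to the geometric claim that $\scr C_i \hookrightarrow \hat{\scr W}_i$ is an acyclic cofibration. (Your explicit mention that $p:\Psi_{\ge i}\to\Psi_{>i}$ is a Hurewicz fibration is a welcome addition the paper leaves implicit.)

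The gap is in your proof of the geometric claim. You propose to use the backward pseudo-gradient flow on $\scr W_i$, shrinking the free endpoint, and argue it descends to a deformation retraction of $\hat{\scr W}_i$ onto $\scr C_i$. The descent step is fine: on $\partial^j\scr W_i=\scr M_{i,j}\times_j\scr W_j$ the free endpoint lies in the $\scr W_j$-segment, and since flowing backward through the break at $\scr C_j$ takes infinite time, the $\scr M_{i,j}$-coordinate is indeed preserved, so $\pi_j$-fibers are respected. But for exactly that reason the retraction, restricted to $\partial^j\scr W_i$, terminates at $(x,\text{const})$; in the quotient this is the image of $\scr M_{i,j}$, not $\scr C_i$. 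Any attempt to continue past the break and shrink the $\scr M_{i,j}$-segment pushes the point out of $\partial^j\scr W_i$ into the interior, and since distinct $y,z\in\scr W_j$ reach the constant loop at different reparametrized times, equivalent pairs $(x,y)\sim(x,z)$ are sent at intermediate times to a boundary point and an interior point respectively, which are not equivalent. So the backward flow descends to a homotopy on $\hat{\scr W}_i$, but not one ending at $\scr C_i$.

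The paper avoids this by instead exhibiting an explicit homeomorphism $\hat{\scr W}_i\cong\hat D_i^-$, where $\hat D_i^-$ is the fiberwise radial compactification of the disk bundle $T_i^-$ over $\scr C_i$: a flow line is sent to the pair consisting of its initial direction in $S_i^-$ and the (signed, possibly infinite) time needed to flow from $L_{i,i+1}$ to the free endpoint. All broken flow lines have time $+\infty$, so the map factors through the quotient, and one checks it is a continuous bijection of compact Hausdorff spaces. The disk-bundle structure then makes the retraction onto $\scr C_i$ immediate. Note that the resulting radial retraction on $\hat D_i^-$ does \emph{not} lift to your backward flow on $\scr W_i$; it ``unbreaks'' flow lines by pulling the time coordinate down from $+\infty$, which has no fiberwise preimage upstairs.
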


    \begin{proof}
        We proceed by descending induction on $i \in I$; given a specific $i \in I$, let us assume that isomorphisms $\psi_j$ for $j > i$ have all been constructed, satisfying conditions {\sc i} and {\sc ii} above, and let us show that the space of $\psi_i$ satisfying these same two conditions is also contractible.

        Consider the topological quotient $\widetilde{\scr W}_i$ of $\scr W_i$ which identifies any two pairs $(x, y) \sim (x, z)$ on any boundary component $\partial^j \scr W_i$ which share the same first entry. Condition {\sc ii} implies that the transition functions ${\rm id}_{A_{i,j}|_x} \oplus (\psi_j|_{z}^{-1} \circ \psi_j|_{y})$ for all $j$, and all $x, y, z$, define a descent datum for the bundle $A_{i,\infty}$ along the quotient map $\scr W_i \twoheadrightarrow \widetilde{\scr W}_i$. Let us denote the descended bundle on $\widetilde{\scr W}_i$ by $\widetilde A_{i, \infty}$; it is locally trivial at any orbit $(x, *) \subset \partial^j \scr W_i$, since one can use $\psi_j$ to trivialize $A_{i,\infty}|_{(x,*)}$ in a neighborhood thereof inside $\partial^j \scr W_i$, and then use the fact that $\partial^j \scr W_i \subset \scr W_i$ is a cofibration to extend the trivialization all the way to a neighborhood of $(x,*) \subset \scr W_i$. Note that an isomorphism $\psi_i : A_{i, \infty} \overset\sim\to M_i$ satisfies condition {\sc ii} if and only if it factors through the descended bundle $\widetilde A_{i, \infty}$. So the claim we wish to prove is equivalent to showing that the space of bundle isomorphisms $\tilde \psi_i : \widetilde A_{i,\infty} \overset\sim\to M_i$ covering the source map to $\scr C_i$, which satisfy condition {\sc i}, is contractible.

        To this end, we observe that $\widetilde{\scr W}_i$ is homeomorphic to the disk bundle associated to the vector bundle $T_i^-$ over $\scr C_i$, and in particular that the inclusion $\scr C_i \subset \widetilde{\scr W}_i$ of the subspace of constant paths is a homotopy equivalence. Indeed, if we define
        \[ \hat D_i^- := S_i^- \times [-\infty, \infty] \smash{\;\big/\,} (x,-\infty) \sim (y, -\infty),\, \forall x, y \text{ in the same fiber}, \]
        then there is a map $\scr W_i \to \hat D_i^-$ which takes a flow line $\gamma$ from $\scr C_i$ terminating at some point $x$ to the intersection of $\gamma$ with $S_i^-$ in the first entry, (prolong $\gamma$ if it is too short, so that it intersects $S_i^-$) and to the length of time needed to flow from $L_{i,i+1}$ to $x$ in the second entry (with a negative sign if one has to flow backwards, and of infinite magnitude if it cannot be reached in finite time.) This is continuous by standard ODE theory, and descends to the quotient $\widetilde {\scr W}_i \to \hat D_i^-$ because broken flow lines always take an infinite amount of time to travel. In fact, this is a continuous bijection between compact Hausdorff spaces, hence a homeomorphism, as claimed. The desired conclusion now follows from the fact that sections of any bundle can be extended along an acyclic cofibration up to a contractible space of choices; so in particular we can extend $\tilde\psi_i$ from $\scr C_i$ to the whole $\widetilde{\scr W}_i$.
    \end{proof}

    \begin{COR}\label{cor:relstfr}
        There is a contractible space parameterizing associative bundle isomorphisms $\alpha_{i,j} : A_{i,j} \oplus M_j \overset\sim\to M_i$ over $\scr C_i$ for all $i, j \in I$. These can be further extended associatively to $\alpha_{i,\infty} : A_{i,\infty} \oplus M_\infty \overset\sim\to M_i$, where $M_\infty := 0$ by convention.
    \end{COR}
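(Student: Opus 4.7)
The plan is to extract the collection $\{\alpha_{i,j}\}$ directly from the $\{\psi_i\}$ produced by Lemma \ref{lem:trivw}, noting that once one sets $\alpha_{i,\infty} := \psi_i$, conditions (i)--(ii) of that lemma are a verbatim repackaging of the normalization $\alpha_{i,\infty}|_{\scr C_i} = \mathrm{id}$ and the associativity of the full extended system $\{\alpha_{i,j}\}_{j \in I \sqcup \{\infty\}}$.

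Given $\{\psi_i\}$ as in Lemma \ref{lem:trivw}, set $\alpha_{i,\infty} := \psi_i$ (recall $M_\infty = 0$), and for $i < j$ in $I$ define $\alpha_{i,j}$ pointwise at $x \in \scr M_{i,j}$ by $\alpha_{i,j}|_x := \psi_i|_{(x, \mathrm{const}_{t(x)})}$, where $(x, \mathrm{const}_{t(x)}) \in \scr M_{i,j} \times_j \scr W_j = \partial^j \scr W_i$ is the concatenation of $x$ with the constant flow line at $t(x) \in \scr C_j \subset \scr W_j$. Under the splitting $A_{i,\infty}|_{(x, \mathrm{const}_{t(x)})} = A_{i,j}|_x \oplus A_{j,\infty}|_{\mathrm{const}_{t(x)}} = A_{i,j}|_x \oplus M_j|_{t(x)}$, condition (i) for $\psi_j$ at a constant path turns $\psi_i|_{(x, \mathrm{const}_{t(x)})}$ into the desired isomorphism $A_{i,j}|_x \oplus M_j|_{t(x)} \overset\sim\to M_i|_{s(x)}$, which depends smoothly on $x$.

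Associativity for a triple $i < j < k$ with $k \in I \sqcup \{\infty\}$ is checked at $(x,y) \in \scr M_{i,j} \times_j \scr M_{j,k} = \partial^j \scr M_{i,k}$ as follows. Set $z_2 := \mathrm{const}_{t(x)}$, and let $z_1 := (y, \mathrm{const}_{t(y)}) \in \partial^k \scr W_j$ if $k \in I$, or $z_1 := y$ if $k = \infty$. Both $(x, z_1)$ and $(x, z_2)$ lie in $\partial^j \scr W_i$ with the same first entry $x$, so condition (ii) together with $\psi_j|_{z_2} = \mathrm{id}$ yields $\psi_i|_{(x, z_1)} = \psi_i|_{(x, z_2)} \circ (\mathrm{id} \oplus \psi_j|_{z_1})$. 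Identifying $(x, z_1)$ with $((x,y), \mathrm{const}_{t(y)}) \in \partial^k \scr W_i$ when $k \in I$, this is exactly $\alpha_{i,k}|_{(x,y)} = \alpha_{i,j}|_x \circ (\mathrm{id} \oplus \alpha_{j,k}|_y)$.

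Conversely, any associative system of $\alpha$'s including the $\alpha_{i,\infty}$'s produces a collection $\{\psi_i := \alpha_{i,\infty}\}$ satisfying (i)--(ii) by reversing the identifications above; the two constructions are mutual inverses, giving a homeomorphism of parameter spaces, and contractibility follows from Lemma \ref{lem:trivw}. The only real obstacle is bookkeeping the boundary identifications of iterated fiber products in $\partial \scr W_i$ alongside the constant-path inclusions $\scr C_j \hookrightarrow \scr W_j$; once these are kept straight, the corollary is formal.
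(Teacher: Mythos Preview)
Your proof is correct and essentially the same as the paper's. The paper defines $\alpha_{i,j} := \psi_i \circ ({\rm id}_{A_{i,j}} \oplus \psi_j^{-1})$ at an arbitrary point of $\scr W_j$ and then checks independence of that point via condition {\sc ii}; your definition is the special case of this formula at the constant point $\mathrm{const}_{t(x)}$, where $\psi_j^{-1} = {\rm id}$ by condition {\sc i}, so the well-definedness check becomes unnecessary. Your associativity verification and the paper's direct computation are the same content rearranged, and your explicit inverse $\{\alpha_{i,j}\} \mapsto \{\psi_i := \alpha_{i,\infty}\}$ makes the identification of parameter spaces with {\sc Lem.\,\ref{lem:trivw}} slightly more transparent than the paper's terser phrasing.
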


    \begin{proof}
        In virtue of {\sc Lem.\,\ref{lem:trivw}}, we can replace $A_{i,\infty}$ by $M_i$ and $A_{j,\infty}$ by $M_j$ in the splitting $A_{i,j} \oplus A_{j,\infty} \cong A_{i,\infty}$, to get $\alpha_{i,j} := \psi_i \circ ({\rm id}_{A_{i,j}} \oplus \psi_j^{-1})$. To ensure that the definition of $\alpha_{i,j}$ does not depend on the particular point in $\scr W_j$ at which we consider the isomorphism, let us investigate the transition function at two points $y$ and $z$ in $\scr W_j$:
        \[ \alpha_{i,j}|_z^{-1} \circ \alpha_{i,j}|_y = ({\rm id}_{A_{i,j}} \oplus \psi_j|_z) \circ \psi_i|_z^{-1} \circ \psi_i|_y \circ ({\rm id}_{A_{i,j}} \oplus \psi_j|_y^{-1}). \]
        By property {\sc ii}, the map above can be further rewritten as
        \[ {\rm id}_{A_{i,j}} \oplus (\psi_j|_z \circ \psi_j|_z^{-1} \circ \psi_j|_y \circ \psi_j|_y^{-1}) = {\rm id}|_{A_{i,j} \oplus M_j}, \]
        as desired. The associativity condition also follows from a simple computation:
        \[ \begin{aligned}
            \alpha_{i,j} \circ ({\rm id}_{A_{i,j}} \oplus \alpha_{j,k}) &= \psi_i \circ ({\rm id}_{A_{i,j}} \oplus \psi_j^{-1}) \circ \left[{\rm id}_{A_{i,j}} \oplus \big(\psi_j \circ ({\rm id}_{A_{j,k}} \oplus \psi_k^{-1})\big)\right] \\
            &= \psi_i \circ ({\rm id}_{A_{i,k}} \oplus \psi_k^{-1}) = \alpha_{i,k}.
        \end{aligned} \]
        Finally, we define $\alpha_{i,\infty} : A_{i,\infty} \oplus M_\infty \overset\sim\to M_i$ as simply $\psi_i$ itself. The associativity condition for $k = \infty$ simply follows from the definition of $\alpha_{i,j}$.
    \end{proof}

    \begin{proof}[Proof of {\sc Thm.\,\ref{thm:fr}}]
        Throughout the proof, the indices $i,j,k$ run through $I \sqcup \{\infty\}$. Multiplying $\alpha_{i,j} : A_{i,j} \oplus M_{j} \overset\sim\to M_{i}$ of {\sc Cor.\,\ref{cor:relstfr}} by $(-1)$, and adding $N_i \oplus N_j$ to both sides yields an isomorphism
        \[ \beta_{i,j} : N_i \oplus A_{i,j} \oplus P_j \overset\sim\to P_i \oplus N_j. \]
        Let us set $X_{i,j} := P_{i+1} \oplus \cdots \oplus P_j$ and $Y_{i,j} := P_i \oplus \cdots \oplus P_{j-1}$ with the obvious associative identifications $X_{i,j} \oplus X_{j,k} = X_{i,k}$ and $Y_{i,j} \oplus Y_{j,k} = Y_{i,k}$. Further adding $P_{i+1} \oplus \cdots \oplus P_{j-1}$ to both sides of the map $\beta_{i,j}$ above gives a map $\phi_{i,j}$ with the desired domain and codomain
        \[ \phi_{i,j} : N_i \oplus A_{i,j} \oplus X_{i,j} = N_i \oplus A_{i,j} \oplus P_{i+1} \oplus \cdots \oplus P_{j} \overset\sim\to P_i \oplus \cdots \oplus P_{j-1} \oplus N_j \cong Y_{i,j} \oplus N_j. \]
        However, these $\phi_{i,j}$ do not satisfy the desired commutativity condition \ref{eq:hex} strictly, but rather only up to homotopy. We briefly explain why, and how this can be remedied. The two composites in the square agree strictly on the summands $P_{i+1}, \ldots, P_{j-1}$, as well as $P_{j+1}, \ldots, P_{k-1}$, which were added on artificially; so it suffices to analyze the two composites on the remaining summands:
        \[ \begin{tikzcd}
            N_i \oplus A_{i,j} \oplus P_j \oplus A_{j,k} \oplus P_k \rar["\beta_{i,j} \oplus {\rm id}"]\dar["{\rm mult}"] & P_i \oplus N_j \oplus A_{j,k} \oplus P_k \dar["{\rm id} \oplus \beta_{j,k}"] \\
            N_i \oplus A_{i,k} \oplus P_j \oplus P_k \rar["\beta_{i,k} \oplus {\rm id}"] & P_i \oplus P_j \oplus N_k
        \end{tikzcd} \]
        Further unpacking $P_i = M_i \oplus N_i$, $P_j = M_j \oplus N_j$, and $P_k = M_k \oplus N_k$, the two composites in question agree on the $N$-summands, so we can further simplify the question to checking the commutativity of
        \[ \begin{tikzcd}
            A_{i,j} \oplus M_j \oplus A_{j,k} \oplus M_k \rar["-\alpha_{i,j}\oplus{\rm id}"]\dar["{\rm mult}"] & M_i \oplus A_{j,k} \oplus M_k \dar["{\rm id}\oplus-\alpha_{j,k}"] \\
            A_{i,k} \oplus M_k \oplus M_j \rar["-\alpha_{i,k} \oplus {\rm id}"] & M_i \oplus M_j.
        \end{tikzcd} \]
        There is an obvious homotopy $F^{i,j,k}_{t} = F_t$
        \[ F_t : A_{i,j} \oplus M_j \oplus A_{j,k} \oplus M_k \overset{{\rm id} \oplus -\alpha_{j,k}}{-\!\!\!-\!\!\!-\!\!\!\longrightarrow} A_{i,j} \oplus M_j \oplus M_j \overset{{\rm id} \oplus {\rm Rot}_{{t\pi}/2} \otimes M_j}{-\!\!\!-\!\!\!-\!\!\!-\!\!\!-\!\!\!\longrightarrow} A_{i,j} \oplus M_j \oplus M_j \overset{-\alpha_{i,j} \oplus {\rm id}}{-\!\!\!-\!\!\!-\!\!\!\longrightarrow} M_i \oplus M_j \]
        where ${\rm Rot}_\theta \in {\rm GL}(\bb R^2)$ is the rotation matrix by angle $\theta$. Indeed, $F_0$ recovers the right-down composite in the square, because the two $\alpha$-maps work in parallel, whereas $F_1$ recovers the down-right composite, due to the associativity of the $\alpha$-maps. The way to fix the maps $\phi_{i,j}$ in order that the commutativity \ref{eq:hex} might be strictly satisfied is to ``push in'' the maps $\phi_{i,j}$ in the interior of the moduli spaces $\scr M_{i,j}$, and make extra room in order to incorporate the homotopy $F_t$ above into the map itself. The choices involved in this process are:
        \begin{itemize}
            \item[\sc i.] a collar neighborhood $\partial^j\scr M_{i,k} \times [0, 2] \to \scr M_{i,k}$ prescribing a coordinate $t \in [0, 2]$ near the $j$-boundary;
            \item[\sc ii.] a trivialization of $A_{i,k}$ on the collar neighborhood, relative to the boundary inclusion $\{0\} \subset [0,2]$, i.e. isomorphisms $A_{i,k}|_{(x, t)} \overset\sim\to A_{i,k}|_{(x,0)}$.
        \end{itemize}
        One then defines a push-off denoted $\scr P^j\phi_{i,k}$ which agrees with $\phi_{i,k}$ away from the collar neighborhood, and takes values
        \[ \scr P^j\phi_{i,k}(x,t) := \begin{cases}
            F^{i,j,k}_t(x) & \text{if } t \in [0,1] \\
            \phi_{i,k}(x, 2t-2) & \text{if } t \in [1,2]
        \end{cases} \]
        on the collar neighborhood. This push-off procedure $\scr P^j$ makes the $\phi_{i,j}, \phi_{j,k}, \phi_{i,k}|_{\partial^j}$ satisfy the compatibility strictly, rather than up to homotopy. Finally, we must explain how to fix \emph{all} the $\phi$'s simultaneously, so that all the constraints are strictly satisfied. For this, one must impose suitable compatibility conditions on the choices {\sc i} and {\sc ii} between various boundaries $\partial^j$; namely that the collar neighborhoods of {\sc i} restrict unambiguously to multi-collar neighborhoods $\partial^J \scr M_{i,k} \times [0,2]^J \to \scr M_{i,k}$ for any subset $J$ of indices, and that the trivializations of {\sc ii} commute, i.e. are independent in the order in which we perform them, to get from an interior point to the $\partial^J$-stratum. Then, since the rotations in the definition of $F_t$ are performed independently for different $j, j'$, one can redefine $\phi_{i,k}$ by applying all the push-off operations $\scr P^j$ for $i < j < k$ unambiguously to $\phi_{i,k}$. We note that the space of choices of data {\sc i} and {\sc ii} satisfying the desired compatibility conditions is contractible, which is proved by standard techniques.
    \end{proof}

    \begin{COR}\label{cor:fr}
        Let $\iota_{i,j} : \scr M_{i,j} \hookrightarrow \scr C_i \times \bb E_{i,j}$ be a coherent system of embeddings, see {\sc Def.\,\ref{def:cohemb}}, of the Morse-Bott flow category $\scr F^{\rm augm}_{\rm MB}$ of {\sc\S\ref{sec:smooth}}. After stabilizing the embeddings by certain vector spaces $X_{i,j}$, there is a contractible parameter-space of stable normal framings of $\scr F^{\rm augm}_{\rm MB}$ relative to $\xi_i := T_i^-$ and $\xi_\infty := 0$, see {\sc Def.\,\ref{def:snf}}.
    \end{COR}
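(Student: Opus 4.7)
The plan is to reduce directly to {\sc Thm.\,\ref{thm:fr}}, applied with $A_{i,j}$ taken as the normal bundles of the given coherent embeddings $\iota_{i,j}$ (including $\iota_{i,\infty} : \scr W_i \hookrightarrow \scr C_i \times \bb E_{i,\infty}$), with $N_i := T_i^-$ (and $N_\infty := 0$), and with $P_i := \bb E_{i,\infty}$ regarded as an abstract vector space (and $P_\infty := 0$).

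First, I verify the hypotheses of {\sc Thm.\,\ref{thm:fr}}. The associative splittings $A_{i,j} \oplus A_{j,k} \cong A_{i,k}|_{\partial^j}$ for $i<j<k$ in $I$ are furnished by {\sc Rmk.\,\ref{rmk:normadd}}, whose tangent-space argument applies verbatim when $k = \infty$, using the coherence decomposition $\bb E_{i,\infty} = \bb E_{i,j} \times \bb R_{\ge 0}^{\{j\}} \times \bb E_{j,\infty}$. To identify $M_i := A_{i,\infty}|_{\scr C_i}$, I exploit the local description from {\sc Thm.\,\ref{thm:sm}}: near $\scr C_i \subset \scr W_i$, the space $\scr W_i$ coincides with the descending disk bundle $D_i^- \to \scr C_i$, so $T\scr W_i|_{\scr C_i} = T\scr C_i \oplus T_i^-$. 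Since $\iota_{i,\infty}$ has the source map as its first coordinate, restricting the normal-bundle equation $T\scr W_i \oplus A_{i,\infty} \cong T\scr C_i \oplus T\bb E_{i,\infty}$ to $\scr C_i$ cancels the $T\scr C_i$-factors and leaves $M_i \oplus T_i^- \cong \bb E_{i,\infty}$ as a trivial bundle, supplying the required $M_i \oplus N_i \cong P_i$.

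Applying {\sc Thm.\,\ref{thm:fr}} then yields vector spaces $X_{i,j}, Y_{i,j}$ and bundle isomorphisms $\phi_{i,j} : T_i^- \oplus A_{i,j} \oplus X_{i,j} \overset\sim\to Y_{i,j} \oplus T_j^-$ satisfying the hexagon \ref{eq:hex}, over a contractible parameter-space. I stabilise the embeddings by replacing $\iota_{i,j}$ with $\iota_{i,j} \times 0 : \scr M_{i,j} \hookrightarrow \scr C_i \times \bb E_{i,j} \times X_{i,j}$, whose normal bundle is $A_{i,j} \oplus X_{i,j}$; coherence in the sense of {\sc Def.\,\ref{def:cohemb}} persists because $X_{i,k} = X_{i,j} \oplus X_{j,k}$ associatively. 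Setting $V_{i,j} := Y_{i,j}$, $\xi_i := T_i^-$, $\xi_\infty := 0$, the $\phi_{i,j}$ now form a stable normal framing in the sense of {\sc Def.\,\ref{def:snf}}: the hexagon \ref{eq:hex} is precisely the compatibility \ref{eq:snf}, once one identifies $(A_{i,j} \oplus X_{i,j})|_{\partial^u}$ with $(A_{i,u} \oplus X_{i,u}) \oplus (A_{u,j} \oplus X_{u,j})$; and contractibility is inherited from {\sc Thm.\,\ref{thm:fr}}. The only nontrivial step is the geometric identification $M_i \oplus T_i^- \cong \bb E_{i,\infty}$ above, which relies on the local description of $\scr W_i$ near the constant-path locus; all remaining details are formal manipulations packaged inside {\sc Thm.\,\ref{thm:fr}}.
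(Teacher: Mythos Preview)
Your proof is correct and follows essentially the same route as the paper: set $A_{i,j}$ equal to the normal bundles (invoking {\sc Rmk.\,\ref{rmk:normadd}}), take $N_i = T_i^-$ and $P_i = T\bb E_{i,\infty}$, verify $M_i \oplus N_i \cong P_i$ via the local identification $\scr W_i \supset D_i^-$ near $\scr C_i$ together with the fact that $\iota_{i,\infty}$ has first coordinate the source map, and then apply {\sc Thm.\,\ref{thm:fr}}. The paper phrases the key identification $M_i \oplus T_i^- \cong \bb E_{i,\infty}$ as a splitting of the short exact sequence $0 \to T(D_i^-|_p) \to T\bb E_{i,\infty} \to A_{i,\infty}|_p \to 0$ coming from the fiberwise embedding $D_i^-|_p \hookrightarrow \{p\} \times \bb E_{i,\infty}$, which is precisely your ``cancellation of $T\scr C_i$-factors'' spelled out pointwise.
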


    \begin{proof}
        Let $A_{i,j}$ be the normal bundles to the embeddings $\iota_{i,j}$, with associative identifications $A_{i,j} \oplus A_{j,k} = A_{i,k}$ as explained in {\sc Rmk.\,\ref{rmk:normadd}}. Further, let $N_i := T_i^-$, $P_i := T\bb E_{i,\infty}$. To see why $M_i \oplus N_i \cong P_i$, note that since the embedding $\scr W_i = \scr M_{i,\infty} \hookrightarrow \scr C_i \times \bb E_{i,\infty}$ respects the source map $\scr W_i \to \scr C_i$, it follows that, when restricted to the neighborhood $D_i^- \subset \scr W_i$ of the subspace $\scr C_i \subset \scr W_i$ of constant flow-lines, the embedding $\iota_{i,j}$ maps the fibers $D_i^-|_p$ inside $\{p\} \times \bb E_{i,\infty}$ for all $p \in \scr C_i$; so the normal bundle $A_{i,\infty}$ is equal to the normal bundle of $D_i^-|_p \subset \{p\} \times \bb E_{i,\infty}$, i.e. it fits into a short-exact sequence
        \[ 0 \to T(D_i^-|_p) \to T\bb E_{i,\infty} \to A_{i,\infty}|_p \to 0. \]
        Splitting this short-exact sequence gives the desired $M_i \oplus N_i = P_i$. Now, applying {\sc Thm.\,\ref{thm:fr}} to these data produces the stable normal framings
        \[ \phi_{i,j} : \xi_i \oplus (A_{i,j} \oplus X_{i,j}) \overset\sim\to V_{i,j} \oplus \xi_j \]
        of the stabilization of the embeddings $\iota_{i,j}$ by $X_{i,j}$, relative $\xi_i = T_i^-, \xi_\infty = 0$ and $V_{i,j} := Y_{i,j}$.
    \end{proof}

    \begin{RMK}\label{rmk:other}
        One can slightly modify the definitions of $A_{i,j}, N_i, P_i$ of {\sc Thm.\,\ref{thm:fr}}, to obtain other important results. For instance, if $\scr F_{\rm MB}$ is classical, i.e. all $\scr C_i$ are points, then the bundles $M_i$ are already trivial, and so there is no need to add $N_i$ to trivialize them; instead, one can set $N_i = 0$, and obtain a simpler notion of stable framing, where all the $\xi_i$ are zero. Another important application follows if we replace $A_{i,\infty}$ with $A_{i,\infty} \oplus {\rm ev}^* F$, for some vector bundle $F$ on $M$. Then, {\sc Thm.\,\ref{thm:fr}} still gives a stable normal framing of $\scr F_{\rm MB}$, although it does not extend over $\scr F_{\rm MB}^{\rm augm}$ immediately, because the isomorphism $\phi_{i,\infty} : \xi_i \oplus A_{i,\infty} \oplus X_{i,\infty} \oplus F \overset\sim\to Y_{i,\infty} \oplus 0$ does not fit the definition of stable normal framing, as ${\rm ev}^*F$ is not necessarily trivial. To this end, let us choose a complement bundle $E$ on $M$, with a trivialization of $E \oplus F$. Then, adding $E$ to both sides, one obtains an isomorphism
        \[ \phi_{i,\infty} : \xi_i \oplus (A_{i,\infty} \oplus X_{i,\infty}) \oplus (E \oplus F) \overset\sim\to Y_{i,\infty} \oplus E. \]
        So, following the proof of {\sc Cor.\,\ref{cor:fr}}, after stabilizing the embeddings $\iota_{i,j}$ by $X_{i,j}$ when $j < \infty$, and by $X_{i,\infty} \oplus (E \oplus F)$ when $j = \infty$, we obtain stable normal framings of $\scr F_{\rm MB}^{\rm augm}$ relative $\{\xi_i\}_{i \in I}$ from before, and now $\xi_\infty = E$ instead of $\xi_\infty = 0$.
    \end{RMK}

    \begin{DEF}\label{def:can}
        We call a stable normal framing of $\scr F_{\rm MB}$ (relative arbitrary bundles $\xi_i$) \keywd{$E$-twisted}, if it extends over $\scr F_{\rm MB}^{\rm augm}$ with $\xi_\infty = E$. {\sc Cor.\,\ref{cor:fr}} and its extension {\sc Rmk.\,\ref{rmk:other}}, guarantee the existence of such $E$-twisted framings for any $E$, and even offer a contractible parameter-space of such choices, which we refer to as the \emph{canonical} $E$-twisted framings. Since stabilizing all the $\xi_i$ by a common vector space will not change the stable CJS construction, see {\sc Def.\,\ref{def:cjs}} ahead, we can talk about $E$-twisted framings for a reduced $K$-theory class $[E] \in \widetilde{KO}(M)$, as we did in the Introduction.
    \end{DEF}

    We end this section by showing that \emph{every} stable normal framing of $\scr F_{\rm MB}$ is $E$-twisted for some vector bundle $E$, after suitable stabilization. The first step is to figure out what the right bundle $E$ is, which we accomplish in the following definition:
    \begin{DEF}\label{def:asb}
        Consider a stable normal framing of $\scr F_{\rm MB}$, following the same notations as in {\sc Def.\,\ref{def:snf}}. After potentially stabilizing the coherent system of embeddings, one can extend it to $\scr F_{\rm MB}^{\rm augm}$; we also choose normal bundles $N_{i,j}$ for all $i < j$ in $I \sqcup \{\infty\}$, as in {\sc Rmk.\,\ref{rmk:normadd}}. Then, there is a vector bundle on the disjoint union $\coprod_{i \in I} \scr W_i$, given by $V_{1,i} \oplus \xi_i \oplus N_{i,\infty}$ on the $i^{\rm th}$ summand. We can descend this bundle along the disjoint union of the evaluation maps $\coprod_{i \in I} \scr W_i \overset{\rm ev}\to M$ at the free endpoints, by defining the following quotient:
        \[ E := \coprod_{i \in I} {V_{1,i} \oplus \xi_i \oplus N_{i,\infty}} \,\Big/\!\sim, \]
        where the equivalence relation identifies fibers in the $\partial^j$-boundary of the $i^{\rm th}$ summand to fibers in the $j^{\rm th}$ summand, via the stable normal framings
        \[ V_{1,i} \oplus \xi_i \oplus N_{i,\infty} = V_{1,i} \oplus \xi_i \oplus N_{i,j} \oplus N_{j,\infty} \overset{\phi_{i,j}}\longrightarrow V_{1,i} \oplus V_{i,j} \oplus \xi_j \oplus N_{j,\infty} = V_{1,j} \oplus \xi_j \oplus N_{j,\infty}. \]
        The equivalence relation is transitive precisely because of the boundary compatibility condition \ref{eq:snf} imposed on $\phi_{i,j}$. This defines a vector bundle on the original manifold $M$, which we call the \emph{associated bundle} to the stable normal framing.
    \end{DEF}
    \begin{THM}\label{thm:ofr}
        Any stable normal framing of the flow category $\scr F_{\rm MB}$ constructed in {\sc\S\ref{sec:smooth}} for a generic Morse-Bott flow becomes $E$-twisted after suitable stabilization, where $E$ is the associated bundle to the framing, introduced in {\sc Def.\,\ref{def:asb}}.
    \end{THM}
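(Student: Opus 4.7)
The starting point is the parenthetical remark in {\sc Def.\,\ref{def:asb}}: after suitably stabilizing the given coherent system of embeddings we may extend it to $\scr F^{\rm augm}_{\rm MB}$, yielding normal bundles $N_{i,\infty}$ on the $\scr W_i$. By the construction of {\sc Def.\,\ref{def:asb}}, the associated bundle $E$ on $M$ comes equipped, on each $\scr W_i$, with a tautological isomorphism $\tau_i : V_{1,i} \oplus \xi_i \oplus N_{i,\infty} \overset\sim\to {\rm ev}^* E$, whose restriction to $\partial^j \scr W_i$ factors as $\tau_j \circ ({\rm id}_{V_{1,i}} \oplus \phi_{i,j} \oplus {\rm id}_{N_{j,\infty}})$, directly from the descent datum used to build $E$.

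With this in hand, the plan is to set $\tilde\xi_i := \xi_i \oplus V_{1,n}$ for $i \in I$ and $\xi_\infty := E$; to take $V_{i,\infty} := V_{i,n}$ for $i < n$ together with $V_{n,\infty} := 0$ (the required associativity $V_{i,u} \oplus V_{u,\infty} = V_{i,\infty}$ is immediate from $V_{i,u} \oplus V_{u,n} = V_{i,n}$ in the original framing); and to define the stabilized bridge framings $\tilde\phi_{i,j}$ (for $i < j$ in $I$) as $\phi_{i,j}$ extended by the identity on $V_{1,n}$. A naive candidate $\phi_{i,\infty}^{(0)}$ for the new framings is then obtained by splitting $V_{1,n} = V_{1,i} \oplus V_{i,n}$, rearranging $\tilde\xi_i \oplus N_{i,\infty}$ into $V_{i,n} \oplus (V_{1,i} \oplus \xi_i \oplus N_{i,\infty})$, and applying ${\rm id}_{V_{i,n}} \oplus \tau_i$ to land in $V_{i,\infty} \oplus E$.

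This naive $\phi_{i,\infty}^{(0)}$ satisfies the compatibility \ref{eq:snf} on each $\partial^j \scr W_i$ only up to a specific homotopy: a direct expansion using the boundary behavior of $\tau_i$ shows the discrepancy with the composite of $\tilde\phi_{j,\infty}^{(0)}$ and $\tilde\phi_{i,j}$ amounts to interchanging two copies of $V_{i,j}$, one in the output $V_{i,\infty}$-summand and one sitting inside the $V_{1,j} \subset V_{1,n}$-argument fed into $\tau_j$, with all other summands untouched. This interchange is isotopic to the identity via a $\pi/2$-rotation in the $V_{i,j} \oplus V_{i,j}$-plane, provided the dimension is even, which can be arranged by one further doubling stabilization if $\dim V_{i,j}$ happens to be odd. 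The final $\phi_{i,\infty}$ is then obtained by splicing this rotation homotopy into $\phi_{i,\infty}^{(0)}$ across a compatible family of multi-collar neighborhoods of the $\partial^J \scr W_i$, exactly as in the push-off procedure at the end of the proof of {\sc Thm.\,\ref{thm:fr}}; as there, the parameter space of compatible choices is contractible.

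The main obstacle lies in this last strictification step: isolating the discrepancy precisely as the $V_{i,j}$-swap described above, handling the parity of $\dim V_{i,j}$ by the additional doubling stabilization if needed, and then arranging the rotation homotopies coherently across all multi-strata by a consistent choice of multi-collar neighborhoods, so that the resulting $\{\phi_{i,\infty}\}$ fit together associatively with the $\{\tilde\phi_{i,j}\}$ to form a genuine $E$-twisted stable normal framing of $\scr F^{\rm augm}_{\rm MB}$ in the sense of {\sc Def.\,\ref{def:snf}}.
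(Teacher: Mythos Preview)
Your approach is essentially the same as the paper's, with one small but meaningful difference worth noting. The paper defines $\phi_{i,\infty} := (-{\rm id}_{V_{i,n}}) \oplus {\rm id}_E$, inserting a sign on the $V_{i,n}$ summand, whereas you take the unsigned ${\rm id}_{V_{i,n}} \oplus \tau_i$. With the sign, the discrepancy on $\partial^j\scr W_i$ is exactly $\left[\begin{smallmatrix}0&1\\-1&0\end{smallmatrix}\right] \otimes {\rm id}_{V_{i,j}}$, a genuine rotation, so the interpolation ${\rm Rot}(-t\pi/2) \otimes {\rm id}_{V_{i,j}}$ works regardless of the parity of $\dim V_{i,j}$ and, crucially, decomposes block-diagonally under $V_{i,j} = V_{i,u_1} \oplus \cdots \oplus V_{u_k,j}$, which is what makes the multi-collar push-off coherent across all higher strata without further effort.

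Your unsigned version produces the swap $\left[\begin{smallmatrix}0&1\\1&0\end{smallmatrix}\right] \otimes {\rm id}_{V_{i,j}}$ instead, which is a reflection, not a rotation; your description of the homotopy as ``a $\pi/2$-rotation'' is therefore not quite right. The doubling stabilization does put the swap in the identity component, but you would then need to exhibit a specific homotopy to the identity that still decomposes additively along $V_{i,j} = V_{i,u} \oplus V_{u,j}$, and this is not automatic for an arbitrary choice. The paper's sign trick sidesteps this entirely and is the cleaner fix; with it, your doubling step is unnecessary and the rest of your argument goes through verbatim.
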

    \begin{proof}
        Let us stabilize the bundles $\xi_i$ by adding $V_{1,n}$ to all of them: $\xi_i' := V_{1,n} \oplus \xi_i$. For $i < j < \infty$, we define the obvious $\phi'_{i,j} := {\rm id}_{V_{1,n}} \oplus \phi_{i,j}$. We will now extend this stable normal framing to $\scr F_{\rm MB}^{\rm augm}$, with $\xi_\infty = E$ the associated bundle, and $V_{i,\infty} = V_{i,n}$ (or equivalently $V_{n,\infty} = 0$). So we must construct maps
        \[ \phi_{i,\infty} : V_{1,n} \oplus \xi_i \oplus N_{i,\infty} \overset\sim\to V_{i,n} \oplus E, \]
        satisfying the boundary compatibility condition coming from \ref{eq:snf}. There is a quite obvious choice of such $\phi_{i,\infty}$, inspired by the definition of $E$. One can decompose the domain as $V_{i,n} \oplus V_{i,1} \oplus \xi_i \oplus N_{i,\infty}$, and use the fact that $E$ pulled back to $\scr W_i$ is identified by definition with the last three terms $V_{1,i} \oplus \xi_i \oplus N_{i,\infty}$. For reasons very similar to those in the proof of {\sc Thm.\,\ref{thm:fr}}, we will introduce a sign on the $V_{i,n}$ summand:
        \[ \phi_{i,\infty} := (-{\rm id}_{V_{i,n}}) \oplus {\rm id}_E : V_{i,n} \oplus (V_{1,i} \oplus \xi_i \oplus N_{i,\infty}) \to V_{i,n} \oplus E. \]
        Also similarly to {\sc Thm.\,\ref{thm:fr}}, the compatibility relation \ref{eq:snf} will only be satisfied up to homotopy, which will have to be fixed by the same pushing-in technique. Concretely, we would like to show that $\phi_{i,\infty}|_{\partial^j}$ is given by the composite
        \[ (V_{j,n} \oplus V_{i,j} \oplus V_{1,i}) \oplus \xi_i \oplus N_{i,j} \oplus N_{j,\infty} \overset{\phi_{i,j}}\to (V_{j,n} \oplus V_{i,j} \oplus V_{1,i}) \oplus V_{i,j} \oplus \xi_j \oplus N_{j,\infty} \overset{\phi_{j,\infty}}\to V_{i,n} \oplus E. \]
        It is not difficult to see that $\phi_{i,\infty}$ is actually given by the composite where we stick in the automorphism $\left[\begin{smallmatrix}0&1\\-1&0\end{smallmatrix}\right] \otimes {\rm id}_{V_{i,j}}$ on the summand $V_{i,j}^{\oplus 2}$ in the middle term. Therefore, one can interpolate between them by using ${\rm Rot}(-t\pi/2) \otimes {\rm id}_{V_{i,j}}$ instead, to witness the homotopy. As in the proof of {\sc Thm.\,\ref{thm:fr}}, on the higher boundary strata $\partial^{u_1, \ldots, u_k, j} \scr W_i$, one can do the rotation independently on each of the summands $V_{i, u_1}, V_{u_1, u_2}, \ldots, V_{u_k,j}$, and change the definition of $\phi_{i,\infty}$ by a very similar kind of push-off.
    \end{proof}

\section{The CJS construction, and recovering Thom spectra on manifolds}\label{sec:cjs}

    In this section we recall the Cohen-Jones-Segal construction, which produces a stable homotopy type from the data of a flow category, see {\sc Def.\,\ref{def:flow}}, together with a coherent system of embeddings $\iota_{i,j}$, see {\sc Def.\,\ref{def:cohemb}} and crucially a stable normal framing thereof, see {\sc Def.\,\ref{def:snf}}. When applied to $\scr F_{\rm MB}$, the flow category associated to a generic Morse-Bott flow $(M, f, \Phi)$ of {\sc Def.\,\ref{exp:F}}, and any set of $E$-twisted stable normal framings, see {\sc Def.\,\ref{def:can}}, for some reduced $K$-theory class $[E] \in \widetilde{KO}(M)$, the CJS construction recovers the stable homotopy type of the Thom spectrum $M^E$, defined as the formal desuspension $\Sigma^{-{\rm rk\;}E} {\rm Th}(E)$ for any actual vector bundle $E$ representing the class $[E]$.
    
    In this section, we restrict ourselves to using the Spanier-Whitehead model which employs formal (de)suspensions $\Sigma^{-n} X$ of pointed spaces ($n \in \bb Z_{>0}$) to represent finite stable homotopy types \cite[\sc\S XII.3]{Whi},\cite{SW}. Unpointed spaces $X$ can be made pointed by one-point compactification $X^+$.

    \begin{DEF}\label{def:ptmaps}
        Given a coherent collection of embeddings $\iota_{i,j}$ of a flow category $\scr F$, following the same notations as in {\sc Defs.\,\ref{def:flow},\,\ref{def:cohemb}}, we define a coherent collection of tubular neighborhoods to be an extension of the $\iota_{i,j}$ to diffeomorphisms
        \[ \iota_{i,j} : N_{i,j} \overset\sim\to U_{i,j} \subset \scr C_i \times \bb E_{i,j}, \]
        where $N_{i,j}$ are normal bundles to the $\scr M_{i,j}$, see {\sc Rmk.\,\ref{rmk:normadd}}, and $U_{i,j}$ are open subsets of the ambient spaces, satisfying the same coherence condition as in {\sc Def.\,\ref{def:cohemb}}.
        In conjunction with stable normal framings $\phi_{i,j} : \xi_i \oplus N_{i,j} \overset\sim\to V_{i,j} \oplus \xi_j$ of the original embeddings, these induce a coherent system of \keywd{Pontrjagyn-Thom collapse maps} ${\rm PT}_{i,j} : {\rm Th}(\xi_i) \wedge \bb E_{i,j}^+ \to V_{i,j}^+ \wedge {\rm Th}(\xi_j)$, defined by mapping via $\phi_{i,j}$ inside the subset $\xi_i \times_{\scr C_i} U_{i,j}$, and mapping to the basepoint everywhere else. These PT-maps satisfy the compatibility condition that the boundary restriction ${\rm PT}_{i,j}|_{\partial^u} : {\rm Th}(\xi_i) \wedge \bb E_{i,u}^+ \wedge \bb E_{u,j}^+ \to V_{i,u}^+ \wedge V_{u,j}^+ \wedge {\rm Th}(\xi_j)$ is equal to 
        \[\numeq\label{eq:ptassoc} {\rm PT}_{i,j}|_{\partial^u} = ({\rm id}_{V_{i,u}^+} \wedge {\rm PT}_{u,j}) \circ ({\rm PT}_{i,u} \wedge {\rm id}_{\bb E_{u,j}^+}). \]
    \end{DEF}

    \begin{PROP}\label{prop:stabcontr}
        As the minimum dimension $N$ of all the $\bb E_{i,j}$ goes to $\infty$, the space parameterizing all the coherent choices of embeddings and tubular neighborhoods of any given flow category becomes non-empty, and in fact its connectivity also goes to $\infty$.
    \end{PROP}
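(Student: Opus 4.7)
The plan is to induct on the length $\ell = j - i$ of each pair $(i,j)$, producing the embedding $\iota_{i,j}$ and its tubular neighborhood $\iota_{i,j} : N_{i,j} \overset\sim\to U_{i,j}$ simultaneously at each stage. First I would normalize by recursively setting $\bb E_{i,j} = \bb E_{i,u} \times \bb R_{\ge 0} \times \bb E_{u,j}$ out of base spaces $\bb F_k := \bb E_{k-1,k}$ chosen of ambient dimension $\ge N$; then the only freedom in $\iota_{i,j}$ lies in its $\bb E_{i,j}$-component $\tilde\iota_{i,j} : \scr M_{i,j} \to \bb E_{i,j}$, and the coherence condition \ref{eq:fibmap} prescribes $\tilde\iota_{i,j}|_{\partial^u \scr M_{i,j}}$ entirely in terms of the smaller-$\ell$ data. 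By {\sc Rmk.\,\ref{rmk:normadd}}, this prescribed boundary map is automatically a smooth embedding, so the task at each step reduces to choosing an interior extension which is itself an embedding, plus an extension of a boundary tubular neighborhood. Let $\scr S_N$ denote the space of all coherent choices; I aim to show $\mathrm{conn}(\scr S_N) \ge N - C(\scr F)$ for some constant depending only on the flow category.

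The key steps of the inductive step are then: \emph{(i)} by a standard extension lemma for smooth maps into a convex target out of the cofibration $\partial \scr M_{i,j} \hookrightarrow \scr M_{i,j}$ of manifolds with corners, the space of smooth extensions $\tilde\iota_{i,j}$ of the boundary data is non-empty and contractible; \emph{(ii)} inside this affine space, the subset for which $(s_{i,j}, \tilde\iota_{i,j})$ is an embedding is open, and its complement has codimension growing linearly in $\dim \bb E_{i,j}$ by a relative Whitney-style general-position argument, applied both to the immersion-failure locus (of codimension $\dim \bb E_{i,j} + \dim \scr C_i - \dim \scr M_{i,j} + 1$) and to the double-point locus (of codimension $\dim \bb E_{i,j} + \dim \scr C_i - 2\dim \scr M_{i,j}$), so the subset of embeddings is highly connected once $N$ is large; \emph{(iii)} given such a valid embedding, the space of tubular neighborhood extensions is contractible by the relative version of Hirsch's tubular neighborhood uniqueness theorem, making the forgetful projection $\scr S_N \twoheadrightarrow \{\text{coherent embeddings}\}$ a Serre fibration with contractible fibers.

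Assembling these stages, each inductive transition preserves non-emptiness and loses connectivity by a bounded amount depending only on $\dim \scr M_{i,j}$ and $\dim \scr C_i$; since $\scr F$ is finite the cumulative loss is a constant $C(\scr F)$, yielding the bound $\mathrm{conn}(\scr S_N) \ge N - C(\scr F)$ and hence both claims. The main obstacle will be executing step \emph{(ii)} in the presence of the full $I$-stratification: the relative general-position argument must be applied stratum-by-stratum, so as not to disturb boundary data already living on lower-dimensional corners $\partial^J \scr M_{i,j}$. This requires iterating the argument over the poset of faces, checking at each stage that a $C^1$-small perturbation supported away from the already-fixed strata suffices to achieve transversality of $1$-jets and of the double-point map. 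Since the face poset of $\scr M_{i,j}$ is finite and each application of general position only loses a bounded amount of connectivity, this is a bookkeeping exercise, but it is the place where the quantitative bounds must be tracked carefully to ensure the linear growth in $N$ survives.
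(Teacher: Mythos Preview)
Your proposal is correct and follows essentially the same approach as the paper, which merely cites ``the Whitney embedding theorem relative to boundary, and the uniqueness of tubular neighborhoods'' and leaves all details to the reader (referring also to \cite{ManSar}). Your inductive scheme on $|j-i|$, the relative general-position argument for step {\sc(ii)}, and the use of tubular neighborhood uniqueness for step {\sc(iii)} are precisely the standard differential-topology techniques the paper alludes to.
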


    \begin{proof}
        The result follows from standard differential topology techniques, particularly the Whitney embedding theorem relative to boundary, and the uniqueness of tubular neighborhoods. We leave the details to the interested reader; \cite{ManSar} also briefly discusses this.
    \end{proof}

    \begin{NOT}
        Define the Euclidean $\{i < u \le j\}$-space $\bb F_{i, j} := \bb E_{i,j} \times \bb R_{\ge 0}^{\{j\}}$ when $i < j$, and $\bb F_{i,i} := \{0\}$. We also let $V_{i,i} = \{0\}$ by convention.
    \end{NOT}

    \begin{DEF}\label{def:cjs}
        The \keywd{unstable Cohen-Jones-Segal construction} associated to the data of {\sc Def.\,\ref{def:ptmaps}} is defined to be the quotient
        \begin{equation*}\numeq\label{eq:cjs}
            {\rm CJS}^u(\scr F, {\rm PT}_{i,j}) := \left(\coprod_{i \in I} V_{\min I,i}^+ \wedge {\rm Th}(\xi_i) \wedge \bb F_{i, \max I}^+\right) \Big/ \sim,
        \end{equation*}
        where we quotient by the equivalence relation which identifies points in the $\partial^j$-boundary of the $i^{\rm th}$ summand to points in the $j^{\rm th}$ summand via the map
        \begin{equation*}\numeq\label{eq:eqrel}
            \begin{gathered}
                V_{\min I,i}^+ \wedge {\rm Th}(\xi_i) \wedge \partial^j\bb F_{i, \max I}^+ \cong V_{\min I,i}^+ \wedge {\rm Th}(\xi_i) \wedge \bb E_{i,j}^+ \wedge \bb F_{j, \max I}^+ \\
                \overset{{\rm id} \wedge {\rm PT}_{i,j} \wedge {\rm id}}\longrightarrow V_{\min I,i}^+ \wedge V_{i,j}^+ \wedge {\rm Th}(\xi_j) \wedge \bb F_{j, \max I}^+ \cong V_{\min I,j}^+ \wedge {\rm Th}(\xi_j) \wedge \bb F_{j, \max I}^+.
            \end{gathered}
        \end{equation*}
        These identifications are transitive on the pairwise intersections $\partial^j \cap \partial^k$, in virtue of \ref{eq:ptassoc}, so we truly get an equivalence relation without having to take transitive closure. We will write ${\rm CJS}^u(\scr F)$ instead of ${\rm CJS}^u(\scr F, {\rm PT}_{i,j})$ for brevity. If $\scr F$ is graded, we similarly define the \keywd{stable Cohen-Jones-Segal construction} ${\rm CJS}(\scr F)$ to be the stable homotopy type obtained by formally desuspending \ref{eq:cjs} by 
        \[\numeq\label{eq:desusp} \dim V_{\min I,\max I}^+ - \mu(\max I) + {\rm rk}\; \xi_{\max I}, \]
        which is an integer (as opposed to a locally constant function on $\scr C_{\max I}$, see the condition imposed in {\sc Def.\,\ref{def:snf}}).

        To understand the relevance of the first term $\dim V_{\min I,\max I}^+$, consider what happens if we stabilize the embeddings $\iota_{i,j}$ by adding a vector space $W$ to all the $\bb E_{i,j}$ and $V_{i,j}$ with $i < u_0 \le j$ for some fixed index $u_0$, extending the respective tubular neighborhoods $U_{i,j}$ by taking Cartesian product with $W$, and similarly extending the framings $\phi_{i,j}$ by the identity map on the $W$-factor: the maps \ref{eq:eqrel} also change as a result of the stabilization, by being smashed with the identity map $W^+ \to W^+$, so that the unstable CJS construction is also smashed with $W^+$. In particular, the \emph{stable} CJS construction remains unchanged (up to isomorphism in the Spanier-Whitehead category) if we stabilize the embeddings. Similarly, stabilizing all the $\xi_i$ by a common vector space $W$ results in an overall suspension of the unstable CJS construction by $W^+$, which explains the term ${\rm rk}\; \xi_{\max I}$. The last term $-\mu(\max I)$ is there to make the homology of the stable CJS construction agree with the Morse homology, see {\sc Prop.\,\ref{prop:hom}} below.
    \end{DEF}

    \begin{EXP}
        If $\scr F$ is a flow category with only one critical locus $\scr C$, stable normal framing relative $\xi$, and grading $\mu$, then the stable CJS construction is isomorphic to $\Sigma^{\mu - {\rm rk\;} \xi} {\rm Th}(\xi)$.
    \end{EXP}

    \begin{EXP}
        If, instead, $\scr F$ is a flow category with only two critical \emph{points} $p < q$, stably framed manifold $\scr M_{p,q}$ of dimension $n = \mu(p) - \mu(q) - 1$, then the stable CJS construction is isomorphic to the cone on $[\scr M_{p,q}] \in \Omega^{\rm fr}_n$, viewed as a stable map $S^{\mu(p) - 1} \to S^{\mu(q)}$.
    \end{EXP}

    \begin{PROP}[Homotopy invariance]\label{prop:hinv}
        The stable Cohen-Jones-Segal construction is stable homotopy-invariant under homotopies of the data of {\sc Def.\,\ref{def:ptmaps}}.
    \end{PROP}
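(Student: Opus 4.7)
The plan is to build an interpolating (``cylindrical'') CJS construction out of the homotopy, and show that its endpoint slices include as homotopy equivalences. Let $\{D^t = (\iota^t_{i,j}, U^t_{i,j}, \phi^t_{i,j})\}_{t \in [0,1]}$ denote the given homotopy of the data of {\sc Def.\,\ref{def:ptmaps}}. By inspection of how the PT collapse maps are manufactured from the data, this induces a family $\{{\rm PT}^t_{i,j}\}_{t \in [0,1]}$, continuous in $t$, satisfying the boundary associativity \ref{eq:ptassoc} at every parameter $t$ separately.

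From this family, I would assemble an interpolating unstable construction
\[ {\rm CJS}^u_H := \left(\coprod_{i \in I} V_{\min I, i}^+ \wedge {\rm Th}(\xi_i) \wedge \bb F_{i, \max I}^+ \wedge [0,1]^+\right) \Big/ \sim, \]
where the equivalence relation identifies a point $(p, t)$ lying in the $\partial^j$-stratum of the $i$-th summand with its image under ${\rm id} \wedge {\rm PT}^t_{i,j} \wedge {\rm id}$ inside the $j$-th summand, at the \emph{same} parameter $t$. Transitivity on multi-index overlaps $\partial^j \cap \partial^k$ follows from \ref{eq:ptassoc} applied at each $t$, exactly as in {\sc Def.\,\ref{def:cjs}}, so this is a genuine equivalence relation. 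Slicing at $t = \epsilon$ recovers ${\rm CJS}^u(\scr F, {\rm PT}^\epsilon)$ for $\epsilon = 0, 1$, and the claim reduces to showing that both slice-inclusions $j_0, j_1$ are homotopy equivalences.

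For this, I would exhibit an explicit deformation retraction of ${\rm CJS}^u_H$ onto the $t = 0$ slice. The naive slide $(x, t) \mapsto (x, (1-s)t)$ is ill-defined on a $\partial^j$-stratum, since it would pass through a continuously varying sequence of $t$-dependent identifications. Exploiting the canonical collar structure $\partial^j\bb F_{i, \max I} \times \bb R_{\ge 0} \hookrightarrow \bb F_{i, \max I}$ coming from the Euclidean $I$-space structure, I would simultaneously slide $t$ toward $0$ and push points inward off the boundary, arranging that by the time $t$ drops to $0$ the point has already exited the collar, so no identification is actually triggered during the sliding. Off a neighborhood of all the boundary strata, the retraction is simply the linear slide in $t$; the two pieces match on the overlap.

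The main obstacle I expect is coordinating these boundary-retractions coherently on multi-boundary strata $\partial^{\{u_1, \ldots, u_k\}}$, where several PT-identifications interact simultaneously. My approach would be an induction on $|I|$: the case $|I| = 1$ is immediate since no identifications occur, and for the inductive step I would filter ${\rm CJS}^u_H$ by the largest index used, realizing it as a mapping-cone-like construction over the analogous cylindrical object for $I \setminus \{\max I\}$, to which the inductive hypothesis applies. The multi-collar structure inherent in the Euclidean $I$-space $\bb F_{i, \max I} = \bb E_{i,\max I} \times \bb R_{\ge 0}^{\{\max I\}}$ (and its iterated boundary-factorizations using $\bb E_{i,j} = \bb E_{i,u} \times \bb R_{\ge 0}^{\{u\}} \times \bb E_{u,j}$) assembles the per-index retractions unambiguously. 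The stable statement then follows at once, since the desuspension shift \ref{eq:desusp} depends only on the Maslov grading, the ranks of the $\xi_i$, and the dimensions of the $V_{i,j}$, which are all constant through the homotopy.
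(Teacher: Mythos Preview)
Your cylindrical construction ${\rm CJS}^u_H$ is exactly the parametric CJS construction the paper builds (with parameter space $K = [0,1]$), so the setups agree. The divergence is in how to show that the slice inclusions $j_\epsilon : {\rm CJS}^u(\scr F, {\rm PT}^\epsilon) \hookrightarrow {\rm CJS}^u_H$ are equivalences.

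The paper does \emph{not} attempt an explicit deformation retraction. Instead, it uses the filtration $X_j := \bigcup_{i \ge j}(\text{$i$-th summand})$ of ${\rm CJS}^u_H$ and of each slice. The point is that on each associated graded piece
\[
X_i / X_{i+1} \;\cong\; V_{\min I, i}^+ \wedge {\rm Th}(\xi_i) \wedge \bigl(\bb F_{i, \max I}/\textstyle\bigcup_j \partial^j\bigr)^+ \wedge K^+
\]
the parameter factor $K^+$ splits off completely, because all of the ${\rm PT}$-identifications have been quotiented away. Hence the inclusion $\{\epsilon\}^+ \hookrightarrow [0,1]^+$ induces a homotopy equivalence on each graded piece, and an inductive 5-lemma on the long exact sequences in $\pi^{\rm st}_*$ of the cofiber sequences $X_{i+1} \to X_i \to X_i/X_{i+1}$ finishes the argument. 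No collar manipulation is needed.

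Your proposed explicit retraction is not wrong in spirit, but it is substantially more laborious, and the part you flag as the ``main obstacle'' --- coherence on multi-strata $\partial^{\{u_1,\ldots,u_k\}}$ --- is genuinely delicate: a point there has been identified through a $t$-dependent composite of $k$ different PT maps, and your simultaneous slide-and-push must be compatible with every sub-composite. Your inductive step (``filter by the largest index used, realizing it as a mapping-cone-like construction'') is in fact converging toward the paper's filtration; once you are there, it is simpler to abandon the explicit retraction and argue on associated graded pieces via the 5-lemma, since the graded quotients visibly factor as $(\text{something}) \wedge K^+$. That is the shortcut the paper takes.
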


    \begin{proof}
        There are two key observations from which this result easily follows. One is that the CJS construction can be done parametrically; that is, if there is a compact parameter space $K$ parameterizing embeddings and tubular neighborhoods $\iota_{i,j}$, and framings $\phi_{i,j}$, then we may simply add a factor of $K^+$ in every summand, and have the ${\rm PT}_{i,j}$ depend on $K$:
        \[\left(\coprod_{i \in I} V_{\min I,i}^+ \wedge {\rm Th}(\xi_i) \wedge \bb F_{i, \max I}^+ \wedge K^+\right) \Big/ \sim,\]
        Denote this resulting space by $X$.
        
        The second observation is that $X$ admits a natural filtration
        \begin{equation*}\numeq\label{eq:cjsfilt}
            X = X_{\min I} \supset \cdots \supset X_i \supset \cdots X_{\max I} \supset X_{\max I + 1} := 0
        \end{equation*}
        where $X_j$ is the union of only those summands corresponding to $i \ge j$. The filtration has associated graded $X_i / X_{i+1} \cong V_{\min I,i}^+ \wedge {\rm Th}(\xi_i) \wedge (\bb F_{i, \max I}/\bigcup_j\partial^j)^+ \wedge K^+$. Also, this filtration respects inclusions $K' \subset K$ of parameter spaces. In particular, if $K' \hookrightarrow K$ is a homotopy equivalence, the induced maps on the level of graded quotients $X_i/X_{i+1}$ are also homotopy-equivalences. From here, the standard inductive 5-lemma argument applied to the long-exact sequences on $\pi^{\rm st}_*$ induced by the cofiber sequences $X_i \to X_{i+1} \to X_{i+1}/X_i$ shows that the two parametric CJS constructions are homotopy equivalent. The result now follows by taking $K = [0, 1]$ and $K' = \{0\}$ or $K' = \{1\}$.
    \end{proof}

    In virtue of {\sc Prop.\,\ref{prop:stabcontr}} and {\sc Cor.\,\ref{cor:fr}} (or its generalizations in {\sc Rmk.\,\ref{rmk:other}}), this shows that the stable CJS construction associated to the flow category $\scr F_{\rm MB}$ of {\sc Ex.\,\ref{exp:F}}, using the canonical $E$-twisted framings, see {\sc Def.\,\ref{def:can}}, is independent of the extra choices. Before proving our main result, we show that for any flow category $\scr F$ with extra data as in {\sc Def.\,\ref{def:ptmaps}}, the homology $H_*({\rm CJS}(\scr F); \bb Z)$ agrees, at least in the classical case, with the homology of the Morse complex abstractly associated to the flow category $\scr F$:

    \begin{PROP}\label{prop:hom}
        If $\scr F$ is a classical flow category (i.e. so that $\scr C_i = p_i$ are points), equipped with a stable normal framing, then the unstable CJS construction is naturally homeomorphic to a CW complex. Further, if $\scr F$ is equipped with a Maslov grading $\mu$, then the cellular complex $C^{\rm cell}_*$ of the stable CJS construction, defined as the negative shift of the cellular complex of the unstable CJS construction by \ref{eq:desusp}, is isomorphic to the Morse complex $C^{\rm Morse}_*$ defined by $C^{\rm Morse}_k := \bb Z\<p_i : \mu(i)=k\>$, with
        \[ d(p_i) = \sum_{\mu(j) = \mu(i)-1} (\# \scr M_{i,j}) \cdot p_j, \]
        where $\#$ indicates the signed count with respect to the orientation on $\scr M_{i,j}$ coming from the data of their stable normal framings.
    \end{PROP}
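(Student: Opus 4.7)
The plan is to exhibit a natural CW structure on the unstable CJS construction with a single $c_i$-cell per critical point $p_i$ (plus a 0-cell for the basepoint), skeletally filtered by Morse index, and then identify its cellular chain complex with the Morse complex by computing degrees of Pontrjagin--Thom collapses.

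For each $i \in I$, I would recognize the summand $Y_i := V_{\min I, i}^+ \wedge S^{\dim \xi_i} \wedge \bb F_{i, \max I}^+$ as the one-point compactification of $V_{\min I, i} \times \xi_i \times \bb F_{i, \max I}$, whose top-dimensional open stratum is an open $c_i$-cell with $c_i := \dim V_{\min I, i} + \dim \xi_i + \dim \bb F_{i, \max I}$. A short direct computation, using the framing isomorphism $\xi_{\min I} \oplus N_{\min I, i} \cong V_{\min I, i} \oplus \xi_i$, the dimension identity $\dim N_{\min I, i} = \dim \bb E_{\min I, i} - (\mu(\min I) - \mu(i) - 1)$, and the additivity of $\dim \bb E$ under composition, yields $c_i = \mu(i) + D$ where $D := \dim V_{\min I, \max I} + \dim \xi_{\max I} - \mu(\max I)$ is precisely the desuspension amount \ref{eq:desusp}. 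The filtration $X_{\min I} \supset \cdots \supset X_{\max I+1} = \{*\}$ from the proof of {\sc Prop.\,\ref{prop:hinv}} has successive quotients $X_i/X_{i+1} \cong Y_i/\partial Y_i \cong S^{c_i}$, exhibiting the individual $c_i$-cells. The attaching map of the $c_i$-cell is then the quotient map $\partial Y_i \to X_{i+1}$, which on each $\partial^j$-face ($i < j \le \max I$) factors through $Y_j$ via ${\rm PT}_{i,j}$, and on the ``points at infinity'' is constant at the basepoint.

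The key observation making this into a genuine CW complex is that ${\rm PT}_{i,j}$ is the constant basepoint map whenever $\scr M_{i,j} = \varnothing$, which by the Maslov condition ($\dim \scr M_{i,j} = \mu(i) - \mu(j) - 1$) happens precisely when $\mu(j) \ge \mu(i)$. Hence the attaching map of the $c_i$-cell only hits cells of strictly smaller dimension, and re-indexing the filtration by Morse index yields a CW filtration $Z_k \supset Z_{k-1}$ with $Z_k/Z_{k-1}$ a wedge of $(k+D)$-spheres indexed by critical points of Morse index $k$.

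For the cellular differential of $[p_i]$, only $j$ with $c_j = c_i - 1$, i.e. $\mu(j) = \mu(i) - 1$, contribute; the coefficient of $[p_j]$ is the degree of the composite $S^{c_i-1} \to Z_{\mu(i)-1} \twoheadrightarrow Z_{\mu(i)-1}/Z_{\mu(i)-2} \twoheadrightarrow S^{c_j}$. Contributions from $\partial^k$-strata with $k \ne j$ are killed by the final projection, since they either land in top cells of the quotient different from the $j$-factor or are collapsed by PT-triviality; the remaining $\partial^j$-contribution, after unwinding the framings and tubular-neighborhood trivializations, is a (suspension of the) classical Pontrjagin--Thom collapse for the framed compact $0$-manifold $\scr M_{i,j} \hookrightarrow \bb E_{i,j}$, whose degree is the signed count $\# \scr M_{i,j}$ with signs coming from the framing orientations. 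The main obstacle I expect is this last sign-bookkeeping step: one must verify that the desuspension shift \ref{eq:desusp} introduces no global sign, and that the framing-induced orientations of $\scr M_{i,j}$ agree with the standard sign conventions in the Morse differential. Everything else is a routine (if intricate) unwinding of the CJS definition.
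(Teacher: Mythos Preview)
Your proposal is correct and follows essentially the same approach as the paper: re-index the natural filtration \ref{eq:cjsfilt} by Maslov index using the observation that $\scr M_{i,j} = \varnothing$ whenever $\mu(j) \ge \mu(i)$, verify that the resulting graded quotients are spheres of dimension $\mu(i) + D$, and identify the cellular boundary map with the Morse differential via the degree of the Pontrjagin--Thom collapse on a framed $0$-manifold. Your write-up is in fact more detailed than the paper's on the attaching-map analysis and the sign caveat; the paper dispatches the differential in a single sentence.
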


    We expect the same result to hold true more generally, for any suitable notion of Morse-Bott homology, but we do not endeavor to prove this in the present paper.

    \begin{proof}
        The filtration \ref{eq:cjsfilt} could be said to be cellular, in that its graded quotients are spheres. However, these spheres are not necessarily arranged in ascending order of dimension. To fix this, let us denote by $X_{\le k}$ the union of all the $i$-summands with $\mu(i) \le k$. To see that this filtration respects the quotient \ref{eq:eqrel}, note that $\scr M_{i,j}$ is empty unless $\mu(j) < \mu(i)$; so the identification maps on the boundary of the $i^{\rm th}$ summand always land into $j^{\rm th}$ summands with $\mu(j) < \mu(i)$. In particular, this yields a filtration of the unstable CJS construction. The graded quotient $X_{\le k} / X_{\le k-1}$ is a wedge of spheres of the same dimension:
        \[ \begin{aligned}
            \dim V_{\min I, i} + {\rm rk\;} \xi_i + \dim \bb F_{i, \max I} = \dim V_{\min I, \max I} - \mu(\max I) + {\rm rk\;} \xi_{\max I} + k.
        \end{aligned} \]
        The shift inherent in the \emph{stable} CJS construction gets rid of all the extra terms, so that now $X_{\le k}/X_{\le k-1}$ becomes a wedge of spheres of dimension $k$ after the shift. In particular, the underlying graded abelian groups of $C^{\rm cell}_*$ and $C^{\rm Morse}_*$ agree. That the differentials agree follows from the fact that the degree of the Pontrjagyn-Thom collapse map associated to an oriented 0-manifold precisely agrees with its signed count.
    \end{proof}

    \begin{THM}[Main result]\label{thm:main}
        The stable Cohen-Jones-Segal construction applied to the flow category $\scr F_{\rm MB}$ associated to a generic Morse-Bott flow $(M, f, \Phi)$ and $E$-twisted stable normal framing, the CJS construction recovers the Thom spectrum $M^E := \Sigma^{-{\rm rk\;} E} {\rm Th}(E)$.
    \end{THM}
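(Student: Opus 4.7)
The proof strategy is to realize ${\rm CJS}(\scr F_{\rm MB})$ and $M^E$ as two terms of a cofiber triangle whose third term is the stable CJS of the augmented flow category $\scr F_{\rm MB}^{\rm augm}$ from {\sc Ex.\,\ref{exp:Faugm}}, and then to show the latter is stably contractible.

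Since the $E$-twisted framings extend over $\scr F_{\rm MB}^{\rm augm}$ with $\xi_\infty = E$ by hypothesis, one may form $Y := {\rm CJS}^u(\scr F_{\rm MB}^{\rm augm})$ unambiguously. Its $\infty$-summand is $Y_\infty = V_{1,\infty}^+ \wedge {\rm Th}(E)$ (since $\bb F_{\infty,\infty}=\{0\}$), sitting inside $Y$ as a closed subspace. Using the splitting $\bb F_{i,\infty} = \bb F_{i,n}\times\bb E_{n,\infty}\times\bb R_{\ge 0}^{\{\infty\}}$ of the ambient spaces together with the pointed-cofiber identity $\bb F_{i,\infty}^+/\bb E_{i,\infty}^+\simeq\Sigma\bb E_{i,\infty}^+$ (obtained by one-point-compactifying the interior slice $\bb E_{i,\infty}\times\bb R_{>0}\cong\bb E_{i,\infty}\times\bb R$), one identifies $Y/Y_\infty \simeq {\rm CJS}^u(\scr F_{\rm MB})\wedge\Sigma\bb E_{n,\infty}^+$. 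Passing to the stable CJS through the desuspensions of {\sc Def.\,\ref{def:cjs}}, and plugging in the framing-dimension identity $\dim V_{1,\infty}-\dim V_{1,n} = \dim\bb E_{n,\infty} - {\rm rk}\,E$---a direct consequence of the framing equation $T_i^-\oplus N_{i,\infty}\cong V_{i,\infty}\oplus E$ combined with the Maslov-grading formulas for $\dim\scr M_{i,j}$ and $\dim\scr W_i$---this cofibration becomes a stable triangle
\[
\Sigma^{-1} M^E \;\longrightarrow\; {\rm CJS}(\scr F_{\rm MB}^{\rm augm}) \;\longrightarrow\; {\rm CJS}(\scr F_{\rm MB}).
\]
Thus the theorem reduces to showing ${\rm CJS}(\scr F_{\rm MB}^{\rm augm})\simeq 0$ stably, since the associated long exact sequence then forces ${\rm CJS}(\scr F_{\rm MB})\simeq\Sigma\cdot\Sigma^{-1}M^E = M^E$.

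Morally the stable contractibility should hold because the $\scr W_i$ encode ``null-homotopies'' for the whole structure: every flow line in the augmented category eventually terminates at some point of $M = \scr C_\infty$, so the augmented CJS should be the mapping cylinder of an identity map. Concretely, I would filter $Y$ by critical manifolds (as in the proof of {\sc Prop.\,\ref{prop:hinv}}) and analyze graded pieces. After stable desuspension, the graded piece $X_i/X_{i+1}$ is the Thom spectrum ${\rm Th}(T_i^-\oplus E|_{\scr C_i})$ over $\scr C_i$, which exactly matches the graded piece of the $f$-sublevel-set filtration of $M^E$ coming from the Morse-Bott handle decomposition. The key claim---that the connecting maps between successive graded pieces agree under both filtrations, so that the two descriptions assemble into a mapping cone of the identity---would be proved by unpacking the Pontrjagin-Thom collapses ${\rm PT}_{i,\infty}$ associated with the $\scr W_i$-embeddings and checking they reproduce the Morse-Bott attaching maps.

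The main obstacle is this last coherent matching of connecting maps. It demands careful bookkeeping of orientations (coming from the stable framing data) and of the canonical trivializations of the tubular neighborhoods $\Omega_i$ near each critical manifold ({\sc Rmk.\,\ref{rmk:cof}} and {\sc Prop.\,\ref{prop:backmap}}). In particular, one has to verify that the framing-induced PT collapses behave well under the oriented blow-up construction of {\sc Thm.\,\ref{thm:sm}} for $\scr W_i$, so that the boundary gluings in the CJS precisely reproduce the Morse-Bott attaching maps. Once this is established, the contractibility of ${\rm CJS}(\scr F_{\rm MB}^{\rm augm})$ follows, and the homotopy invariance of {\sc Prop.\,\ref{prop:hinv}}, applied to the contractible parameter space of $E$-twisted framings ({\sc Def.\,\ref{def:can}}), guarantees independence from the auxiliary choices and completes the proof.
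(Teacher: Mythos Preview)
Your overall architecture is equivalent to the paper's: the cofiber triangle you write down is exactly the exact triangle of {\sc\S\ref{sec:pss}} (see {\sc Ex.\,\ref{exp:comparisonmap}}), so proving ${\rm CJS}(\scr F_{\rm MB}^{\rm augm})\simeq 0$ is logically the same as proving that the comparison map ${\rm CJS}(\scr F_{\rm MB})\to M^E$ is a stable equivalence. The paper takes the latter route, and this turns out to be the more efficient one.

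The difference in execution is precisely at the point you flag as ``the main obstacle.'' You propose to filter ${\rm CJS}(\scr F_{\rm MB}^{\rm augm})$ and match the \emph{connecting maps} of that filtration against the Morse--Bott attaching maps of the sublevel-set filtration of $M^E$, concluding that the augmented CJS is a cone on the identity. That matching is genuinely delicate: it requires you to identify, for each $i$, the boundary map $X_i/X_{i+1}\to\Sigma X_{i+1}/X_{i+2}$ with a geometric attaching map, and these boundary maps involve all the ${\rm PT}_{i,j}$ for $j>i$ simultaneously, not just ${\rm PT}_{i,\infty}$. The paper sidesteps this entirely. It constructs the comparison map ${\rm CJS}^u(\scr F_{\rm MB})\wedge\bb E_{n,\infty}^+\to V_{1,\infty}^+\wedge{\rm Th}(E)$ explicitly via ${\rm PT}_{i,\infty}$, filters \emph{both} source and target compatibly (by $\{X_i\}$ and by $\{{\rm Th}(E|_{M_i})\}$), and then only needs to check that the induced map on each graded quotient is an equivalence. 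That check reduces to a single local computation: on the $i$th graded piece the map is a Pontrjagin--Thom collapse onto a tubular neighborhood of $D_i^-\setminus M_{i+1}$ inside $\scr C_i\times\bb E_{i,\infty}$, which has degree~$1$ in each fiber of the Thom space over $\scr C_i$, and one finishes with the Serre spectral sequence for the pair $(\bb D(E),\bb S(E))$ plus Hurewicz.

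So your proposal is not wrong, but it is incomplete at exactly the hard step, and the paper's resolution of that step is simpler than what you outline: rather than matching attaching maps, one checks a degree on a single map of Thom spaces. If you want to finish along your lines, the cleanest fix is to abandon the contractibility argument and instead observe that the Puppe boundary map $\Sigma^{-1}{\rm CJS}(\scr F_{\rm MB})\to M^E$ of your triangle \emph{is} the paper's comparison map, and then run the paper's filtration-and-degree argument on it directly.
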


    \begin{proof}
        Recall that the $E$-twisted framing on $\scr F_{\rm MB}$ is the restriction of a stable normal framing on $\scr F^{\rm augm}_{\rm MB}$ relative to $\xi_\infty = E$. So the Pontrjagyn-Thom maps involving the extra index $\infty$ are of the form ${\rm PT}_{i,\infty} : {\rm Th}(\xi_i) \wedge \bb E_{i,\infty}^+ \to V_{i,\infty}^+ \wedge {\rm Th}(E).$
        We use these ${\rm PT}_{i, \infty}$ to construct a map out of the unstable CJS construction \ref{eq:cjs} for $\scr F_{\rm MB}$, smashed with the sphere $\bb E_{n,\infty}^+$, into $V_{1,\infty}^+ \wedge {\rm Th}(E)$. On the $i^{\rm th}$ summand, we define the desired map to be
        \begin{equation*}\numeq\label{eq:compmap}
            \begin{gathered}
                V_{1,i}^+ \wedge {\rm Th}(\xi_i) \wedge \bb F_{i,n}^+ \wedge \bb E^+_{n,\infty} \cong V_{1,i}^+ \wedge {\rm Th}(\xi_i) \wedge \bb E_{i,\infty}^+ \\
                \overset{{\rm id} \wedge {\rm PT}_{i,\infty}}{-\!\!\!-\!\!\!-\!\!\!\longrightarrow} V_{1,i}^+ \wedge V_{i,\infty}^+ \wedge {\rm Th}(E) \cong V_{1, \infty}^+ \wedge {\rm Th}(E).
            \end{gathered}
        \end{equation*}
        That this descends to the quotient follows again by the compatibility \ref{eq:ptassoc} with the boundary inclusions. Since $\phi_{i,\infty}$ is an isomorphism, and $\dim \scr W_n = \dim \scr C_n$, it follows that $\dim \bb E_{n,\infty} = \dim V_{n,\infty} + {\rm rk\;} E - {\rm rk\;} \xi_n$. Together with the fact that $\mu(n)=0$, it follows that, after desuspending \ref{eq:compmap} by $\dim V_{1,\infty} + {\rm rk}\; E$, we get a well-defined map from the stable CJS construction to the Thom spectrum $\Sigma^{-{\rm rk}\; E}{\rm Th}(E)$.

        It remains to prove that it is a stable weak equivalence. We do so by an inductive 5-lemma argument similar to that of {\sc Prop.\,\ref{prop:hinv}}. There is again a filtration $\{X_i\}$ of the unstable CJS construction $X$, see \ref{eq:cjsfilt}, and accordingly also a filtration $\{M_i\}$ on $M$, where $M_i$ is the subset consisting of all points in $M$ below the level-set $L_{i-1,i}$ (and $M = M_1$ by convention). Let $E_i$ denote the restriction of $E$ to $M_i$. The stabilized map $X \wedge \bb E_{n,\infty}^+ \to V_{1,\infty}^+ \wedge {\rm Th}(E)$ we have just constructed respects this filtration given by the $X_i$ and ${\rm Th}(E_i)$. By the 5-lemma, it remains to check that the induced map $(X_{i}/X_{i+1}) \wedge \bb E_{n,\infty}^+ \to V_{1,\infty}^+ \wedge ({\rm Th}(E_i)/{\rm Th}(E_{i+1}))$ on the graded quotients is a stable weak-equivalence.
        
        First, we show that both sides are homotopy-equivalent to a suspension of ${\rm Th}(\xi_i)$. On the left-hand side, we have $(X_i/X_{i+1}) \wedge \bb E_{n,\infty}^+ = V_{1,i}^+ \wedge {\rm Th}(\xi_i) \wedge (\bb E_{i, \infty}^+ / \partial)$, the quotient by all the boundary strata. Since $\bb E_{i,\infty}^+/\partial$ is homeomorphic to a sphere, the left-hand side is indeed a suspension of ${\rm Th}(\xi_i)$ of rank $\dim V_{1,i} + \dim \bb E_{i,\infty}$. On the right-hand side, we find that $M_i$ deformation-retracts onto $M_{i+1} \cup D_i^-$, and hence ${\rm Th}(E_i)/{\rm Th}(E_{i+1})$ deformation retracts onto the subspace ${\rm Th}(E|_{D_i^-}) / {\rm Th}(E|_{S_i^-}) \cong {\rm Th}(E|_{\scr C_i} \oplus T_i^-)$. Now, after we smash with $V_{1,\infty}^+$, and use the framing $\phi_{i,\infty} : \xi_i \oplus N_{i,\infty} \overset\sim\to V_{i,\infty} \oplus E$, the space in question is further homotopy equivalent to
        \[ V_{1,\infty}^+ \wedge {\rm Th}(E|_{\scr C_i} \oplus T_i^-) \cong V_{1,i}^+ \wedge {\rm Th}(\xi_i \oplus N_{i,\infty}|_{\scr C_i} \oplus T_i^-) \cong V_{1,i}^+ \wedge {\rm Th}(\xi_i \oplus T\bb E_{i,\infty}), \]
        where $N_{i,\infty}$ is the normal bundle of the embedding $\scr W_i \subset \scr C_i \times \bb E_{i,\infty}$.

        Now, it is clear that both sides are abstractly homotopy-equivalent, and it remains to check that the map induced on the graded quotients is indeed a stable homotopy-equivalence. To see this, first note that the map respects the ``fibers over $\scr C_i$,'' i.e. all points in the domain either map to the basepoint in the target, or if not, the $\scr C_i$-coordinate is preserved. In each such ``fiber'', the map collapses everything far away from $\scr W_i$ to a point, and in fact everything far away from $(D_i^- \setminus M_{i+1}) \subset \scr W_i$, since we took the associated graded quotient. Finally, on a small tubular neighborhood near $(D_i^- \setminus M_{i+1}) \subset \scr C_i \times \bb E_{i, \infty}$, the map just follows the isomorphism $\phi_{i,\infty}$. Hence, in each sphere ``fiber'' of the Thom space, the map is of degree 1. It follows that the entire map induces an isomorphism on homology, following the usual proof of the Thom isomorphism using the Serre spectral sequence for pairs $E_2^{p,q} = H^p(B, \scr H^q(\bb D(E_x), \bb S(E_x))) \implies H^{p+q}(\bb D(E), \bb S(E))$, and its naturality. Hence, the map between Thom spaces is a stable homotopy equivalence by the Hurewicz theorem.
    \end{proof}

\section{Functorial continuation maps between CJS constructions}\label{sec:pss}

    In this section, we explain how the Cohen-Jones-Segal construction of {\sc \S \ref{sec:cjs}} can be made functorial, generalizing the method used in the proof of {\sc Thm.\,\ref{thm:main}}. We explain an important application of this functoriality property, namely the lifting of classical Piunikhin-Salamon-Schwarz maps to the level of CJS constructions. Our exposition does not involve any symplectic geometry, but rather only formally sets up the construction.

    \begin{DEF}
        Assume we are given two graded flow categories $\scr F_I, \scr F_J$ on finite indexing sets $I$ and $J$. We define a \keywd{flow continuation} from $\scr F_I$ to $\scr F_J$ to be any extension thereof to a graded flow category $\scr F_{IJ}$ indexed over the disjoint union $I \sqcup J$ with $i < j$ for all $i \in I, j \in J$ (cf. also ``flow bimodule'' in \cite{AB}.) We use the typical notations $\scr C_i, \scr M_{i,j}, \ldots$ from before.

        If, moreover, one is given a stable normal framing of $\scr F_{IJ}$, then we claim that there is an exact triangle
        \[ \Sigma^{-1}{\rm CJS}(\scr F_I) \overset\alpha\longrightarrow {\rm CJS}(\scr F_J) \to {\rm CJS}(\scr F_{IJ}) \to {\rm CJS}(\scr F_I) \]
        of stable CJS constructions associated to the three flow categories involved. The first map $\alpha$ in the triangle is called the \keywd{continuation map} induced by the continuation of flow categories, and is defined as follows:
        \begin{equation*}\numeq\label{eq:contmap} \begin{gathered}
            {\rm CJS}^u(\scr F_I) \wedge \left(\bigcup_{j \in J} \partial^j \bb F^+_{\max I,\max J} \right) \cong \bigcup_{j \in J} \partial^j \left(\coprod_{i \in I} V_{\min I, i}^+ \wedge {\rm Th}(\xi_i) \wedge \bb F_{i, \max J}^+ \;\Big/\! \sim\right) \\ \overset{{\rm id} \wedge {\rm PT}_{i,j} \wedge {\rm id}}{-\!\!\!-\!\!\!-\!\!\!\longrightarrow} \coprod_{j \in J} V_{\min I, j}^+ \wedge {\rm Th}(\xi_j) \wedge \bb F_{j, \max J}^+ {\;\Big /\! \sim} \;\;\cong\;\; V^+_{\min I, \min J} \wedge {\rm CJS}^u(\scr F_J), 
        \end{gathered} \end{equation*}
        where by the map denoted ${\rm id} \wedge {\rm PT}_{i,j} \wedge {\rm id}$ is understood, on each $\partial^j$-boundary and $i^{\rm th}$ summand, the identification map used in the definition of the quotient \ref{eq:cjs} for ${\rm CJS}(\scr F_{IJ})$; these identification maps agree on the pairwise overlaps precisely because the original equivalence relation defining \ref{eq:cjs} was transitive. \ref{eq:contmap} indeed induces a map $\Sigma^{-1}{\rm CJS}(\scr F_I) \overset\alpha\longrightarrow {\rm CJS}(\scr F_J)$ on stable CJS constructions, because both $\bigcup_{j \in J} \partial^j \bb F^+_{\max I, \max J}$ and $V_{\min I, \min J}^+$ are spheres of the appropriate dimensions, as can be seen from a simple calculation. The cofiber of $\alpha$ is equivalent to ${\rm CJS}(\scr F_{IJ})$, because the latter is defined precisely as attaching the contractible space ${\rm CJS}^u(\scr F_I) \wedge \bb F^+_{\max I, \max J}$ along its $(\bigcup_{j \in J} \partial^j)$-boundary, i.e. the domain of \ref{eq:contmap}, to the codomain $V_{\min I, \max J}^+ \wedge {\rm CJS}^u(\scr F_J)$ of \ref{eq:contmap}.
    \end{DEF}

    \begin{RMK}\label{rmk:conthom}
        At least in the classical case $\scr C_i = p_i$, the continuation map on CJS constructions lifts the usual Morse continuation map on the level of Morse complexes $C_*^{\rm Morse}$, see {\sc Prop.\,\ref{prop:hom}}, given by 
        \[f(p_i) = \sum_{\mu(j) = \mu(i)-1} (\#\scr M_{i,j}) \cdot p_j.\]
        This can be seen by directly inspecting the degree on the graded quotients associated to the two CW filtrations on ${\rm CJS}(\scr F_I)$ and ${\rm CJS}(\scr F_J)$ respectively.
    \end{RMK}

    \begin{EXP}\label{exp:comparisonmap}
        The Morse-Bott flow category $\scr F^{\rm augm}_{\rm MB}$ of {\sc Ex.\,\ref{exp:Faugm}} is a flow continuation between the flow category $\scr F_{\rm MB}$ of {\sc Ex.\,\ref{exp:F}}, and the flow category with one critical locus being $M$ itself. Consequently one obtains a continuation map from the Cohen-Jones-Segal construction associated to $\scr F_{\rm MB}$ with the $E$-twisted framings, to the Thom spectrum $\Sigma^{-{\rm rk\;}E} {\rm Th}(E)$. We leave it to the reader to check that this agrees with the map constructed in the proof of the {\sc Main Thm.\,\ref{thm:main}}, and hence that it is a stable homotopy-equivalence. In particular, the CJS construction applied to $\scr F^{\rm augm}_{\rm MB}$ is contractible.
    \end{EXP}

    \begin{RMK}\label{rmk:rev}
        Since the construction of $\scr F_{\rm MB}$ is symmetric with respect to reversing the flow, see {\sc Rmk.\,\ref{rmk:sym}}, one can instead augment it to a flow category $\scr F^{-\rm augm}_{\rm MB}$ by adding an extra index $-\infty$ before $1$, and setting $\scr M_{-\infty, i} := \scr W^i$ (notice the upper $i$), i.e. the $\scr W_i$ of the \emph{reverse} flow $(M,-f,-\Phi)$. This will give continuation maps in the opposite direction, provided we can frame it, which we now explain in the simplified case $\scr C_i = p_i$ with $\xi_i = 0$ of {\sc Rmk.\,\ref{rmk:other}}. Given the inherent asymmetry in the notion of stable normal framing, {\sc Def.\,\ref{def:snf}}, we claim that any stable normal framing of $\scr F^{\rm augm}_{\rm MB}(M,-f,-\Phi)$ relative $\xi_\infty = E$ induces a stable normal framing of $\scr F^{-\rm augm}_{\rm MB}(M, f, \Phi)$ relative $\xi_{-\infty} = -TM - E$ (suspended enough times to become an actual vector bundle.)
        
        Indeed, for $i \neq -\infty$, we can just use the same framings $\phi_{i,j}$ (since $\xi_i=0$, there is no asymmetry.) But at $j = -\infty$, we must embed $\scr W^j$ into $M \times \bb E_{-\infty, j}$, instead of just $\bb E_{-\infty, j}$; of course, we can simply take take the old embedding into $\bb E_{-\infty, j}$, and tack on the $M$-factor via the evaluation at the free endpoint. As a result, the normal bundle picks up an extra copy of $TM$, so we have to add $-TM$ to $\xi_{-\infty}$ to compensate. Similarly, we must add $-E$ to move the copy of $E$ from $\xi_\infty$ to $\xi_j$, thereby fixing the asymmetry.
        
        A proof very similar to that of our main result shows that the continuation map gotten in this way is a stable homotopy equivalence, which we leave to the interested reader. In particular, reversing the ordering of the critical sets, while preserving the stable normal framing, results in taking the \emph{Spanier-Whitehead dual} on the level of CJS constructions. We conjecture that this is true for an arbitrary framed flow category, not just $\scr F_{\rm MB}$.
    \end{RMK}

    Next, one may ask whether the construction of continuation maps can be extended to become functorial. In other words, if one is given graded flow categories $\scr F_I, \scr F_J, \scr F_K$ indexed over $I$, $J$ and $K$ respectively, and flow continuation maps $\scr F_{IJ}$, $\scr F_{JK}$ from $\scr F_{I}$ to $\scr F_{J}$, and from $\scr F_{J}$ to $\scr F_{K}$ respectively, together with compatible stable normal framings so as to obtain continuation maps, then is there a good description of the composite
    \begin{equation*}\numeq\label{eq:comp}
        \Sigma^{-2}{\rm CJS}(\scr F_I) \to \Sigma^{-1}{\rm CJS}(\scr F_J) \to {\rm CJS}(\scr F_K)?
    \end{equation*}
    
    We claim that this composite can itself be described as a continuation map between $\scr F_{I}$ and $\scr F_{K}$, where the Maslov grading of $\scr F_{I}$ is artificially decreased by 1 (this shift is clearly necessary because of the $-2$ in the formula above.)

    \begin{DEF}
        Given flow continuations $\scr F_{IJ}$ from $\scr F_I$ to $\scr F_J$, and $\scr F_{JK}$ from $\scr F_J$ to $\scr F_K$ with the usual notations, we define the \keywd{composite flow continuation} from $\scr F_I$ to $\scr F_K$ to be given by a suitable smoothing $\scr M_{i,k}^{\rm sm}$ of the union
        \[ \scr M_{i,k} := \bigcup_{j \in J} \scr M_{i,j} \times_{\scr C_j} \scr M_{j,k} \]
        for all $i \in I, k \in K$, where the common subspaces
        \[ \scr M_{i,j} \times_{\scr C_j} \scr M_{j,k} \supset \scr M_{i,j} \times_{\scr C_j} \scr M_{j,j'} \times_{\scr C_{j'}} \scr M_{j', k} \subset \scr M_{i,j'} \times_{\scr C_{j'}} \scr M_{j',k}\]
        get identified. For this to make sense, we need to say a few more words.
        
        First, for the fiber-products to be well-defined, the transversality of all source and target maps is presupposed; if this is not true, transversality can be achieved by small perturbation of the source and target maps to $\scr C_j$. Second, it is not immediate from the definition in what way this is a manifold. It is not hard to see that it is a topological manifold with $(I \sqcup K)$-corners, because the pieces always locally meet like the closed boundary strata $\{\partial^j \bb R_{\ge 0}^{I\sqcup J\sqcup K} \times V\}_{j \in J}$ which form the $\bigcup_{j \in J}$-boundary of $\bb R_{\ge 0}^{I\sqcup J\sqcup K} \times V$ for some vector space $V$, a topological manifold with $I \sqcup K$-corners. To endow these moduli spaces with smooth structures and framings, we can actually take inspiration from the CJS construction. Consider embeddings $\iota_{i,j} : \scr M_{i,j} \hookrightarrow \scr C_i \times \bb E_{i,j}$ and $\iota_{j,k} : \scr M_{j,k} \hookrightarrow \scr C_j \times \bb E_{j,k}$, which together give embeddings $\scr M_{i,j} \times_{\scr C_j} \scr M_{j,k} \hookrightarrow \scr C_i \times \bb E_{i,j} \times \bb E_{j,k}$, see \ref{eq:fibmap}. These agree on the triple fiber-product overlaps, so together they give an embedding
        \[ \iota_{i,k} : \scr M_{i,k} \hookrightarrow \scr C_i \times \bigcup_{j \in J} \partial^j\bb E_{i,k}, \]
        where $\bb E_{i,k} := \bb E_{i,j} \times \bb R_{\ge 0}^{\{j\}} \times \bb E_{j,k}$ for some (any) $j \in J$. In order to smoothe these spaces, let us make the further assumption that the original embeddings $\iota_{i,j}$ and $\iota_{j,k}$ are \emph{collar-like} near the closed boundary strata, meaning that for any intermediary index $i < u < j$, the embedding of $\scr M_{i,j}$ in $\scr C_i \times \bb E_{i,j}$ near the $u$-boundary looks like the embedding of $\partial^u \scr M_{i,j}$ in $\scr C_i \times \bb E_{i,u} \times \{0\} \times \bb E_{u,j}$ times a small interval $[0, \epsilon]$. It is not hard to make any given system of embeddings collar-like by a perturbation. Then, one may thicken $\iota_{i,k}$ from above to obtain a collar-like submanifold $\scr M_{i,k}^{\rm thick}$ of a neighborhood of $\scr C_i \times \bigcup_{j \in J} \partial^j\bb E_{i,k}$ inside $\scr C_i \times \bb E_{i,k}$. If $\{x_j\}_{j \in J}$ denote the $\bb R_{\ge 0}$-coordinates in the $J$-directions, then we may intersect this submanifold with the hypersurface $\widetilde{\bb E}_{i,k} := \{\prod_{j \in J} x_j = \epsilon\} \subset \bb E_{i,k}$ (for sufficiently small $\epsilon > 0$), in order to obtain a \emph{smoothing} $\scr M^{\rm sm}_{i,k}$ of $\scr M_{i,k}$, together with a smooth embedding into the new $\scr C_i \times \widetilde{\bb E}_{i,k}$. The normal framings can likewise be extended up to contractible spaces of choices over the thickening, resulting in framings of the normal bundles of the $\scr M_{i,k}^{\rm sm} \subset \scr C_i \times \widetilde{\bb E}_{i,k}$. Notice that $\dim \scr M_{i,k}^{\rm sm}$ is one less than expected, so we must decrease the Maslov grading of $\scr F_I$ by one. Finally, $\scr M_{i,k}^{\rm thick}$ gives a homotopy (via the Pontrjagyn-Thom construction) between the composite \ref{eq:comp}, and the continuation map induced by $\scr M^{\rm sm}_{i,k}$, which can be seen by direct inspection of the map \ref{eq:contmap}.
    \end{DEF}

    \begin{EXP}
        An important application of this, motivated by symplectic geometry, is the case in which the flow continuation $\scr F_{IJ}$ is the one explained in {\sc Ex.\,\ref{exp:comparisonmap}}, and the flow category $\scr F_K$ is any other flow category (which one should think of as being associated to some Floer data.) More concretely, let us use Roman indices $\scr M_{i,j}$ to denote Morse moduli spaces of $\scr F_I$, and Greek indices $\scr M_{\alpha, \beta}$ to denote ``Floer'' moduli spaces of $\scr F_K$. Now, a flow continuation between $\scr F_J$ and $\scr F_K$ consists of moduli spaces $\scr W^\alpha$, with an evaluation map to the manifold $M$ (as motivation for this, consider moduli of holomorphic punctured spheres $\bb C = \bb C_\infty \setminus \{\infty\}$ converging asymptotically to a Floer critical point near the puncture $\infty$, with evaluation back to the manifold itself via evaluation at $0$.) The standard way of comparing the Floer homology associated to $\scr M_{\alpha, \beta}$ to the homology of $M$ is to define mixed moduli spaces
        \begin{equation*}\numeq\label{eq:mixedmoduli}
            \scr M_{i,\alpha} := \scr W_i \times_M \scr W^\alpha,
        \end{equation*}
        which in the Floer-theoretic analogy parameterize Morse flow lines intersecting the punctured holomorphic spheres, and constructing a continuation map from these. It is clear this is precisely the composite flow continuation between that from $\scr F_I$ to $\scr F_J$ (which induces an equivalence of CJS constructions), and that from $\scr F_J$ to $\scr F_K$. In particular, one obtains a commutative triangle
        \[ \begin{tikzcd}[column sep=tiny]
            {\rm CJS}\left(\text{Morse } \scr M_{i,j}\right) \ar[rd, "\sim", swap]\ar[rr, "{\rm cl.\; PSS}"] && {\rm CJS}\left(\text{Floer } \scr M_{\alpha, \beta}\right) \\
            & M^E \ar[ru, "{\rm simpl.\; PSS}", swap]
        \end{tikzcd} \]
        in the stable homotopy category, where the horizontal map is induced by the mixed moduli \ref{eq:mixedmoduli}, which we call the \keywd{classical Piunikhin-Salamon-Schwarz} map. This shows that there is a simpler way to compare the ``Floer'' homotopy type ${\rm CJS}(\scr M_{\alpha, \beta})$ to the original manifold $M$, without having to construct the intermediary \ref{eq:mixedmoduli}. We call it the \keywd{simplified PSS map}, since its definition does not use the Morse theory of $M$ at all. Our main result {\sc Thm.\,\ref{thm:main}}, together with {\sc Rmk.\,\ref{rmk:conthom}}, implies that this simplified map lifts the usual classical PSS map on the level of homology. Finally, we note that one can similarly obtain PSS maps going in the other direction, by virtue of {\sc Rmk.\,\ref{rmk:rev}}.
    \end{EXP}

\section{Relaxing stable normal framings to spectrum-valued orientations}\label{sec:or}

    An absolutely necessary ingredient in performing the Cohen-Jones-Segal construction of {\sc Def.\,\ref{def:cjs}} for a flow category $\scr F$ is the data of a \emph{stable normal framing} thereof, see {\sc Def.\,\ref{def:snf}}. The mere existence of such stable normal framings is quite a strong condition; for instance, even in the classical case $\scr C_i = {\rm pt}$, it would follow in particular that $[T\scr M_{i,j}] \in \widetilde{KO}(\scr M_{i,j})$ are all zero. For the flow categories $\scr F_{\rm MB}, \scr F_{\rm MB}^{\rm augm}$ of {\sc\S\ref{sec:smooth}}, the stable normal framings were built in {\sc\S\ref{sec:stfr}} by exploiting the specific structure of the quotient $\widetilde{\scr W_i}$. But for a general flow category arising from some Floer data, there is no a priori reason to expect that they can be stably framed; however, this notion can be relaxed in certain cases, as we show in this final section.

    Instead of using the very limited Spanier-Whitehead category like we did in {\sc\S\ref{sec:cjs}}, we instead make use of \emph{orthogonal spectra}, which we now briefly recall (although perhaps other models of highly structured spectra would work equally well.) Some good references include \cite{MMSS}, \cite{MM}, \cite{Sch}. The proficient reader may skip directly to {\sc Def.\,\ref{def:or}}.

    \begin{DEF}
        We define a category $\scr O$ enriched in $(\scr{Top}_*, \wedge, S^0)$, the symmetric monoidal category of pointed topological spaces. Its objects are finite-dimensional Euclidean spaces. Given two such spaces $V$ and $W$, let ${\rm Emb}(V, W)$ denote the space of isometric embeddings $V \hookrightarrow W$, and let $W - V$ denote the vector sub-bundle of the trivial bundle $\underline{W}$ over ${\rm Emb}(V, W)$ whose fiber over an embedding $i : V \hookrightarrow W$ is given by the orthogonal complement $i(V)^\perp$ in $W$. We define the Hom-space $\scr O(V, W) := {\rm Th}(W - V)$,
        to be the Thom space of this bundle, pointed by the point at $\infty$. Given nested embeddings $U \hookrightarrow V \hookrightarrow W$, the vector spaces $(W-V) \oplus (V-U)$ and $(V - W)$ are canonically identified; so over the whole space of embeddings, this gives multiplication maps $\scr O(V, W) \wedge \scr O(U, V) \to \scr O(U, W)$,
        which are easily seen to be associative and unital, with identity given by ${\rm id}_V \in \scr O(V, V)$.
    \end{DEF}

    \begin{DEF}
        An \keywd{orthogonal spectrum} is any pointed topological functor $\scr O \to \scr{Top_*}$.
    \end{DEF}

    To unpack this definition, an orthogonal spectrum $X$ is a collection of pointed spaces $X(V)$, together with suitably compatible pointed continuous maps $\scr O(V, W) \to {\scr Top_*}(X(V), X(W))$. By currying any such map, one obtains a map $\alpha_{V, W} : \scr O(V, W) \wedge X(V) \to X(W)$. To understand $\alpha_{V,W}$, let us restrict it to the fiber over a particular embedding $i : V \hookrightarrow W$; the map takes the form $\alpha_i : (W - i(V))^+ \wedge X(V) \to X(W)$. All these maps $\alpha_i$ for various embeddings $i : V \hookrightarrow W$ can be said to vary continuously, since they assemble together into the map $\alpha_{V, W}$. Restricting $X$ to the Euclidean spaces $\cdots \subset \bb R^k \subset \bb R^{k+1} \subset \cdots$ recovers a classical suspension spectrum.

    \begin{EXP}
        The \emph{sphere spectrum} $\bb S$ is given by the Yoneda functor $\scr O(0,-) : \scr O \to \scr{Top_*}$. Concretely, it has $\bb S(V) := V^+$, with maps $\alpha_{V, W} : \scr O(V, W) \wedge V^+ \to W^+$ induced from the isomorphism $W \cong (W - V) \oplus V$.
    \end{EXP}

    \begin{EXP}
        Given an orthogonal spectrum $X$, and a pointed space $A$, one can produce another orthogonal spectrum $X \wedge A$ given by post-composing $X : \scr O \to \scr{Top_*}$ with the smashing functor $- \wedge A : \scr {Top_*} \to \scr {Top_*}$. Particularly important is $\bb S \wedge X =: \Sigma^\infty X$, the \emph{suspension spectrum} on $X$.
    \end{EXP}

    \begin{EXP}
        Given an orthogonal spectrum $X$, and a finite-dimensional Euclidean vector space $W$, there is a new spectrum ${\rm sh}^W X$ called the \emph{$W$-shift of $X$}, given by pre-composing $X : \scr O \to \scr{Top_*}$ with the functor $W \oplus - : \scr O \to \scr O$ which takes direct sum with $W$, so that on objects one has $({\rm sh}^W X)(V) = X(W \oplus V)$.
    \end{EXP}

    A most important feature of orthogonal spectra, compared to sequential spectra, is that they can be endowed with a symmetric-monoidal smash product of spectra, with unit given by $\bb S$. Its literal definition is quite long and abstruse, so instead we formulate a universal property for it:

    \begin{DEF}
        Given orthogonal spectra $X, Y, Z$, a \emph{bilinear map} $\mu : (X, Y) \to Z$ is defined to be a collection of maps $\mu_{V,W} : X(V) \wedge Y(W) \to Z(V \oplus W)$ satisfying the commutativity
        \[ \begin{tikzcd}
            \scr O(\tilde V, V) \wedge X(\tilde V) \wedge \scr O(\tilde W, W) \wedge Y(\tilde W) \dar["\alpha^X_{\tilde V, V} \wedge \alpha^Y_{\tilde W, W}"]\rar & \scr O(\tilde V \oplus \tilde W, V \oplus W) \wedge Z(\tilde V \oplus \tilde W) \dar["\alpha^Z_{\tilde V \oplus \tilde W, V \oplus W}"] \\
            X(V) \wedge Y(W) \rar["\mu_{V, W}"] & Z(V \oplus W)
        \end{tikzcd} \]
        where the top horizontal map is induced by the canonical identification of $(V - \tilde V) \oplus (W - \tilde W)$ and $(V \oplus W) - (\tilde V \oplus \tilde W)$ on the two $\scr O$-factors, and by $\mu_{\tilde V, \tilde W}$ on the other two factors. A \keywd{smash product} of $X$ and $Y$ is any \emph{universal} bilinear map $(X, Y) \to X \wedge Y$; such a smash product exists, and is unique up to unique isomorphism. (Much like how the tensor product $M \otimes_R N$ of modules is the universal recipient of a bilinear map from $M$ and $N$.) The smash product of orthogonal spectra turns out to be symmetric-monoidal, with unit $\bb S$.
    \end{DEF}

    \begin{RMK}\label{rmk:shsm}
        Shifting and smashing do not commute, although there is a natural map
        \[ ({\rm sh}^{W_1} X_1) \wedge ({\rm sh}^{W_2} X_2) \to {\rm sh}^{W_1 \oplus W_2} (X_1 \wedge X_2). \]
        This can be seen from the universal property, by constructing new bilinear maps
        \[ ({\rm sh}^{W_1} X_1)(V_1) \wedge ({\rm sh}^{W_2} X_2)(V_2) \cong X_1(W_1 \oplus V_1) \wedge X_2(W_2 \wedge V_2) \overset{\mu}\to  (X_1 \wedge X_2)((V_1 \oplus V_2) \oplus (W_1 \oplus W_2)). \]
        Despite not being an isomorphism, it is a stable homotopy equivalence.
    \end{RMK}

    \begin{DEF}
        A \keywd{ring spectrum} is an orthogonal spectrum $R$, equipped with a unit map $\eta : \bb S \to R$ and a multiplication map $\mu : R \wedge R \to R$ that are unital and associative in the usual diagrammatical sense. A \emph{left module} $M$ over $R$ is an orthogonal spectrum $M$ equipped with action maps $\alpha : R \wedge M \to M$ which are associative and unital in the usual sense.
    \end{DEF}

    \begin{EXP}\label{exp:rmod}
        In virtue of {\sc Rmk.\,\ref{rmk:shsm}}, given a ring spectrum $R$, any shift ${\rm sh}^W R$ is a left module over $R$, via the composite
        \[ ({\rm sh}^0 R) \wedge ({\rm sh}^W R) \to {\rm sh}^{W}(R \wedge R) \overset{{\rm sh}^W\mu}\longrightarrow {\rm sh}^W R. \]
        Similarly, given any other spectrum $X$, the smash product $R \wedge X$ is a left module over $R$. The same is true regarding any colimit of $R$-modules (colimits are computed pointwise, and hence will commute with all smash products involved.)
    \end{EXP}

    Now, we introduce the appropriate relaxation of the notion of stable framing for a flow category, which will be necessary to generalize the CJS construction.

    \begin{DEF}\label{def:or}
        Let $\scr F$ a flow category with notations as in {\sc Def.\,\ref{def:flow}}, together with a coherent system of embeddings $\iota_{i,j}$ as in {\sc Def.\,\ref{def:cohemb}}, and $N_{i,j}$ normal bundles as in {\sc Rmk.\,\ref{rmk:normadd}}. If $\{R_{i,j}\}_{i<j}$ are orthogonal spectra with associative maps $R_{i,u} \wedge R_{u,j} \to R_{i,j}$, we define an \keywd{orientation of $\scr F$ relative to $\xi_i$ and $R_{i,j}$} to be a collection of morphisms of orthogonal spectra
        \[ \phi_{i,j} : \Sigma^\infty{\rm Th}(\xi_i \oplus N_{i,j}) \to R_{i,j} \wedge {\rm Th}(\xi_j), \]
        which satisfy a compatibility condition analogous to \ref{eq:snf}, namely that $\phi_{i,j}|_u$ is given by
        {\small\[ \Sigma^\infty{\rm Th}(\xi_i \oplus N_{i,u} \oplus N_{u,j}) \to R_{i,u} \wedge \Sigma^\infty{\rm Th}(\xi_u \oplus N_{u,j}) \to R_{i,u} \wedge R_{u,j} \wedge \Sigma^\infty {\rm Th}(\xi_j) \to R_{i,j} \wedge \Sigma^\infty{\rm Th}(\xi_j). \]}
        Note that all these maps must exist and satisfy this compatibility condition at the point-set level, i.e. not merely in the stable homotopy category!
    \end{DEF}

    \begin{DEF}\label{def:spcjs}
        The \keywd{unnormalized Cohen-Jones-Segal construction} associated to the data of {\sc Def.\,\ref{def:or}} above is given by
        \[\numeq\label{eq:spcjs} \coprod_{i \in I} R_{\min I, i} \wedge \Sigma^\infty {\rm Th}(\xi_i) \wedge \Sigma^\infty \bb E_{i,\max I}^+ \;\Big/\! \sim, \]
        with identification maps just like in \ref{eq:eqrel}, but with $V_{i,j}^+$ replaced by $R_{i,j}$, and ${\rm PT}_{i,j}$ replaced by the collapse map $\bb E_{i,j}^+ \to {\rm Th}(N_{i,j})$, followed by the orientation $\phi_{i,j}$, and the multiplication $R_{\min I, i} \wedge R_{i,j} \to R_{\min I, j}$. This quotient has to be interpreted appropriately, since it is performed in the category of orthogonal spectra. One can either take the construction object-wise, or alternatively express it as a colimit over all the $\partial^J$-strata of the summands involved. If $\scr F$ is graded, the \keywd{normalized Cohen-Jones-Segal construction} is obtained by artificially desuspending \ref{eq:spcjs} by \ref{eq:desusp} like in {\sc Def.\,\ref{def:cjs}}. Since desuspension is tricky to define for an orthogonal spectrum, it is best to leave it in the stable homotopy category for the purposes of this paper.
    \end{DEF}

    We present two main examples of this. The second one is in a sense strictly stronger than the first, but also a lot more speculative, since it requires substantial analytical and topological details to be made rigorous. As a result, we will only provide a sketch.

    \begin{EXP}\label{exp:bord}
        (cf. \cite{Cohen}) Consider the special case that $R$ is a ring spectrum, and every single $R_{i,j}$ is equal to a shift ${\rm sh}^{V_{i,j}} R$ (with Euclidean vector spaces $V_{i,j}$ satisfying $V_{i,u} \oplus V_{u,j} = V_{i,j}$) with multiplication maps $R_{i,u} \wedge R_{u,j} \to R_{i,j}$ coming from {\sc Rmk.\,\ref{rmk:shsm}} and the multiplication on $R$. In this case, all the summands of \ref{eq:spcjs} are $R$-modules, as well as all the maps used in the definition of the quotient; hence, the resulting CJS construction is also naturally an $R$-module, per {\sc Ex.\,\ref{exp:rmod}}.
        
        Of particular interest is the case that $R$ is the \keywd{bordism spectrum} associated to a given system of structure groups, e.g. $MO, MSO, MU, MSU$, etc., see \cite{Sch} for definitions. For simplicity, assume also that $\scr F$ is classical, and all the $\xi_i$ are zero. To consider the simplest example $R = MO$, we claim that any vector bundle $E \to B$ of rank $n$ admits a contractible space of orientation maps $\Sigma^\infty{\rm Th}(E) \to {\rm sh}^nMO$. Indeed, since $({\rm sh}^{n} MO)(V) = MO(\bb R^n \oplus V)$ is the Thom space on the universal rank-$(n + \dim V)$ bundle on $BO_{n + \dim V}$, we may simply pick a classifying map $B \to BO_n$ (which can be done up to contractible space of choices), and define the map on spectra to be the Thomification of the map of bundles:
        \[ \smash{{\rm Th}(E) \wedge V^+ \to MO(\bb R^n) \wedge V^+ \to MO(\bb R^n \oplus V) = ({\rm sh}^n MO)(V).} \]
        Similarly, one can orient the flow category $\scr F$ relative to shifts of $MO$ inductively on $|j-i|$, at each stage extending the classifying map from the union of all the lower boundary strata to the whole $\scr M_{i,j}$. In particular, if no stable framings are available for $\scr F$, one can still always get an $MO$-module out of it; since $MO$ is isomorphic as an $\bb E_\infty$-ring spectrum to the Eilenberg-MacLane spectrum ${\rm H}\bb F_2[x_i | i \neq 2^k-1, {\rm deg}(x_i) = i]$, one obtains an element of the derived category of this infinite polynomial ring.

        The other examples $MSO, MU$, etc. work similarly, but they do require some extra data in order to orient $\scr F$. For instance, $MSO$ requires orientations (in the classical sense) of the vector bundles $N_{i,j}$, compatible under the identifications $N_{i,u} \oplus N_{u,j} = N_{i,j}$. This is equivalent to orienting the tangent spaces $T\scr M_{i,j}$ in a manner that respects the concatenation maps $T\scr M_{i,j}|_{\partial^u} = T\scr M_{i,u} \oplus \bb R \oplus T\scr M_{u,j}$. Likewise, $MU$ requires stable normal complex structures which are compatible with concatenation. We end by noting that $\bb S$ itself is the bordism spectrum for stably framed manifolds, and that any framing of a rank-$n$ vector bundle yields an ${\rm sh}^n \bb S$-orientation thereof. Hence, the construction of {\sc\S\ref{sec:cjs}} applied to a stable normal framing gives the same result as the spectrum-valued one of {\sc Def.\,\ref{def:spcjs}}.
    \end{EXP}

    \begin{RMK}
        At least in the setup of {\sc Ex.\,\ref{exp:bord}} above, we point out that all of our work in {\sc\S\ref{sec:pss}} of building continuation maps carries through in the homotopy category of $R$-modules. The only difference is that, in \ref{eq:contmap}, the last isomorphism should be replaced by an $R$-equivalence going the other way, due to the issue explained in {\sc Rmk.\,\ref{rmk:shsm}}; this means that for composite continuation maps, one has to compose spans, but this is not difficult. All the continuation maps are $R$-module maps.
    \end{RMK}

    \begin{EXP}\label{exp:twist} (Speculative)
        We show an example where allowing $R_{i,j}$ to be more than mere shifts of a ring spectrum $R$ could be interesting. Assume again for simplicity that the flow category $\scr F$ is classical, i.e. $\scr C_i = \{p_i\}$, and that the moduli spaces come from some particular Floer theory with no compactness or transversality issues. More specifically, suppose that $p_i$ are all points in a configuration space $X$ (e.g. $\tilde{\scr L} M$ in Hamiltonian Floer theory), and that $\scr M_{i,j}$ describes (modulo the translation $\bb R$-action) those paths $\gamma$ inside $X$ from $p_i$ to $p_j$ on which a certain non-linear 1$^{\rm st}$ order elliptic operator $D_\gamma$ vanishes. Further, let us assume that its linearization $D_\gamma^{\rm lin}$ is of the form
        \[\numeq\label{eq:dlin} D_\gamma^{\rm lin} = \frac d{ds} + A_{\gamma(s)} \]
        on $Y \times \bb R$ for some closed manifold $Y$ (e.g. $S^1$ in Hamiltonian floer theory), where $\{A_x\}_{x \in X}$ is a sufficiently nicely-varying family of \emph{self-adjoint}, at least up to lower-order terms, elliptic operators on $Y$ (e.g. $A_x$ has principal symbol $J\frac d{dt}$ in Hamiltonian Floer theory.) The fact that there are no transversality issues translates into the statement that ${\rm Cok}(D_{\gamma}^{\rm lin}) = 0$. On the other hand, the kernel recovers the tangent space, plus the $\bb R$-translation factor:
        \[ {\rm Ker}(D_\gamma^{\rm lin}) = T\scr M_{i,j} \oplus \bb R.\]
        Now, the key observation is that $D_\gamma^{\rm lin}$ can be defined over \emph{all paths} $\gamma$ from $p_i$ to $p_j$, even those which do not solve the non-linear equation $D_\gamma u = 0$. The difference is that now the cokernel may be non-zero; still, $D_{\gamma}^{\rm lin}$ will be Fredholm, provided that $A_{p_i}$ are all non-degenerate self-adjoint operators, by work of Atiyah-Patodi-Singer \cite{APS}.

        Hence, if we denote the space of all paths from $p_i$ to $p_j$ in $X$ by $\scr P_{i,j}$ (perhaps with some regularity condition imposed), there is a map $D^{\rm lin}_{i,j} : \scr P_{i,j} \to \scr{Fred}$ into the space of Fredholm operators, which is know by Atiyah-J\"anich \cite{At}, \cite{JanK} to be a classifying space for $K$-theory. Hence, there is a stable bundle ${\rm Ind}(D^{\rm lin}_{i,j}) \in KO(\scr P_{i,j})$ which, when restricted to $\scr M_{i,j} \subset \scr P_{i,j}$ recovers $T\scr M_{i,j} \oplus \bb R$. In particular, the normal bundle $N_{i,j}$ is the restriction of $-{\rm Ind}(D^{\rm lin}_{i,j})$, up to a shift of $\dim \bb E_{i,j} + 1$. So if we define
        \[ R_{i,j} := \Sigma^{\dim \bb E_{i,j} + 1} {\rm Th}(-{\rm Ind}(D^{\rm lin}_{i,j})), \]
        the bundle $N_{i,j}$ is naturally oriented in $R_{i,j}$. Further, it is sensible that there should exist multiplication maps $R_{i,u} \wedge R_{u,j} \to R_{i,j}$ given by linearized gluing for operators of the form \ref{eq:dlin}, and moreover that one can orient $\scr F$ relative to the $R_{i,j}$. The resulting CJS construction would then be a module over the ring spectrum $R_{\min I, \min I}$, by the same logic.
        
        Thanks to the Atiyah-Patodi-Singer index theorem, the various ${\rm Ind}(D_{i,j}^{\rm lin})$ are computable via some algebraic topology from the abstract symbol of the operator, so that in the end no analysis is involved in the definition of $R_{i,j}$. However, it seems unlikely that one would be able to define these $R_{i,j}$ as orthogonal spectra, with strictly associative product maps; rather, a probably more appropriate setting is that of $\infty$-categories, since the various gluing isomorphisms involve lots of contractible spaces of choices. To our knowledge, no truly $\bb A_\infty$-version of the APS index theorem has ever been proven. But speculatively, if $\{A_x\}_{x \in X}$ is considered as a map $\sigma : X \to \scr{Fred^{sa}}$, the space of self-adjoint Fredholm operators with infinitely many positive and negative eigenvalues, then given any basepoint $p \in X$ with $A_p$ non-degenerate, the ring spectrum $R_{p,p} := {\rm Th}(-{\rm Ind}(D^{\rm lin}_{p,p}))$ should be the Thom $\bb A_\infty$-ring spectrum ${\rm Th}(\Omega\sigma)$, where $\Omega\sigma$ is the stable bundle induced by
        \[ \Omega_p X \to \Omega \scr{Fred^{sa}} \overset\sim\to \scr{Fred}, \]
        where the latter equivalence is proved by Atiyah-Singer \cite{AS} as part of their proof of Bott periodicity ($\scr{Fred^{sa}}$ classifies $KO^1$.) By work of \cite{HM}, the data of a module over ${\rm Th}(\Omega\sigma)$ can more conveniently be expressed as a \emph{twisted spectrum} over $X$, i.e. a section of a certain bundle of stable $\infty$-categories over $X$, with fiber isomorphic to the $\infty$-category of spectra. Either way, this conjectural CJS construction we have proposed should recover any weaker version obtained via {\sc Ex.\,\ref{exp:bord}}, since orientations in bordism spectra are usually gotten by factoring the classifying map $\Omega\sigma$ through some stronger classifying space, e.g. $BSO, BU$, etc., thereby giving a map from ${\rm Th}(\Omega\sigma)$ to $R = MSO, MU$, etc. Then one can tensor the CJS construction over ${\rm Th(\Omega\sigma)}$ with $R$ to recover the result of {\sc Ex.\,\ref{exp:bord}}.
    \end{EXP}

    \bibliographystyle{abbrvurl} 
    \bibliography{MorseCJS}{} 

\end{document}